\newfont{\msam}{msam10}
\newfont{\msbm}{msbm10}
\def\articletheorems{
\newtheorem{thm}{Theorem}[section]

\newtheorem{prop}[thm]{Proposition}
\newtheorem{ex}[thm]{Example}

}
\newcommand{\id}{\operatorname{id}}
\newcommand{\cl}{\operatorname{cl}}
\newcommand{\Cl}{\operatorname{cl}}
\newcommand{\Int}{\operatorname{int}}
\newcommand{\bd}{\operatorname{bd}}
\newcommand{\dom}{\operatorname{dom}}
\newcommand{\diam}{\operatorname{diam}}
\newcommand{\dist}{\mbox{$\operatorname{dist}$}}
\newcommand{\im}{\operatorname{im}}
\renewcommand{\emptyset}{\varnothing}
\newcommand{\Inv}{\operatorname{Inv}}
\def\mathobj#1{\mbox{$#1$}}
\def\RR{\mathobj{\mathbb{R}}}
\def\ZZ{\mathobj{\mathbb{Z}}}
\def\cA{\text{$\mathcal A$}}
\def\cG{\text{$\mathcal G$}}
\def\setof#1{\mbox{$\{\,#1\,\}$}}
\newtheorem{lm}[thm]{Lemma}
\newtheorem{df}[thm]{Definition}
\newfont{\mbf}{msbm10 scaled 1100}
\newfont{\mmbf}{msbm10 scaled 800}
\def\R{\mbox{\mbf R}}
\def\Z{\mbox{\mbf Z}}
\def\N{\mbox{\mbf N}}
\def\z{\mbox{\mmbf Z}}
\def\for{\hspace{1ex} \mbox{ for }}
\newcommand{\Sol}{\operatorname{Sol}}
\newcommand{\mto}{\multimap}
\begin{document}
\title[]{Weak index pairs and the Conley index  for discrete multivalued dynamical systems. Part II: properties of the index}
\author{Bogdan Batko}
\date{}
\address{Bogdan Batko\\
Division of Computational Mathematics,
Faculty of Mathematics and Computer Science,
Jagiellonian University,
ul. St. Łojasiewicza 6,
30-348 Krak\'ow,
Poland
}
\email{bogdan.batko@ii.uj.edu.pl}
\date{}
\subjclass[2010]{primary 54H20, secondary 54C60, 34C35}
\thanks{
   This research is partially supported by the Polish National Science Center under Ma\-estro Grant No. 2014/14/A/ST1/00453.
}

\begin{abstract}
Motivation to revisit the Conley index theory for discrete multivalued dynamical systems stems from the needs of broader real applications, in particular in sampled dynamics or in combinatorial dynamics. The 
new construction of the index in [B. Batko and M. Mrozek, {\em SIAM J. Applied Dynamical Systems}, 15(2016), pp. 1143-1162] based on weak index pairs, under the circumstances of the absence of index pairs caused by relaxing the isolation property, seems to be a promising step towards this direction. The present paper is a direct continuation of [B. Batko and M. Mrozek, {\em SIAM J. Applied Dynamical Systems}, 15(2016), pp. 1143-1162] and concerns properties of the index defined therin, namely Ważewski property, the additivity property, the homotopy (continuation) property and the commutativity property. We also present the construction of weak index pairs in an isolating block.
\end{abstract}
\maketitle
\footnotetext{{\it Key words and phrases}. Discrete multivalued dynamical system, Isolated invariant set, Isolating neighborhood, Isolating block, Index pair, Weak index pair, Conley index, Homotopy (continuation) property, Additivity property, Commutativity property.}
\section{Introduction}
The Conley index as a topological invariant defined for isolated invariant sets has become an important tool in the study of qualitative features of dynamical systems. The original construction of the index by Conley and his students in \cite{C78} concerned flows on locally compact metric spaces and was further generalized to arbitrary metric spaces (cf. \cite{R87,B91}), multivalued flows (cf. \cite{M90a}), discrete dynamical systems (cf. \cite{RS88,M90,DM93,FR00}), as well as discrete multivalued dynamical systems (cf. \cite{KM95}).

The interest in multivalued dynamics which admits multiple forward solutions, originated in the qualitative analysis of differential equations without uniqueness of solutions
and differential inclusions \cite{AC84}.
 
It turns out that multivalued dynamics can also be fruitfully applied while studying single valued dynamical systems, particularly in the rigorous numerical analysis of differential equations and iterates of maps. A rigorous numerical experiment, according to its nature, results in a multivalued map which covers the underlying single valued one.  
One can expect some of the qualitative features of the single valued dynamical system and its multivalued, reasonably tight, enclosure to be common and can be analyzed via topological invariants such as the Conley index. Such an approach was presented for the first time in \cite{MiMr95} and since then applied to many concrete problems.

The existing Conley index theory for discrete multivalued dynamical systems, originated by T. Kaczynski and M. Mrozek in 1995 (cf. \cite{KM95}), uses quite demanding definition of an isolating neigbourhood. Namely,  a compact set $N$ is an isolating neighborhood of a multivalued map $F$ if
\begin{equation}
\label{eq:mv-s-iso}
\dist(\Inv N,\bd N)> \max\{\diam F(x)\mid x\in N\},
\end{equation}
where $\Inv N$ stands for the invariant part of $F$ in $N$ (see Definition~\ref{def:inv}).
A slightly weaker but essentially similar definition is presented in \cite{S06}. Physical experiments show that the above assertion is restrictive and in practice finding an isolating neighborhood in the sense of (\ref{eq:mv-s-iso}) is difficult (cf. e.g. \cite{MiMrReSz99,MiMrReSz99b}). This limits the applicability of the theory. 

Therefore, in \cite{BM2016} the definition of an isolating neighborhood has been significantly generalized. Namely, instead of (\ref{eq:mv-s-iso}) it is required that 
\begin{equation}
\label{eq:mv-iso}
\Inv N\subset\Int N,
\end{equation}
i.e. the same condition as for single-valued maps. Such an approach, however, causes that index pairs, being the main tool in the construction of the Conley index, are no longer useful, because isolating neighborhoods in the sense of (\ref{eq:mv-iso}) do not guarantee their existence. Therefore, the new construction of the index was needed. The definition of the Conley index presented in \cite{BM2016} is based on weak index pairs.   

This paper is a direct continuation of \cite{BM2016} and is devoted to the properties of the Conley index defined therin. We discuss intrinsic properties of Conley type indices, namely Ważewski property, the additivity property, the homotopy (continuation) property and the commutativity property. Moreover, we present a simple construction of a weak index pair in an isolating block.  

The theory we develop may be useful among others in sampled dynamics, i.e. in the reconstruction problem of the qualitative features of an unknown dynamical system on the basis of a finite amount of experimental data only. For wider discussion concerning this issue we refer to \cite{BM2016}. Promising results in this direction are presented in \cite{MiMrReSz99,MiMrReSz99b} and also in a recent paper \cite{EdJaMr15}. In fact, in \cite{MiMrReSz99,MiMrReSz99b} the single valued Conley index theory is used, and the multivalued enclosure $F$ of the sampled dynamical system $f$ is necessary to aid the construction of index pairs only. Such an approach, however, requires the single valued generator to be the selector of the multivalued one which, in general, is not automatically guaranteed by the technique of its construction. We only can expect that $f$ lies nearby $F$. Enlarging the values of $F$ to ensure the existence of a continuous selector may cause overestimation and, as a consequence, prevent the isolation. Therefore, we advocate the qualitative features of the underlying dynamics to be inferred directly from the multivalued dynamical system $F$ constructed from the experimental data. To achieve this, the continuation property of the Conley index introduced in \cite{BM2016} is needed.   

Another potential application of our work concerns combinatorial dynamics. The recent results presented in \cite{KMW14} establish
formal ties between the classical dynamics and the combinatorial dynamics in the sense of Forman \cite{Fo98b}. The definition of the Conley index presented in \cite{BM2016} may enable extending these ties towards the Conley index theory. 

The organization of the paper is as follows.
Section \ref{sec:prel} presents preliminaries needed in the paper. 
In Section \ref{sec:C_ind} we recall the definition of an isolating neighborhood and the construction of the Conley index based on weak index pairs, following \cite{BM2016}. 
Section \ref{sec:ib} presents the definition of an isolating block for the  discrete multivalued dynamical system. An isolating block allows an easy construction of a weak index pair (cf. Theorem \ref{thm:wip_in_ib}) which may be convenient from the computational point of view. 
In Section \ref{sec:w_a_prop} Ważewski and the additivity properties are discussed. It turns out that, unlike in the single valued case, or even in the multivalued case for strongly isolated invariant sets, the union of two disjoint isolated invariant sets does not need to be an isolated invariant set (cf. Example \ref{ex:sum_not_io}). Even more, if the union of two disjoint isolated invariant sets is an isolated invariant set, its Conley index is not necessarily equal to the product of its components (cf. Example \ref{ex:not_add}). The reason is that the definition of an isolating neighborhood does not fully control the image of an isolated invariant set under $F$. As a consequence the sum of two isolated invariant sets may give birth to new trajectories connecting the summands. However, if the isolated invariant sets are sufficiently well separated from one another, in the sense that the image of any of them does not intersect the other, then the desired conclusion follows (cf. Theorem \ref{thm:add}). 
In Section \ref{sec:hom_prop} we discuss the continuation (homotopy) property of the Conley index (cf. Theorem \ref{thm:homotopy}). We also present its straightforward application to the leading example of \cite{BM2016} showing that the Conley index of the multivalued dynamical system constructed by sampling the dynamics coincides with the index of the underlying single valued dynamical system, as desired (cf. Example \ref{ex:hom1} and Example \ref{ex:hom2}).
In the last section we deal with the commutativity property. It seems that this property of the Conley index for discrete multivalued dynamical systems is undertaken here for the first time. 
\section{Preliminaries}\label{sec:prel}
Throughout the paper the sets of all integers, non-negative integers, non-positive integers and real numbers are denoted by $\Z$, $\Z^+$, $\Z^-$ and $\R$, respectively. By an interval in $\Z$ we mean a trace of a closed real interval in $\Z$.

By $\cl_XA$, $\Int _XA$ and $\bd _XA$ we denote
the closure, the interior and the boundary of a subset $A$ of a given topological space $X$. We drop the subscript $X$ in this notation
if the space is clear from the context.

Let $\mathcal{P}(Y)$ stand for the set of all subsets of a given topological space $Y$.
By a multivalued map $F:X\mto Y$ we mean a function $X\ni x\mapsto F(x)\in\mathcal{P}(Y)$. For given multivalued mapping $F:X\mto Y$ and $B\subset Y$ we define sets
$$
F^{-1}(B):=\{x\in X\mid  F(x)\cap B \neq\emptyset\},
$$
and
$$
F_{-1}(B):=\{x\in X\mid  F(x)\subset B\},
$$
called {\em large counter image} and {\em small counter image} of $B$ under $F$, respectively.
$F$ is said to be {\em upper semicontinuous} ({\em usc} for short) if the large counter image of any closed $B$ in $Y$ is closed in $X$ or, equivalently, if a small counterimage of any open $U$ in $Y$ is open in $X$. 
Given $A\subset X$ we define its {\em image} under $F$ to be 
$$
F(A):=\bigcup\{F(x)\mid x\in A\}.
$$ 
By $\im F$ we denote the {\em image} of $F$, i.e. the set $F(X)$.
Recall that any usc mapping with compact values has a closed graph and it sends compact sets into compact sets. If $F:X\mto Y$ is usc then its {\em effective domain}, i.e. the set $\dom (F):=\{x\in X\mid  F(x)\neq\emptyset\}$, is closed.

The {\em inverse} of a multivalued map $F:X\mto Y$ is a multivalued map $F^{-1}:Y\mto X$ defined by 
$$
x\in F^{-1}(y)\mbox{ if and only if }y\in F(x).
$$ 

For $F:X\mto Y$ and $G:Y\mto Z$ one defines the {\em composition} $G\circ F:X\mto Z$ by
$$
(G\circ F)(x):=\bigcup\{G(y)\mid y\in F(x)\}\for x\in X.
$$
Finally, if $F:X\mto X$ then by $F^k$, for $k\in\Z^+\setminus\{0\}$, we understand the  composition of $k$ copies of $F$, if $k$ is positive, or $-k$ copies of the inverse $F^{-1}$ of $F$, if $k$ is negative.
\begin{df}
{\rm (cf. \cite[Definition 2.1]{KM95}). An usc mapping $F:X\times\Z\mto X$ with compact values is called a {\em discrete multivalued dynamical system} ({\em dmds}) if the following conditions are satisfied:
\begin{itemize}
\item[(i)] for all $x\in X$, $F(x,0)=\{x\}$,
\item[(ii)] for all $n,m\in\Z$ with $nm\geq 0$ and all $x\in X$, $F(F(x,n),m)=F(x,n+m)$,
\item[(iii)] for all $x,y\in X$, $y\in F(x,-1)\Leftrightarrow x\in F(y,1)$.
\end{itemize}
}
\end{df}
Since $F^n(x):=F(x,n)$ coincides with a composition of $F^1:X\mto X$ or its inverse $(F^1)^{-1}$, we call $F^1$ the {\em generator} of the dmds $F$. For simplicity we denote the generator by $F$ and identify it with the dmds.

From now on we assume that $F$ is a given dmds.

\begin{df}
{\rm (cf. \cite[Definition 2.3]{KM95}). Let $I\subset\Z$ be an interval containing $0$. A single valued mapping $\sigma:I\to X$ is called a {\em solution for $F$ through $x\in X$} if $\sigma (0)=x$ and $\sigma (n+1)\in F(\sigma(n))$ for all $n,n+1\in I$.
}
\end{df}
\begin{df}
\label{def:inv}
{\rm Given $N\subset X$ we define the following sets
\begin{eqnarray*}
\Inv^+N&:=&\{x\in N\mid \exists\,\sigma:\Z^+\to N\mbox{ a solution  for }F\mbox{ through }x\},\\
\Inv^-N&:=&\{x\in N\mid \exists\,\sigma:\Z^-\to N\mbox{ a solution  for }F\mbox{ through }x\},\\
\Inv N&:=&\{x\in N\mid  \exists\,\sigma:\Z\to N\mbox{ a solution  for }F\mbox{ through }x\},
\end{eqnarray*}
called the {\em positive invariant part, negative invariant part} and the {\em invariant part of} $N$, respectively.
}
\end{df}
 Note that, by (i), $\Inv N=\Inv ^+N\cap \Inv ^-N$.

We will frequently consider pairs of topological spaces. For the sake of simplicity we will denote such pairs by single capital letters and then the first or the second element of the pair will be denoted by adding to the letter the subscript $1$ or $2$, respectively. In other words, if $P$ is a pair of spaces then $P=(P_1,P_2)$ where $P_1$, $P_2$ are topological spaces. Consequently, the rule extends to any relation $R$ between pairs $P$ and $Q$, i.e. any statement that pairs $P$ and $Q$ are in a relation $R$ will mean that $P_i$ is in a relation $R$ with $Q_i$ for $i=1,2$. According to our general assumption, whenever we say that $F$ is a map of pairs $P$ and $Q$ it means that $F$ maps $P_i$ into $Q_i$ for $i=1,2$.

Although most of the considerations in this paper are valid for locally compact topological spaces satisfying some separation axioms, usually locally compact Hausdorff spaces, for the sake of simplicity we make a general assumption that, until explicitely stated otherwise, by a space we mean a locally compact metrizable space. 

\section{Definition of the Conley index}\label{sec:C_ind}
In this section, following \cite{BM2016}, we summarize briefly the
important definitions related to isolating neighborhoods, isolated invariant sets, weak index pairs and the Conley index.

Assume $F:X\mto X$ is a given discrete multivalued dynamical system on a locally compact metrizable space. Recall that in \cite{KM95}
an isolating neighborhood $N$ in a locally compact metric space was defined as a compact set satisfying
\begin{equation}\label{inKM}
\dist (\Inv N,\bd N)>\max \{\diam F(x)\mid x\in N\}.
\end{equation}
Slightly relaxed but essentially similar condition
\begin{equation}\label{inKS}
\Inv N\cup F(\Inv N)\subset\Int N
\end{equation}
is used in \cite{S06}. In \cite{BM2016} the definition of an isolating neighborhood has been significantly generalized and, to avoid the misunderstanding, isolating neighborhoods in the sense of \cite{KM95} or \cite{S06} have been named {\em strongly isolating neighborhoods}. We follow this convention.  
\begin{df}{\rm (cf. \cite[Definition 4.1, Definition 4.2]{BM2016}).\label{def:iso_ne}
A compact subset $N\subset X$ is an {\em isolating neighborhood} for $F$ if $\Inv N\subset \Int N$. A compact set $S\subset X$ is said to be {\em invariant} with respect to $F$ if $S=\Inv S$. It is called an {\em isolated invariant set} if it admits an isolating neighborhood $N$ for $F$ such that $S=\Int N$. If in the above assertion $N$ is a strongly isolating neighborhood then we call $S$ a {\em strongly isolated invariant set}.
}
\end{df}
The main tool in constructing the Conley index for flows, discrete dynamical systems as well as for discrete multivalued dynamical systems is an index pair. But, invariant sets isolated in the sense of Definition \ref{def:iso_ne} do not necessarily guarantee the existence of index pairs.
Therefore, in \cite{BM2016} weak index pairs are used. 

We define an $F$-{\em boundary} of a given set $A\subset X$ by
$$
\bd _F(A):=\cl A\cap\cl (F(A)\setminus A).
$$
\begin{df}\label{def:wip}
{\rm A pair $P=(P_1,P_2)$ of compact sets $P_2\subset P_1\subset N$ is called a {\em weak index pair} in $N$ if
\begin{itemize}
\item[(a)] $F(P_i)\cap N\subset P_i$ for $i\in\{1,2\}$,
\item[(b)] $\bd _FP_1\subset P_2$,
\item[(c)] $\Inv N\subset\Int (P_1\setminus P_2)$,
\item[(d)] $P_1\setminus P_2\subset\Int N$.
\end{itemize}
}
\end{df}
One can prove that any isolating neighborhood admits a weak index pair (cf. \cite[Theorem 4.12]{BM2016}). The construction of the Conley index is as follows.

We need the generator $F:X\mto X$ of the dmds, restricted to appropriate pairs of sets, to induce a homomorphism in cohomology. Therefore, we restrict ourselves to the class of maps {\em determined by a given morphism} (for the details see \cite{G83}, \cite{G76} or \cite{M90}). Let us recall that, in particular, any single-valued continuous map as well as any composition of acyclic maps (i.e. usc maps with compact acyclic values) belongs to this class.

For a weak index pair $P$ in an isolating neighborhood $N$ we let
\begin{equation}\label{eq:tp}
T(P):=T_N(P):=(P_1\cup (X\setminus \Int N), P_2\cup (X\setminus \Int N)).
\end{equation}
\begin{lm}{\rm (cf. \cite[Lemma 5.1]{BM2016})}\label{l1}
If $P$ is a weak index pair for $F$ in $N$ then
\begin{itemize}
\item[(i)] $F(P)\subset T(P)$,
\item[(ii)] the inclusion $i_{P,T(P)}:P\to T(P)$ induces an isomorphism in the Alexander-Spanier cohomology.
\end{itemize}
\end{lm}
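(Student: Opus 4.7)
Put $Z := X\setminus\Int N$, which is closed in $X$ because $\Int N$ is open.

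For \emph{part (i)} I will argue directly from condition~(a) of Definition~\ref{def:wip}. Fix $i\in\{1,2\}$ and $y\in F(P_i)$. Either $y\in N$, in which case $y\in F(P_i)\cap N\subset P_i$ by~(a); or $y\notin N$, in which case $y\in X\setminus N\subset Z$. In either case $y\in P_i\cup Z=T(P)_i$, so $F(P)\subset T(P)$.

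For \emph{part (ii)} my plan is to realize $i_{P,T(P)}$ as an excision of a locally closed subset of the pair $T(P)$ and then invoke the strong excision property of Alexander--Spanier cohomology. The first step is a purely set-theoretic consequence of condition~(d): every $x\in P_1\setminus P_2$ lies in $\Int N$, hence
\[
P_1\cap Z\subset P_2.
\]
From this I can read off the two identities
\[
(P_1\cup Z)\setminus(P_2\cup Z)=P_1\setminus P_2,\qquad (P_i\cup Z)\setminus(Z\setminus P_2)=P_i,\ \ i=1,2,
\]
which say that the inclusion $i_{P,T(P)}$ is precisely the excision from $T(P)$ of the locally closed set $U:=Z\setminus P_2$, and is the identity on the common relative difference $P_1\setminus P_2$.

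The second step is to invoke strong excision (equivalently, tautness) of Alexander--Spanier cohomology for closed pairs in the locally compact metrizable ambient space $X$. The point to verify is that the closure of $U$ inside $P_1\cup Z$ is contained in the interior of $P_2\cup Z$; here condition~(d) enters again, since $P_1\cap Z\subset P_2$ prevents any limit point of $U=Z\setminus P_2$ from escaping $P_2$ into the ``interior piece'' $P_1\setminus P_2$. Once this is pinned down, strong excision gives the isomorphism $i_{P,T(P)}^*:H^*(T(P))\to H^*(P)$.

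\emph{Main obstacle.} The delicate part will be handling the possibly non-compact enlargement $P_1\cup Z$ and verifying the closure/interior condition required for excision. Condition~(d) is exactly what makes this go through, because it forces the boundary of $U$ to sit entirely inside $P_2\cup Z$; everything else is a standard application of the Alexander--Spanier tautness machinery.
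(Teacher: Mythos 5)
Part (i) is correct and is exactly the intended argument: condition (a) handles the points of $F(P_i)$ that land in $N$, and everything else lies in $X\setminus N\subset X\setminus\Int N$.

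For part (ii) your skeleton is right, but the verification step you announce is the wrong one, and it would fail. You correctly derive from condition (d) that $P_1\cap Z\subset P_2$ and hence that $T(P)_1\setminus T(P)_2=P_1\setminus P_2$, so that the inclusion $i_{P,T(P)}$ restricts to the identity between the two relative differences. That identity of differences is already the \emph{entire} hypothesis of the strong excision property of Alexander--Spanier cohomology for closed pairs in a paracompact (here locally compact metrizable) space: a closed map of closed pairs that restricts to a homeomorphism of the differences induces an isomorphism (Spanier, Theorem 6.6.5; this is the form used throughout the Conley index literature and in \cite[Lemma 5.1]{BM2016}). Your plan, however, proposes instead to check that $\cl_{T(P)_1}\!\left(Z\setminus P_2\right)\subset\Int_{T(P)_1}\!\left(P_2\cup Z\right)$, which is the hypothesis of \emph{ordinary} excision, and this condition is false for typical weak index pairs. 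Indeed, a point $x\in P_2\cap\bd N$ in the closure of $P_1\setminus P_2$ (the generic situation: the exit set $P_2$ sits on $\bd N$ and $P_1\setminus P_2$ accumulates on it, as in the construction of Theorem \ref{thm:wip_in_ib}, where $P_2=F(N)\cap\bd N$) lies in $\cl(Z\setminus P_2)$ but every neighborhood of $x$ in $T(P)_1$ meets $P_1\setminus P_2\subset\Int N$, which is disjoint from $P_2\cup Z$; so $x\notin\Int_{T(P)_1}(P_2\cup Z)$. Nothing in Definition \ref{def:wip} forces the separation you claim condition (d) provides. The repair is simply to drop that step and cite strong excision in its closed-pair form: the hypothesis it actually needs is the equality $T(P)_1\setminus T(P)_2=P_1\setminus P_2$, which you have already proved.
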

Denote by $F_P:=F_{P,T(P)}$ the restriction of $F$ to the domain $P$ and codomain $T(P)$, and $i_P:=i_{P,T(P)}$. 
\begin{df}\label{df:ip}{\rm (cf. \cite[Definition 6.2]{BM2016})
The endomorphism $H^*(F_P)\circ H^*(i_P)^{-1}$ of $H^*(P)$ is called the {\em index map} associated with the index pair $P$ and denoted by $I_P$.
}
\end{df}
Several authors have used various concepts to construct the indices of Conley type: homotopy type (cf. \cite{RS88}), Leray functor (cf. \cite{M90}) and other normal functors (cf. \cite{M91,M94}), category of objects equipped with a morphism (cf. \cite{S95}) or shift equivalence (cf. \cite{FR00}). 

In our approach we apply the Leray functor $L$ to the pair $(H^*(P),I_P)$. Recall that the existence of a weak index pair $P$ in $N$ is guaranteed by \cite[Theorem 4.12]{BM2016}. We obtain a graded module over $\Z$ together with its endomorphism, called the {\em Leray reduction of the Alexander-Spanier cohomology} of $P$, which is independent of the choice of an isolating neighborhood $N$ for $S$ and of a weak index pair $P$ in $N$ (cf. \cite[Theorem 5.5]{BM2016}). Its common value is used to define the Conley index of $S$.
\begin{df}{\rm (cf. \cite[Definition 6.3]{BM2016})\label{def:con}
The module $L(H^*(P),I_P)$ is called the {\em cohomological Conley index of $S$} and denoted by $C(S,F)$, or simply  by $C(S)$ if $F$ is clear from the context.}
\end{df}
\section{Weak index pairs in isolating blocks}\label{sec:ib}
In this section we adapt the well known notion of  an {isolating block} to the case of discrete multivalued dynamical systems (see Definition \ref{df:ib}). As one can expect, any isolating block is an isolating neighborhood; hence it admits a weak index pair. The goal of this section is to show that an isolating block admits easier construction of a weak index pair than in general, i.e. in an arbitrary isolating neighborhood, which may be convenient in applications.

In this section $X$ is a locally compact Hausdorff space and $F:X\mto X$ is a discrete multivalued dynamical system.
\begin{df}\label{df:ib}
{\rm We say that a compact set $N$ is an {\em isolating block} with respect to $F$, if 
$$
N\cap F(N)\cap F^{-1}(N)\subset \Int N,
$$
where $F^{-1}(N)$ stands for the large counterimage of $N$ under $F$.
}
\end{df}
The following proposition is straightforward.
\begin{prop}
Any isolating block is an isolating neighborhood. The converse is not true.
\end{prop}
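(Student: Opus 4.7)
The plan is to reduce the first assertion to the set-theoretic inclusion $\Inv N\subset N\cap F(N)\cap F^{-1}(N)$, after which the block condition from Definition \ref{df:ib} finishes immediately, and then to settle the converse with a single explicit example.

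For the inclusion, I take $x\in\Inv N$ and, by Definition~\ref{def:inv}, fix a solution $\sigma\colon\Z\to N$ with $\sigma(0)=x$. Trivially $x\in N$. Reading the defining property $\sigma(n+1)\in F(\sigma(n))$ at $n=-1$ gives $x=\sigma(0)\in F(\sigma(-1))\subset F(N)$, and reading it at $n=0$ gives $\sigma(1)\in F(x)\cap N$, so $F(x)\cap N\neq\emptyset$, i.e. $x\in F^{-1}(N)$. Hence $x\in N\cap F(N)\cap F^{-1}(N)\subset\Int N$ by the block assumption, which is exactly what is needed to conclude that $N$ is an isolating neighborhood.

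For the converse, I would exhibit an isolating neighborhood that fails the block condition. Let $X=\R^2$ and let $F$ be the dmds generated by the translation $F(x,y):=(x+1,y)$, and take $N:=[0,10]\times[0,1]$. Every forward orbit $\{(a+n,b)\}_{n\in\Z^+}$ leaves $N$ after finitely many steps, so $\Inv N=\emptyset\subset\Int N$ and $N$ is an isolating neighborhood. However, a direct computation yields $N\cap F(N)\cap F^{-1}(N)=[1,9]\times[0,1]$, and this set meets $\bd N$ along the segment $[1,9]\times\{0\}$, so the block condition fails.

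The positive direction is essentially unwinding definitions; the only mild subtlety worth flagging is that the image $F(N)$ and the counterimage $F^{-1}(N)$ are each witnessed by a \emph{different} neighboring step of $\sigma$ (namely $\sigma(-1)$ and $\sigma(1)$), so one-sided invariance $\Inv^+N$ or $\Inv^-N$ alone would not suffice---full bilateral invariance is really used. The counterexample then shows this bilateral information is strictly weaker than the block condition, which tests arbitrary boundary points via a single-step criterion without requiring any full orbit through them.
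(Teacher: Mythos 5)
Your argument is correct. The positive direction coincides with the paper's: the paper simply asserts that $\Inv N\subset N\cap F(N)\cap F^{-1}(N)$ is "straightforward," and your unwinding of it --- $x=\sigma(0)\in F(\sigma(-1))\subset F(N)$ and $\sigma(1)\in F(x)\cap N$ so $x\in F^{-1}(N)$ in the large-counterimage sense --- is exactly the intended verification, including the observation that both a backward and a forward step of the solution are needed. For the converse the paper instead invokes its Example 4.3, a genuinely multivalued map on $[0,6]$ in which the set $N\cap F(N)\cap F^{-1}(N)$ spills onto $\bd N$ because of the large values of $F$ at interior points; your counterexample is a single-valued translation of the plane with $\Inv N=\emptyset$. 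Yours is simpler to verify and makes the additional point that the gap between "isolating neighborhood" and "isolating block" is not a multivalued phenomenon at all --- it is already present for homeomorphisms --- whereas the paper's example stays within the one-dimensional multivalued setting used throughout its other examples. Both settle the converse; just note that your example lives in a noncompact ambient space $\R^2$, which is fine since the section only assumes $X$ locally compact Hausdorff.
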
    
\begin{proof}
The first statement is a straightforward consequence of the property $\Inv N\subset N\cap F(N)\cap F^{-1}(N)$. The second follows from Example \ref{ex:in_not_ib}.
\end{proof}
\begin{ex}\label{ex:in_not_ib}
{\rm
Consider $X=[0,6]$ and $F:X\mto X$ given by
\begin{equation}\label{eq:f_ib}
F(x):=\left\{
\begin{array}{rl}
\{0\}&\for x\in [0,1)\\
{[0,1]}&\for x=1\\
\{1\}&\for x\in (1,3)\\
{[1,5]}&\for x=3\\
{[3,5]}&\for x\in (3,4)\\
{[2,5]}&\for x=4\\
\{2\}&\for x\in (4,6].
\end{array}
\right.
\end{equation}
The graph of $F$ is presented in Figure \ref{fig:ib}. 
\begin{figure}[ht]
\begin{center}
\resizebox{80mm}{!}
{
\includegraphics{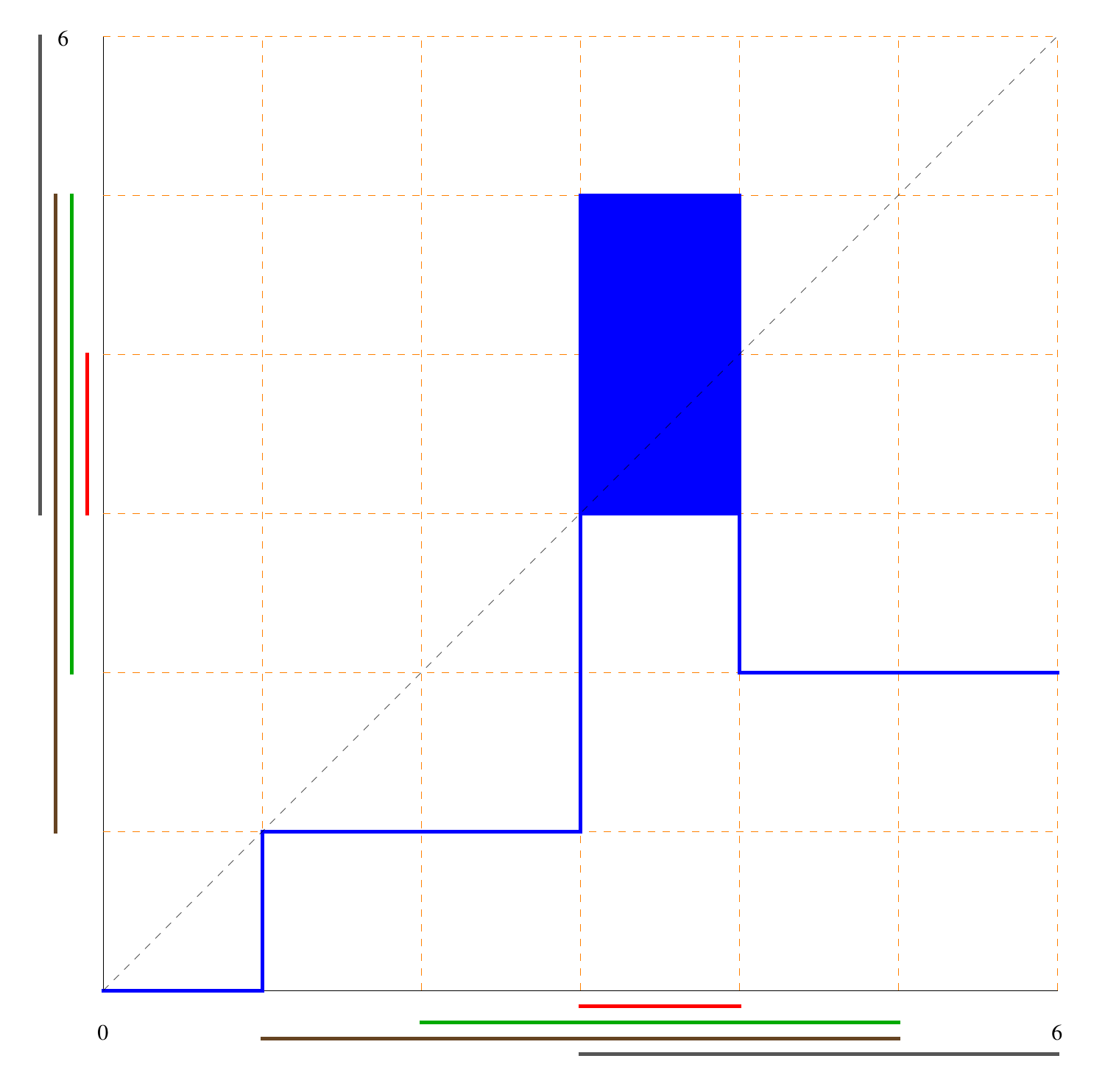}}
\caption{The graph of the map $F$ given by \eqref{eq:f_ib} is marked in blue.
Isolated invariant set $S$ and isolating neighborhood $N$ are marked in red and green, respectively.
The image $F(N)$ and the counterimage $F^{-1}(N)$, showing that $N$ is not an isolating block, 
are marked in brown and gray, respectively.
}
\label{fig:ib}
\end{center}
\end{figure}
It is easy to see that $S=[3,4]$ is an isolated invariant set with respect to $F$ and $N=[2,5]$ isolates $S$, whereas $N$ is not an isolating block.
}
\end{ex}
\begin{thm}\label{thm:wip_in_ib}
Let $N$ be an isolating block with respect to $F$ and let $U$ be an open neighborhood of $F(N)\cap F^{-1}(N)\cap N$ with $\cl U\subset\Int N$. Then the sets $P_1:=(F(N)\cap N)\cup \cl U$ and $P_2:=F(N)\cap\bd N$ give raise to a weak index pair $P:=(P_1,P_2)$ in $N$.
\end{thm}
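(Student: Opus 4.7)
The plan is to verify each of the four defining conditions (a)--(d) of Definition \ref{def:wip} directly for the concrete pair $P=(P_1,P_2)$ given in the statement. First I would record the basic set-theoretic facts: $F(N)$ is compact because $F$ is usc with compact values, so both $P_1$ and $P_2$ are compact; and $P_2\subset P_1\subset N$ is immediate since $\bd N\subset N$ while $\cl U\subset\Int N\subset N$.

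For the forward-invariance condition (a), the case $i=1$ is automatic: if $x\in P_1\subset N$ and $y\in F(x)\cap N$, then $y\in F(N)\cap N\subset P_1$. The case $i=2$ is where the isolating-block hypothesis does its work. If some $x\in P_2 = F(N)\cap\bd N$ admitted a point $y\in F(x)\cap N$, then $x\in F^{-1}(N)$ by definition of the large counterimage, so $x\in N\cap F(N)\cap F^{-1}(N)\subset\Int N$, contradicting $x\in\bd N$; hence $F(P_2)\cap N=\emptyset$ and (a) holds trivially.

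Conditions (c) and (d) follow from the way $U$ was chosen. For (d), I would write $(F(N)\cap N)\setminus P_2 \subset F(N)\cap\Int N$ and observe $\cl U\subset\Int N$, so the whole $P_1\setminus P_2$ sits inside $\Int N$. For (c), the key observation is that $\Inv N\subset N\cap F(N)\cap F^{-1}(N)$ (any point carrying a full solution in $N$ lies in both $F(N)$ and $F^{-1}(N)$), and by hypothesis this set lies in the open neighborhood $U$. Since $\cl U\subset\Int N$ we have $U\cap\bd N=\emptyset$, hence $U\cap P_2=\emptyset$; combined with $U\subset\cl U\subset P_1$, this exhibits $U$ as an open subset of $P_1\setminus P_2$ containing $\Inv N$.

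The main obstacle is condition (b), $\bd_F P_1\subset P_2$. Take $x\in\bd_F P_1 = P_1\cap\cl(F(P_1)\setminus P_1)$. Since $P_1$ is compact and $F$ is usc with compact values, $F(P_1)$ is compact, hence closed, so $x\in F(P_1)\subset F(N)$. It remains to check $x\in\bd N$. I would argue by contradiction: if $x\in\Int N$, then a sequence $y_n\to x$ with $y_n\in F(P_1)\setminus P_1$ would eventually lie in $\Int N$, so $y_n\in F(N)\cap N\subset P_1$, contradicting $y_n\notin P_1$. Hence $x\in F(N)\cap\bd N=P_2$. This closedness argument is the only step that requires care, and once the earlier inclusions are in place it goes through cleanly without any further use of the isolating-block condition beyond what was already exploited in (a).
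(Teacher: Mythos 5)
Your proposal is correct and follows essentially the same route as the paper's proof: (a) for $P_2$ via the isolating-block condition forcing a boundary point into $\Int N$, (c) and (d) from the choice of $U$, and (b) from the observation that points of $F(P_1)$ lying in $N$ already belong to $P_1$, so limits of $F(P_1)\setminus P_1$ cannot lie in $\Int N$. The only cosmetic differences are that you prove the slightly stronger fact $F(P_2)\cap N=\emptyset$ and extract $x\in F(N)$ in (b) from the closedness of $F(P_1)$ rather than arguing via property (d); both variants are fine.
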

\begin{proof}
Clearly, $P_1$ and $P_2$ are compact, and $P_2\subset P_1\subset N$. 

The positive invariance of $P_1$ in $N$ is obvious. Thus, in order to prove property (a) of $P$, we shall verify that $F(P_2)\cap N\subset P_2$. Suppose the contrary and take $x\in F(P_2)\cap N\setminus P_2$. Clearly, $F(P_2)\cap N
\subset F(N)\cap N$, hence $x\in \Int N$. Since $x\in F(F(N)\cap\bd N)$, there exists $u\in F(N)\cap\bd N$ such that $x\in F(u)$. This yields $u\in \bd N\cap F(N)\cap F^{-1}(N)$, a contradiction. 

We shall verify condition (d). Observe that $P_1\setminus P_2=(F(N)\cap N\cup \cl U)\setminus (F(N)\cap \bd N)\subset F(N)\cap\Int N\cup\cl U$. This, along with $\cl U\subset\Int N$, yields $P_1\setminus P_2\subset \Int N$.    

Clearly, $U\subset P_1$ and $U\cap P_2=\emptyset$, as $P_2\subset\bd N$ and $U\subset\Int N$. Consequently, $U\subset P_1\setminus P_2$. Therefore, $\Inv N\subset F(N)\cap F^{-1}(N)\cap N\subset U\subset\Int (P_1\setminus P_2)$, which means that condition (c) holds.

It remains to verify property (b). Suppose the contrary and consider $x\in\bd _F(P_1)\setminus P_2$. Then $x\in P_1\setminus P_2$ and, by property (d), we have $x\in\Int N$. However, $x\in \bd_F(P_1)\subset \cl (F(P_1)\setminus P_1)$, thus we can take a sequence  $\{x_n\}\subset F(P_1)\setminus P_1$ convergent to $x$. Observe that $F(P_1)\setminus P_1\subset F(N)\setminus N$; hence $x_n\notin N$ for all $n\in\N$, which contradicts $x\in\Int N$.      
\qed\end{proof}
Observe that the assertion that $P_1$ contains a compact neighborhood of $F(N)\cap F^{-1}(N)\cap N$ is necessary.
\begin{ex}
{\rm Let $X=[0,7]$. Consider $F:X\mto X$ given by
\begin{equation}\label{eq:f_ib2}
F(x)=\left\{
\begin{array}{rl}
\{0\}&\for x\in [0,1)\\
{[0,3]}&\for x=1\\
\{3\}&\for x\in (1,3)\\
{[3,4]}&\for x\in [3,4]\\
\{3\}&\for x\in (4,6)\\
{[0,3]}&\for x=6\\
\{0\}&\for x\in (6,7].
\end{array}
\right.
\end{equation}
The graph of $F$ is presented in Figure \ref{fig:ib2}. 
\begin{figure}[ht]
\begin{center}
\resizebox{80mm}{!}
{
\includegraphics{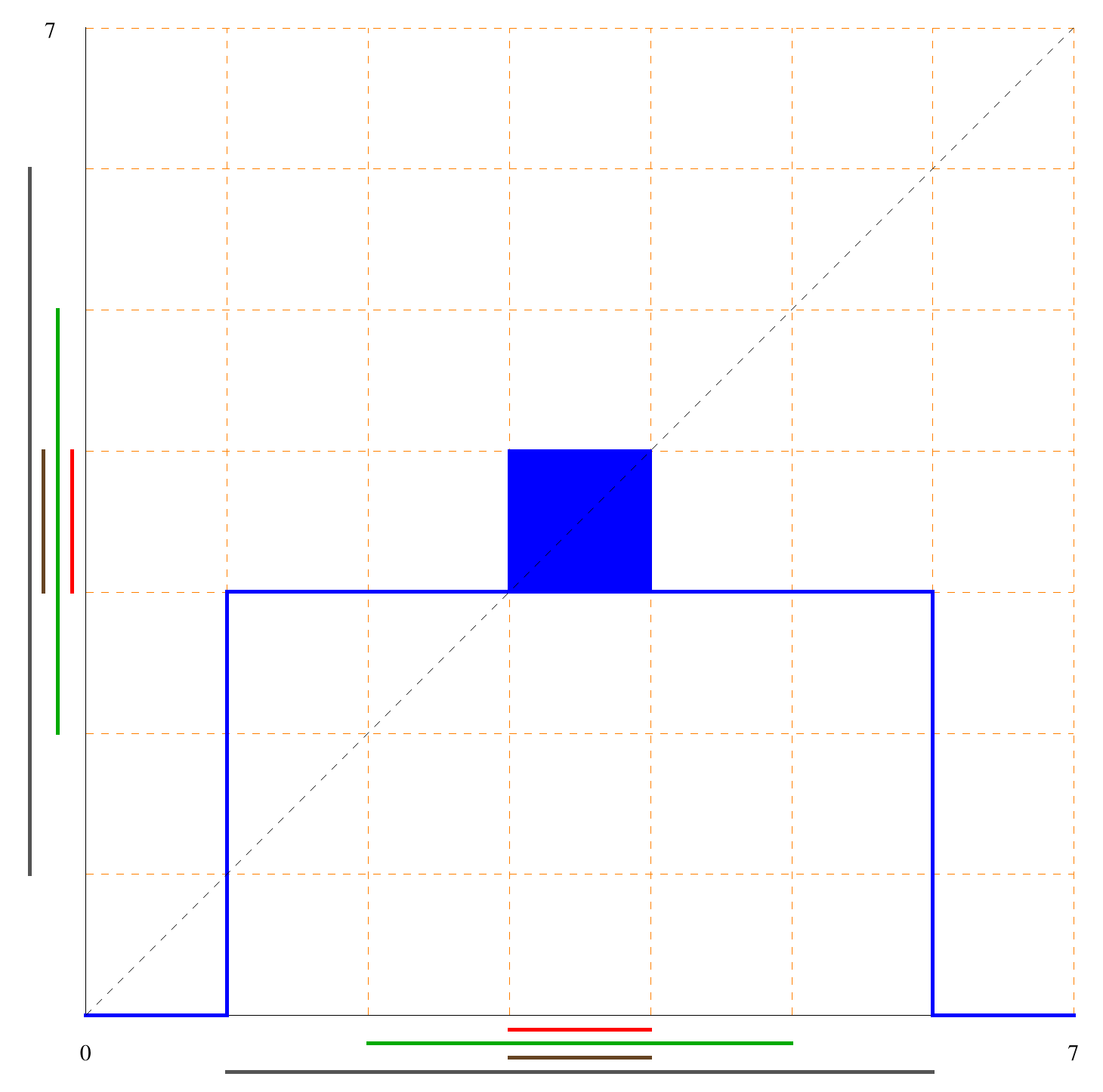}}
\caption{The graph of the map $F$ given by \eqref{eq:f_ib2} is marked in blue.
The isolating block $N$ and $\Inv N$ for $F$ are marked in green and red, respectively.
The image $F(N)$ and the counterimage $F^{-1}(N)$, showing that $\Inv N=F(N)\cap F^{-1}(N)\cap N$, are marked in brown and gray, respectively.
}
\label{fig:ib2}
\end{center}
\end{figure}
It is easy to see that $N=[2,5]$ is an isolating block with respect to $F$. Moreover, $\Inv N=F(N)\cap F^{-1}(N)\cap N$. If $P$ is a weak index pair with respect to $F$, then by property (c) of $P$, we have $F(N)\cap F^{-1}(N)\cap N\subset \Int P_1$. 
}
\end{ex}
To conclude this section let us emphasize that once an isolating block $N$ for a dmds given by a combinatorial multivalued map on a cubical grid is localized, the algorithmic construction of a weak index pair is straightforward. This is because a compact neighborhood of $F(N)\cap F^{-1}(N)\cap N$ required in Theorem \ref{thm:wip_in_ib} may be obtained by subdividing the grid, if necessary.
\section{Ważewski and additivity properties of the Conley index}\label{sec:w_a_prop}
Throughout this section we assume that $X$ is a locally compact metrizable space and $F:X\mto X$ is a discrete multivalued dynamical system.
 
\subsection{Ważewski property}
\begin{thm}
{\rm (}{\bf Ważewski property}{\rm )}
Let $S$ be an isolated invariant set with respect to $F$. If $C(S,F)\neq 0$, then $S\neq\emptyset$.
\end{thm}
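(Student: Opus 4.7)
The plan is to prove the contrapositive: assuming the isolated invariant set $S$ is empty, I will exhibit a weak index pair whose Leray reduction is the zero module, which forces $C(S,F)=0$.

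First, since $S$ is an isolated invariant set, it admits an isolating neighborhood $N$ with $\Inv N = S = \emptyset$. The natural candidate for a weak index pair in $N$ is the trivial pair $P := (\emptyset,\emptyset)$. I would verify directly that this satisfies all four conditions of Definition~\ref{def:wip}: compactness and the inclusion $\emptyset \subset \emptyset \subset N$ are immediate; condition (a) reduces to $F(\emptyset)\cap N = \emptyset \subset \emptyset$; condition (b) reduces to $\bd_F(\emptyset)=\cl\emptyset\cap\cl(F(\emptyset)\setminus\emptyset)=\emptyset$; condition (c) becomes $\Inv N \subset \Int\emptyset = \emptyset$, which uses precisely the hypothesis $\Inv N = \emptyset$; and condition (d) is $\emptyset \subset \Int N$, which is vacuous. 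Thus the empty pair is genuinely a weak index pair in $N$.

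Next, I compute the Conley index from this pair following Definition~\ref{df:ip} and Definition~\ref{def:con}. The Alexander--Spanier cohomology $H^{*}(P) = H^{*}(\emptyset,\emptyset)$ vanishes in all degrees, so the induced endomorphism $I_P = H^{*}(F_P)\circ H^{*}(i_P)^{-1}$ is the zero endomorphism of the zero module. Applying the Leray functor $L$ to the pair $(0,0)$ yields $0$, and by the invariance statement recalled from \cite[Theorem 5.5]{BM2016} this common value is $C(S,F)$. Hence $C(S,F)=0$, contradicting the hypothesis, and therefore $S$ must be nonempty.

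I do not anticipate a substantial technical obstacle; the only subtlety is checking that an empty pair qualifies as a weak index pair under Definition~\ref{def:wip} (in particular that condition (c) is compatible with $P_1\setminus P_2 = \emptyset$ precisely when $\Inv N=\emptyset$). If one wished to avoid invoking the empty pair explicitly, an alternative would be to take any weak index pair $P$ guaranteed by \cite[Theorem 4.12]{BM2016} and argue that $P_1\setminus P_2$ must be empty when $\Inv N=\emptyset$ by exploiting conditions (a) and (c), together with the fact that any point of $P_1\setminus P_2$ would generate a full solution in $N$ through standard trapping arguments; this would again force $H^{*}(P)=0$ after the Leray reduction. Either route gives the conclusion.
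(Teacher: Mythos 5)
Your proposal is correct and follows essentially the same route as the paper: the paper also argues by contradiction, takes $(\emptyset,\emptyset)$ as a weak index pair for $S=\emptyset$, notes $H^*(\emptyset,\emptyset)=0$ and $L(0)=0$, and invokes the independence of the index from the choice of isolating neighborhood and weak index pair. Your explicit verification of conditions (a)--(d) for the empty pair merely fills in what the paper dismisses with ``clearly.''
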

\begin{proof}
The argument is by contradiction. Clearly $(\emptyset,\emptyset)$ is a weak index pair for $S=\emptyset$. Moreover, $H^*(\emptyset,\emptyset)=0$ and, by \cite[Proposition 4.6]{M90}, $L(0)=0$. Since the index is independent of the choice of an isolating neighborhood and a weak index pair (cf. \cite[Theorem 6.4]{BM2016}), we have $C(\emptyset,F)=0$.
\qed\end{proof}
\subsection{Additivity property}
Let us begin this section with an observation that, unlike in the case of single valued dynamical systems, or discrete multivalued dynamical systems and strongly isolated invariant sets, 
the union of two disjoint isolated invariant sets need not be an isolated invariant set.
\begin{ex}\label{ex:sum_not_io}
{\em Let 
$$
G(x):=\left\{
\begin{array}{rl}
\{0\}&\for x\in [0,1)\\
{[-3,0]}&\for x=1\\
\{-3\}&\for x\in (1,3)\\
{[-3,4]}&\for x=3\\
{[3,4]}&\for x\in (3,4)\\
{[3,6]}&\for x=4\\
\{6\}&\for x\in (4,7].
\end{array}
\right.
$$
Consider $X=[-7,7]$ and the dmds $F:X\mto X$ given by
\begin{equation}\label{eq:f_add1}
F(x):=\left\{\begin{array}{rl}
G(x)&\for x\in[0,7]\\
-G(-x)&\for x\in[-7,0).
\end{array}\right.
\end{equation}
The graph of $F$ is presented in Figure \ref{fig:add1}. 
\begin{figure}[ht]
\begin{center}
\resizebox{80mm}{!}
{
\includegraphics{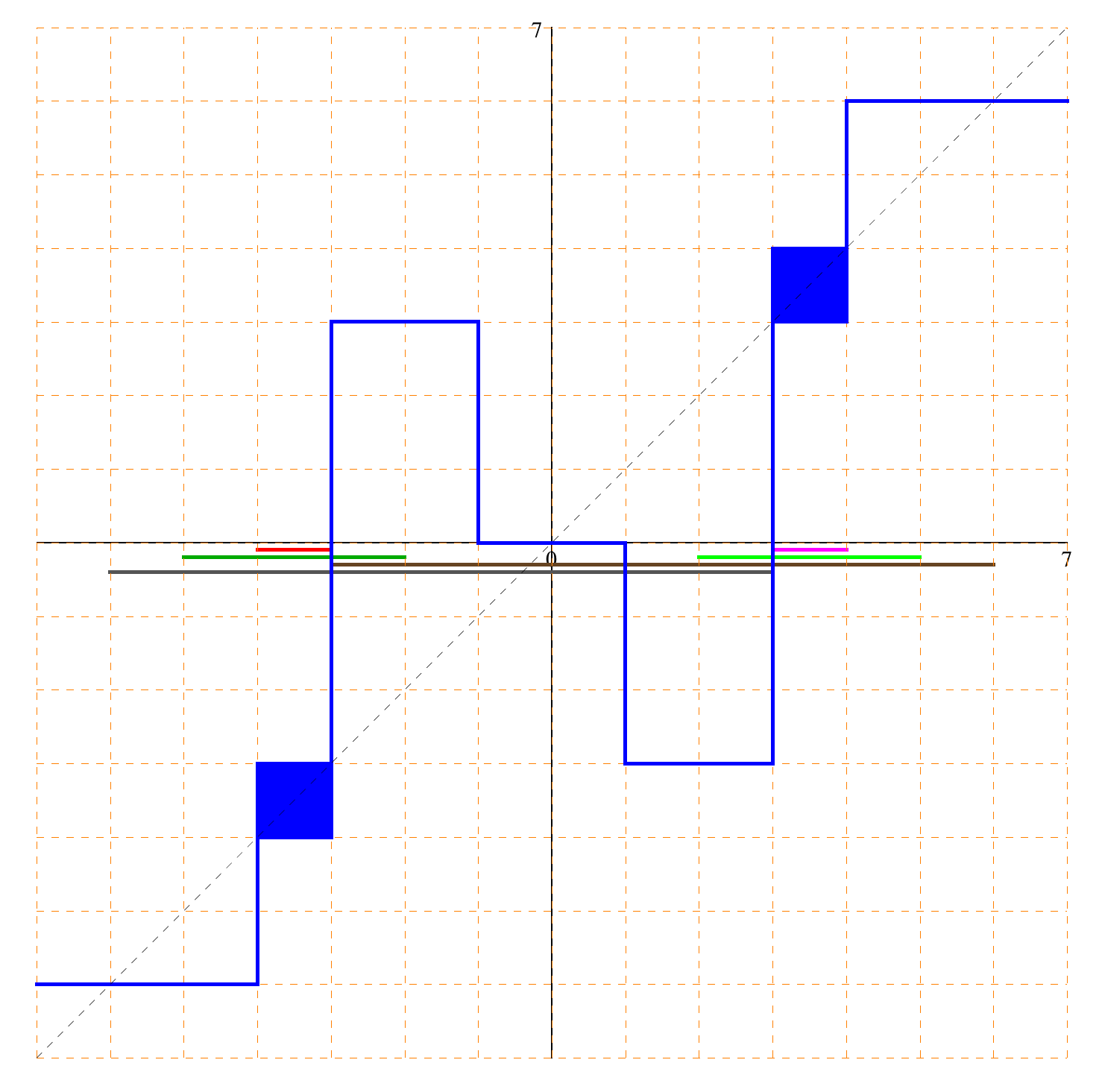}}
\caption{The graph of the map $F$ given by \eqref{eq:f_add1}, marked in blue.
Isolated invariant sets $S_1$ and $S_2$ are marked in red and pink, respectively. Isolating neighborhoods $N_1$ and $N_2$ for $S_1$ and $S_2$ are marked in dark green and green, respectively.
The images $F(S_1)$ and $F(S_2)$ are marked in gray and brown, respectively.
}
\label{fig:add1}
\end{center}
\end{figure}
It is easy to see that $S_1:=[-4,-3]$ and $S_2:=[3,4]$ are isolated invariant sets with respect to $F$, and $N_1:=[-5,-2]$, $N_2:=[2,5]$ isolate $S_1$ and $S_2$, respectively. Observe that any compact neighborhood $N$ of the union $S:=S_1\cup S_2$ has its invariant part significantly larger than $S$, which shows that $S$ is not an isolated invariant set. 
}
\end{ex}
Nevertheless, a suitably reformulated additivity property holds.
\begin{thm}\label{thm:add}
{\rm (}{\bf Additivity property}{\rm )}
Let an isolated invariant set $S$ for a discrete multivalued dynamical system $F$ on a locally compact metrizable space be a disjoint sum of two isolated invariant sets $S_1$ and $S_2$. Assume that 
\begin{equation}\label{disa}
F(S_i)\cap S_j=\emptyset\for i,j=1,2, i\neq j.
\end{equation}
Then $C(S,F)=C(S_1,F)\times C(S_2,F)$ {\rm (for the definition of the product $\times$ see \cite[Proposition 4.5]{M90})}.
\end{thm}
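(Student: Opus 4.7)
The plan is to realize a weak index pair for $S$ as an internal disjoint union of weak index pairs for $S_1$ and $S_2$ and then show that both the Alexander--Spanier cohomology and the induced index endomorphism split along this decomposition.

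First I would exploit the upper semicontinuity of $F$, the compactness of $S_1,S_2$ and the hypothesis (\ref{disa}) to shrink any given isolating neighborhoods of $S_1,S_2$ to disjoint compact isolating neighborhoods $N_1,N_2$ of $S_1,S_2$ enjoying the stronger property $F(N_i)\cap N_j=\emptyset$ for $i\neq j$. This is possible because for any closed $C$ the large counterimage $F^{-1}(C)$ is closed, so by choosing open sets $V_j\supset S_j$ with pairwise disjoint closures and with $F(S_i)\cap\cl V_j=\emptyset$, one obtains open neighborhoods $U_i$ of $S_i$ with $F(U_i)\cap \cl V_j=\emptyset$, from which suitable compact $N_i\subset U_i\cap V_i$ can be extracted. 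Setting $N:=N_1\cup N_2$, the condition $F(N_i)\cap N_j=\emptyset$ forces any bi-infinite solution contained in $N$ to remain in a single component $N_i$, whence $\Inv N=\Inv N_1\sqcup\Inv N_2=S_1\sqcup S_2=S\subset\Int N_1\cup\Int N_2=\Int N$, and $N$ is an isolating neighborhood for $S$.

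Next I would pick weak index pairs $P^i=(P^i_1,P^i_2)$ in $N_i$ and form $P:=(P^1_1\cup P^2_1,\,P^1_2\cup P^2_2)$. Verifying that $P$ is a weak index pair in $N$ reduces to checking (a)--(d) of Definition~\ref{def:wip}, and each follows from the corresponding property of the $P^i$ combined with the separation $F(N_i)\cap N_j=\emptyset$ and the disjointness of $N_1,N_2$: for (a), $F(P^i_k)\cap N=F(P^i_k)\cap N_i\subset P^i_k$; for (b), the $F$-boundary of one summand cannot cross into the other because its image avoids $N_j$; and (c), (d) are immediate. Since $P^1,P^2$ sit in the disjoint compact sets $N_1,N_2$, the summand inclusions $j^i:P^i\hookrightarrow P$ induce an isomorphism $\Phi:H^*(P)\to H^*(P^1)\oplus H^*(P^2)$ in Alexander--Spanier cohomology.

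The heart of the argument, which I expect to be the main obstacle, is to show that under $\Phi$ the index endomorphism $I_P$ decomposes as $I_{P^1}\oplus I_{P^2}$. The key observation is that $X\setminus\Int N\subset X\setminus\Int N_i$ and $N_j\subset X\setminus\Int N_i$ for $j\neq i$, so there is a natural inclusion of pairs $\ell^i:T_N(P)\to T_{N_i}(P^i)$ making the identities $\ell^i\circ i_P\circ j^i=i_{P^i}$ and $\ell^i\circ F_P\circ j^i=F_{P^i}$ hold tautologically. On the other hand, for $k\neq i$ the composites $\ell^i\circ i_P\circ j^k$ and $\ell^i\circ F_P\circ j^k$ send both components of $P^k$ entirely into $(T_{N_i}(P^i))_2$---the first because $P^k_1\cup P^k_2\subset N_k\subset X\setminus\Int N_i$, the second because $F(P^k_1)\cup F(P^k_2)\subset N_k\cup(X\setminus N)\subset X\setminus\Int N_i$, again by $F(N_k)\cap N_i=\emptyset$---so both induce the zero homomorphism in relative cohomology. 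Combining these vanishings with Lemma~\ref{l1} applied both to $(P,N)$ and to $(P^i,N_i)$ and chasing the resulting diagram yields $\Phi\circ I_P\circ\Phi^{-1}=I_{P^1}\oplus I_{P^2}$. Finally, invoking the product property of the Leray functor \cite[Proposition~4.5]{M90}, one obtains
\[
C(S,F)=L(H^*(P),I_P)\cong L(H^*(P^1),I_{P^1})\times L(H^*(P^2),I_{P^2})=C(S_1,F)\times C(S_2,F),
\]
which completes the proof.
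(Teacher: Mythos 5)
Your proof is correct, but it takes a genuinely different route from the one in the paper. The paper treats the obstruction it identifies---that the codomain pairs $T_{N_1}(P^1)$ and $T_{N_2}(P^2)$ defining the two index maps both contain the large set $X\setminus\Int N_i$ and so can never be made disjoint---as something to be removed physically: it passes to the extended phase space $X(P)$ of \cite[Section 7]{BM2016}, embeds $S_1$ at level $0$ and $S_2$ at level $1$, shows through several conjugation diagrams that all the indices are unchanged, and then chooses weak index pairs whose images under the extended system $F^P$ lie in genuinely disjoint pairs $\underline{T}$ and $\overline{T}$, so that cohomology splits as a product on the nose. You instead stay in $X$ and sidestep the non-disjointness algebraically: the key observation is that $T_N(P)\subset T_{N_i}(P^i)$ as pairs (since $\Int N_i\subset\Int N$ and $N_j\subset X\setminus\Int N_i$), which yields your inclusions $\ell^i$, while the off-diagonal composites factor through the degenerate pair $(T_{N_i,2}(P^i),T_{N_i,2}(P^i))$---for the inclusion because $P^k\subset N_k\subset X\setminus\Int N_i$, and for $F$ because $F(N_k)\cap N_i=\emptyset$---hence induce zero on relative cohomology (this vanishing persists for maps determined by morphisms, since the right leg of the morphism then maps into $T_{N_i,2}(P^i)$). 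The resulting block-diagonal identities $\Phi\circ H^*(i_P)\circ L=H^*(i_{P^1})\oplus H^*(i_{P^2})$ and $\Phi\circ H^*(F_P)\circ L=H^*(F_{P^1})\oplus H^*(F_{P^2})$ force $L$ to be an isomorphism, give $\Phi\circ I_P\circ\Phi^{-1}=I_{P^1}\oplus I_{P^2}$, and then \cite[Proposition 4.5]{M90} finishes exactly as in the paper. Both arguments share the same preparation (disjoint $N_1,N_2$ with $F(N_i)\cap N_j=\emptyset$, and the verification that $P^1\cup P^2$ is a weak index pair in $N_1\cup N_2$). Your route is noticeably shorter and avoids the auxiliary construction entirely; what the paper's extended space buys is index maps that are actually separated, i.e.\ have disjoint codomains, which reproduces the structure of the single-valued proof of \cite[Theorem 2.12]{M90}, whereas you separate them only at the level of the induced homomorphisms---which is all the Leray functor needs.
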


Before going to the technical details of the proof let us briefly describe its idea. First of all notice that we can not repeat the argument used for the proof of the additivity property in the single valued case. Namely, for the proof of \cite[Theorem 2.12]{M90} it is essential that for any disjoint isolated invariant sets $S_1$, $S_2$ one can choose isolating neighborhoods, and index pairs $P^1$, $P^2$ so that $F(P^1 _2)$ and $F(P^2 _2)$ are disjoint. This means that the pairs of spaces
$P^1 _i\cup F(P^1 _2)$ and $P^2 _i\cup F(P^2 _2)$, $i=1,2$, used for the construction of the index maps, are disjoint.
For convenience let us temporarily call such (index) maps {\em separated} from one another. One can observe that this is in contrast to the multivalued case, even if the underlying disjoint isolated invariant sets satisfy condition (\ref{disa}). 

In order to overcome this obstacle we shall use the construction of the extended discrete multivalued dynamical system presented in \cite[Section 7]{BM2016}, which we briefly recall.  

Let $N$ be an isolating neighborhood for a given dmds $F$. By \cite[Theorem 4.12]{BM2016} we can take a weak index pair $P$ in $N$, arbitrarily close to $\Inv N$, i.e. such that $P_1\setminus P_2\subset W$ and $W$ is an open neighborhood of $\Inv N$ with $\cl W\subset\Int N$. Then  
the sets $\cl (P_1\setminus P_2)$ and $X\setminus \Int N$ are disjoint; hence there exist compact and disjoint sets $C,D\subset P_1\cup (X\setminus \Int N)$ satisfying $\cl (P_1\setminus P_2)\subset C$ and $X\setminus \Int N\subset D$.
Now we use the Urysohn's lemma to choose a continuous function $\alpha :X\to [0,1]=:I$ such that $\alpha _{|_C}=0$ and $\alpha _{|_D}=1$.
Let
$$
X(P):=((P_1\setminus P_2)\times \{0\})\cup
((P_2\cup (X\setminus \Int N))\times I)\cup (X\times \{1\})
$$
with the Tichonov topology.
Consider $$\mu:X(P)\ni (x,t)\mapsto t+(1-t)\alpha (x)\in I$$ 
and define 
$$
F^P:X(P)\ni(x,t)\mapsto F(x)\times\{\mu(x,t)\}\subset X(P).
$$
Then $F^P$ is a well-defined, upper semicontinuous, acyclic valued map (cf. \cite[Proposition 7.2]{BM2016}).

For our needs we construct the space $X(P)$ for an appropriate, sufficiently small, isolating neighborhood $N$ of the union $S_1\cup S_2$, and a weak index pair $P$ in $N$. Then we consider adequate embedings ${S_i} '$ of the underlying isolated invariant sets $S_i$, $i=1,2$, into $X(P)$, one of them on level $0$ and the second on level $1$. The embedings are defined in such a way that they are isolated invariant sets with respect to $F^P$ and we have $C({S_i} ',F^P)=C(S_i,F)$, $C({S_1} ' \cup {S_2} ',F^P)=C(S_1\cup S_2,F)$. 
Clearly, this does not guarantee that the associated index maps are separated, regardless of the choice of weak index pairs ${P^i} '$
 for $S_i '$.  
 Recall that, in general, for the definition of index maps in the multivalued case we are forced to use pairs of spaces $T({P^1} ')$ and $T({P^2} ')$, which are not disjoint (see Definition \ref{df:ip}).
Although index maps themselves are not separated from one another, nice properties of the extended dynamical system $F^P$ facilitate the choice of weak index pairs which enable us to construct auxiliary maps that are isomorphic to the underlying index maps, and at the same time  separated, as desired. It turns out that this is sufficient for our purposes.\\

{\bf Proof of Theorem \ref{thm:add}:}
By (\ref{disa}) and the upper semicontinuity of $F$ we can take disjoint isolating neighborhoods $N_1 '$ and $N_2 '$ of $S_1$ and $S_2$, respectively, such that $F(N_i ')\cap N_j '=\emptyset$ for $i,j=1,2$, $i\neq j$. Let $N'$ be an isolating neighborhood of $S$. Put $N_i:= N_i '\cap N'$. Then $N_1$, $N_2$ and $N:=N_1\cup N_2$ are isolating neighborhoods of $S_1$, $S_2$ and $S$, respectively. Clearly,
\begin{equation}\label{fni}
F(N_i)\cap N_j=\emptyset\for i,j=1,2, i\neq j.
\end{equation}
By \cite[Theorem 4.12]{BM2016}, for arbitrarily small neighborhoods $W_i$ of $S_i$ there exist weak index pairs $P^i$ in $N_i$ with $P^i _1\setminus P^i _2\subset W_i\cap\Int N_i$, $i=1,2$. One can verify that condition (\ref{fni}) guaranties that $P:=P^1\cup P^2$ is a weak index pair in $N$.

Consider the space $X(P)$ and the dmds $F^P$, and the homeomorphism (onto its image)
$$
\iota_0:P_1\cup(X\setminus \Int N)\ni x\mapsto (x,0)\in\ X(P).
$$

By \cite[Proposition 7.5]{BM2016}, $\underline{S}:=\iota_0 (S)$ is an isolated invariant set with respect to $F^P$ and $\underline{N}:=\iota_0 ((P_1\cup(X\setminus \Int N))\cap N)$ is an isolating neighborhood of $\underline{S}$. Moreover, $\underline{P}:=\iota_0 (P)$ is a weak index pair in $\underline{N}$. 

Put $\underline{S_i}:=\iota_0 (S_i)$, $\underline{N}_i:=\iota_0 ((P^i _1\cup(X\setminus \Int N_i))\cap N_i)$ and $\underline{P^i}:=\iota_0 (P^i)$ for $i=1,2$. Since $N_1\cap N_2=\emptyset$, it is clear that 
$\underline{S}=\underline{S}_1\cup\underline{S}_2$, $\underline{N}=\underline{N}_1\cup\underline{N}_2$ and $\underline{P}=\underline{P}^1\cup\underline{P}^2$. 
Moreover, by (\ref{fni}) we have
\begin{equation}\label{fni2}
F^P(\underline{N}_i)\cap \underline{N}_j=\emptyset\for i,j=1,2, i\neq j.
\end{equation}
Thus, it follows that $\underline{N}_i$ is an isolating neighborhood of $\underline{S}_i$, and $\underline{P}^i$ is a weak index pair in $\underline{N}_i$, for $i=1,2$ (see Figure \ref{fig:add_p1}).    
\begin{figure}[ht]
\begin{center}
\resizebox{120mm}{!}
{
\includegraphics{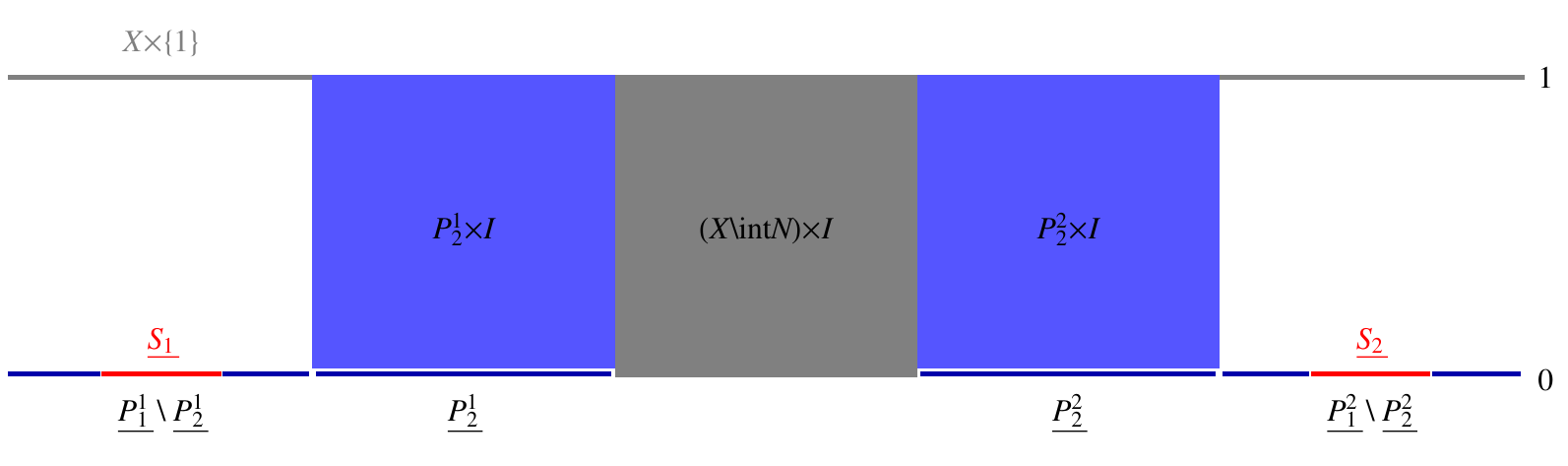}}
\caption{The extended space $X(P)$ and the embeddings $\underline{S}_i$ of the isolated invariant sets $S_i$. The isolated invariant sets $\underline{S}_1$ and $\underline{S}_2$ are indicated by red line segments. The components of the associated weak index pairs $\underline{P^1}$ and $\underline{P^2}$ are indicated by blue line segments.
}
\label{fig:add_p1}
\end{center}
\end{figure}
By \cite[Lemma 6.1(i)]{BM2016}, $F$ is a map of pairs $(P^1 _1,P^1 _2)$ and $(T_{N_1,1}(P^1),T_{N_1,2}(P^1))$. Set $(V_1,V_2):=(T_{\underline{N}_1,1}(\underline{P}^1)\setminus ((\Int N\setminus P)\times\{1\}),T_{\underline{N}_1,2}(\underline{P}^1)\setminus ((\Int N\setminus P)\times\{1\}))$. By \cite[Lemma 6.1(i)]{BM2016} and (\ref{fni}), $F^P$ maps $(\underline{P}^1 _1,\underline{P}^1 _2)$ into $(V_1,V_2)\subset (T_{\underline{N}_1,1}(\underline{P}^1),T_{\underline{N}_1,2}(\underline{P}^1))$.
We have the commutative diagram
$$
\begin{tikzcd}
(P^1 _1,P^1 _2) \ar{r}{F} & (T_{N_1,1}(P^1),T_{N_1,2}(P^1)) & (P^1 _1,P^1 _2) \ar{l}[swap]{j_1} \\[3ex]
(\underline{P}^1 _1,\underline{P}^1 _2) \ar{u}{p_1} \ar{r}{F^P}\ar{rd}[swap]{F^P} & (V_1,V_2)\ar{d}[swap]{j_4} \ar{u}{p_2} & (\underline{P}^1 _1,\underline{P}^1 _2) \ar{l}[swap]{j_2}\ar{u}{p_1}\ar{ld}{j_3}\\[3ex]
&(T_{\underline{N}_1,1}(\underline{P}^1),T_{\underline{N}_1,2}(\underline{P}^1))&
\end{tikzcd}
$$
in which $j_1, j_2, j_3$ and $j_4$ are inclusions and
$$
\begin{array}{l}
p_1:(\underline{P}^1 _1,\underline{P}^1 _2)\ni(x,0)\mapsto x\in (P^1 _1,P^1 _2),\\
p_2:(V_1,V_2)\ni (x,t)\mapsto x\in (T_{N_1,1}(P^1),T_{N_1,2}(P^1))
\end{array}
$$
are projections. By \cite[Lemma 6.1(ii)]{BM2016} 
inclusions $j_1$ and $j_3$ induce isomorphisms in cohomology. Observe that
$$
T_{N_1,1}(P^1)\setminus T_{N_1,2}(P^1)=(\underline{P}^1 _1\setminus \underline{P}^1 _2)\cap\Int \underline{N}_1
$$
and
$$
\begin{array}{rcl}
V_1\setminus V_2
&=&(T_{\underline{N}_1,1}(\underline{P}^1)\setminus ((\Int N\setminus P)\times\{1\}))\\[1ex]
&&\hspace{4cm}\setminus (T_{\underline{N}_1,2}(\underline{P}^1)\setminus ((\Int N\setminus P)\times\{1\}))
\\[1ex]
&=&(T_{\underline{N}_1,1}(\underline{P}^1)\setminus T_{\underline{N}_1,2}(\underline{P}^1))\setminus ((\Int N\setminus P)\times\{1\}))
\\[1ex]
&=&(\underline{P}^1 _1\setminus \underline{P}^1 _2)\cap\Int \underline{N}_1.
\end{array}
$$
Therefore,
inclusion $j_2$ induces an isomorphism, as an excision, and so does $j_4$. Clearly, projection $p_1$ induces an isomorphisms; hence so does $p_2$. Consequently, $I_{P^1}$ and $I_{\underline{P}^1}$ are conjugate; hence 
\begin{equation}\label{cs1}
C(S_1,F)=C(\underline{S_1},F^P).
\end{equation}
Now, consider a weak index pair $R$ in an isolating neighborhood $W:=\cl (P_1 ^1\setminus P_2 ^1)$ of $S_1$. It is easy to see that $\underline{W}:=\iota _0 (W)$ is an isolating neighborhood of $\underline{S_1}$ and $\underline{R}:=\iota _0 (R)$ is a weak index pair in $\underline{W}$ with respect to $F^P$. By the independence of the Conley index of the choice of an isolating neighborhood and of a weak index pair (cf. \cite[Theorem 6.4]{BM2016}), we have
\begin{equation}\label{eq:lcs_1}
C(\underline{S_1},F^P)=
L(H^*(\underline{R}),I_{\underline{R}}),
\end{equation}
where $L$ stands for the Leray functor.

By \cite[Lemma 6.1(i)]{BM2016}, $F^P(\underline{R})\subset T_{\underline{W}}(\underline{R})$. Moreover, using \cite[Corollary 7.3(i)]{BM2016} and (\ref{fni2}), we infer that 
\begin{equation}\label{eq:fp_R}
F^P (\underline{R})\subset \underline{T},
\end{equation}
where
$\underline{T}:=
\{(x,0)\in T_{\underline{W}}(\underline{R})\setminus\Int N_2\}$ (see Figure \ref{fig:add_r}). 
\begin{figure}[ht]
\begin{center}
\resizebox{120mm}{!}
{
\includegraphics{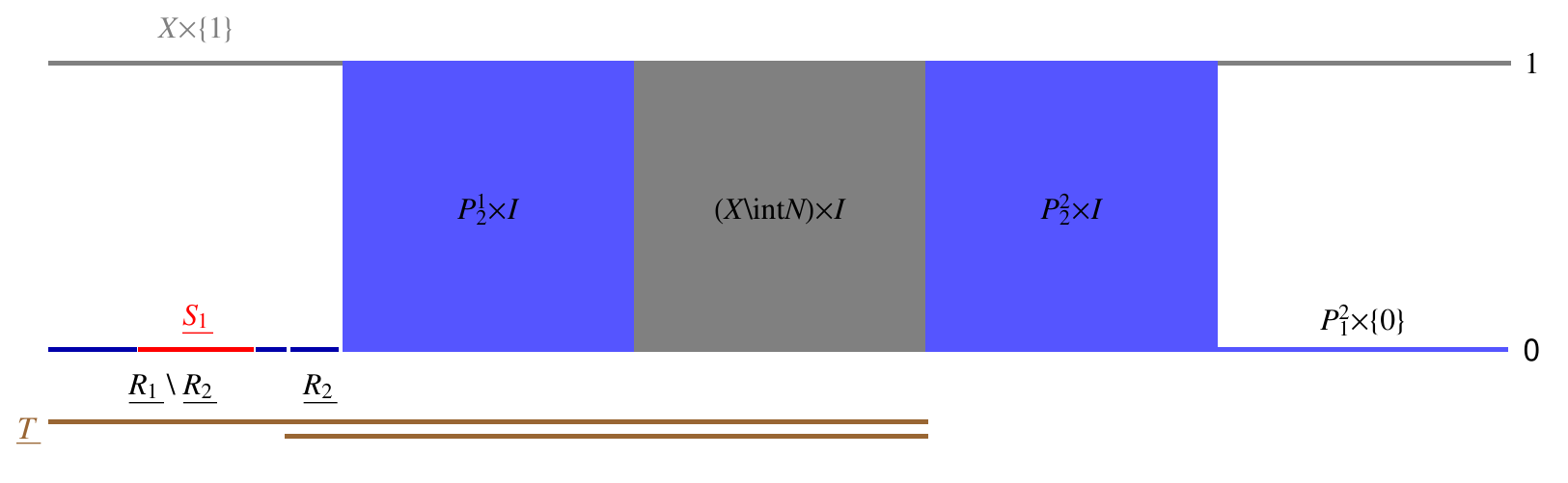}}
\caption{The extended space $X(P)$ and the mutual location of isolated invariant set $\underline{S}_1$, weak index pair $\underline{R}$ and spaces $\underline{T}$. $\underline{S}_1$ is marked in red. The weak index pair $\underline{R}$ is indicated by blue line segments. Components of the pair of spaces $\underline{T}$ lying on level $0$ are indicated by brown line segments.
}
\label{fig:add_r}
\end{center}
\end{figure}
We have the following commutative diagram
$$
\begin{tikzcd}
&(\underline{T}_1,\underline{T}_2)\ar{d}{i}&(\underline{R}_1,\underline{R}_2)\ar{dl}{i_{\underline{R}}}\ar{l}[swap]{i_{\underline{R},\underline{T}}} \\[3ex]
(\underline{R}_1,\underline{R}_2)\ar{ur}{F^P _{\underline{R},\underline{T}}} \ar{r}[swap]{F^P _{\underline{R}}}&(T_{\underline{W},1}(\underline{R}),T_{\underline{W},2}(\underline{R}))
\end{tikzcd}
$$
in which by $i$ (with a subscript) we denote inclusions and by $F^P$ with subscripts, the contractions of $F^P$ to appropriate pairs of pairs. By \cite[Lemma 6.1(i)]{BM2016}, inclusion $i_{\underline{R}}$ induces an isomorphism in the Alexander-Spanier cohomology. 
Observe that
$$
\begin{array}{rcl}
T_{\underline{W},1}(\underline{R})\setminus T_{\underline{W},2}(\underline{R})
&=&(\underline{R}_1\cup (X(P)\setminus\Int \underline{W}))\setminus (\underline{R}_2\cup (X(P)\setminus\Int \underline{W}))\\[1ex]
&=&(\underline{R}_1\setminus \underline{R}_2)\cap \Int \underline{W}
\end{array}
$$
and
$$
\begin{array}{rcl}
\underline{T}_1\setminus\underline{T}_2
&=&\{(x,0)\in T_{\underline{W},1}(\underline{R})\setminus\Int N_2\}\setminus \{(x,0)\in T_{\underline{W},2}(\underline{R})\setminus\Int N_2\}\\[1ex]
&=&\{(x,0)\in (T_{\underline{W},1}(\underline{R})\setminus T_{\underline{W},2}(\underline{R}))\setminus\Int N_2\}\\[1ex]
&=&\{(x,0)\in ((\underline{R}_1\setminus \underline{R}_2)\cap \Int \underline{W})\setminus\Int N_2\}\\[1ex]
&=&\{(x,0)\in (\underline{R}_1\setminus \underline{R}_2)\cap \Int \underline{W}\}\\[1ex]
&=&(\underline{R}_1\setminus \underline{R}_2)\cap \Int \underline{W}.
\end{array}
$$
Thus, by the strong excision property, inclusion $i_{\underline{R},\underline{T}}$ induces an isomorphism in the Aleksander-Spanier cohomology, and so does $i$. Therefore, we have a well-defined endomorphism $H^*(F^P _{\underline{R},\underline{T}})\circ H^*(i_{\underline{R},\underline{T}})^{-1}$ of $H^*(\underline{R})$.  
Moreover, by the commutativity of the above diagram
\begin{equation}\label{eq:ir_ir}
I_{\underline{R}}=H^*(F^P _{\underline{R},\underline{T}})\circ H^*(i_{\underline{R},\underline{T}})^{-1}.
\end{equation}

Consider the homeomorphism (onto its image)
$$
\iota _1:X\ni x\mapsto (x,1)\in X(P)
$$
and define $\overline{S}_i:=\iota_1 (S_i)$, $\overline{N}_i:=\iota_1 (N_i)$, and $\overline{P^i}:=\iota_1 (P^i)$, for $i=1,2$.
One can verify, using 
\cite[Proposition 7.1]{BM2016}, that $\overline{S}_i$ is an isolated invariant set with respect to $F^P$, $\overline{N}_i$ is an isolating neighborhood, and $\overline{P^i}$ is a weak index pair in $\overline{N}_i$. 

By \cite[Lemma 6.1(i)]{BM2016}, $F$ is a map of pairs $(P^2 _1,P^2 _2)$ and \linebreak $(T_{N_2,1}(P^2),T_{N_2,2}(P^2))$. Moreover, by \cite[Lemma 6.1(i)]{BM2016}, (\ref{fni}) and \cite[Proposition 7.1.3]{BM2016}, we have
\begin{equation}\label{eq:fp_P2}
F^P(\overline{P^2})\subset \overline{T},
\end{equation}
where
$(\overline{T}_1,\overline{T}_2):=(\iota_1(P^2 _1\cup(X\setminus \Int N)),\iota_1(P^2 _2\cup(X\setminus \Int N)))$ (see Figure \ref{fig:add_p2}).
\begin{figure}[ht]
\begin{center}
\resizebox{120mm}{!}
{
\includegraphics{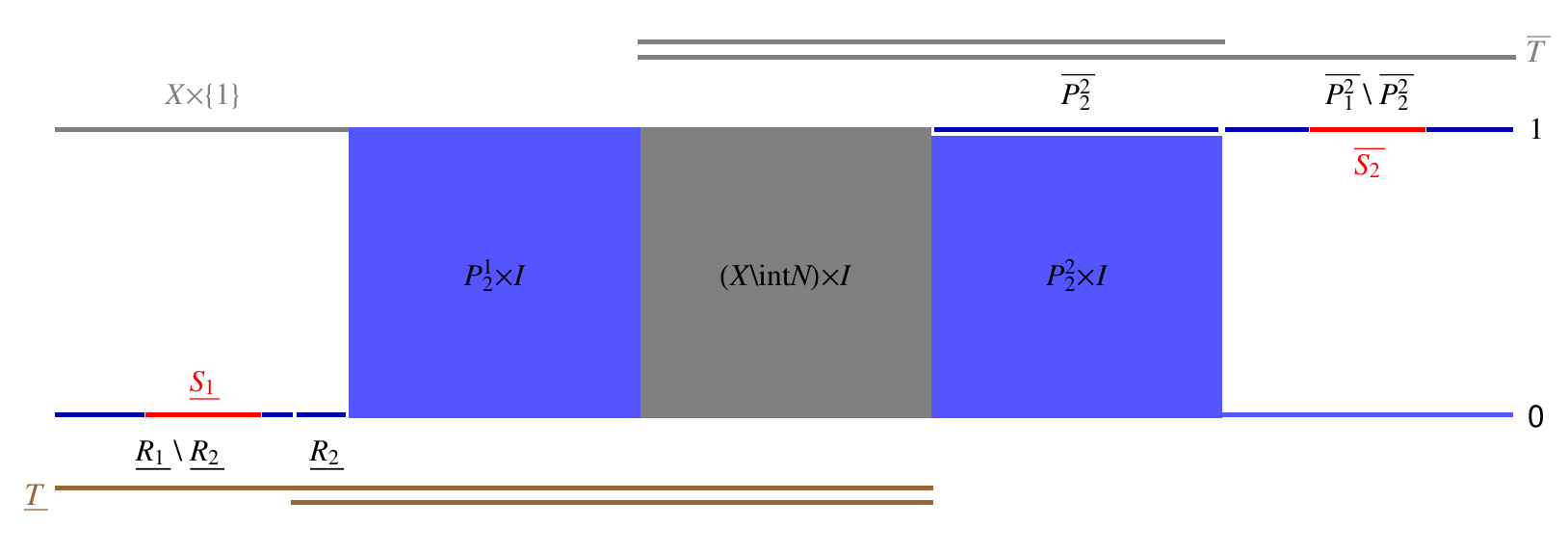}}
\caption{The extended space $X(P)$ and the mutual location of isolated invariant sets 
$\underline{S_1}$ and $\overline{S_2}$, weak index pairs $\underline{R}$ and $\overline{P^2}$, and pairs of spaces $\underline{T}$ and $\overline{T}$. The sets $\underline{S_1}$ and $\overline{S_2}$ on level $0$ and level $1$, respectively, are marked in red. The associated weak index pairs $\underline{R}$ and 
$\overline{P^2}$ are indicated by blue line segments. Brown and gray line segments indicate location of components of the pairs of spaces $\underline{T}$ on level $0$ and $\overline{T}$ on level $1$, respectively.
}
\label{fig:add_p2}
\end{center}
\end{figure}

Clearly, $(\overline{T}_1,\overline{T}_2)\subset (T_{\overline{N}_2,1}(\overline{P}^2),T_{\overline{N}_2,2}(\overline{P}^2))$. We have the following commutative diagram
$$
\begin{tikzcd}
(P^2 _1,P^2 _2) \ar{r}{F} & (T_{N_2,1}(P^2),T_{N_2,2}(P^2)) & (P^2 _1,P^2 _2) \ar{l}[swap]{j_5} \\[3ex]
(\overline{P}^2 _1,\overline{P}^2 _2) \ar{u}{p_3} \ar{r}{F^P _{\overline{P^2}}}\ar{rd}[swap]{F^P_{\overline{P^2},\overline{T}}} & (T_{\overline{N}_2,1}(\overline{P}^2),T_{\overline{N}_2,2}(\overline{P}^2)) \ar{u}{p_4} & (\overline{P}^2 _1,\overline{P}^2 _2) \ar{l}[swap]{j_6}\ar{u}{p_3}\ar{ld}{i_{{\overline{P^2},\overline{T}}}}\\[3ex]
&(\overline{T}_1,\overline{T}_2)\ar{u}{j_7}&
\end{tikzcd}
$$
in which $j_5, j_6, j_7$ and $i_{\overline{P^2},\overline{T}}$ are inclusions and
$$
\begin{array}{l}
p_3:(\overline{P}^2 _1,\overline{P}^2 _2)\ni(x,1)\mapsto x\in (P^2 _1,P^2 _2),\\
p_4:(T_{\overline{N}_2,1}(\overline{P}^2),T_{\overline{N}_2,2}(\overline{P}^2))\ni (x,t)\mapsto x\in (T_{N_2,1}(P^2),T_{N_2,2}(P^2))
\end{array}
$$
are projections. \cite[Lemma 6.1 (ii)]{BM2016} 
guaranties that inclusions $j_5$ and $j_6$ induce isomorphisms in cohomology. It is evident that projection $p_3$ induces an isomormphism, therefore, $p_4$ induces an isomorphisms too. Thus $I_{P^2}$ and $I_{\overline{P}^2}$ are conjugate and we have
\begin{equation}\label{cs2}
C(S_2,F)=C(\overline{S}_2,F^P).
\end{equation}
Since $\overline{T}_1\setminus\overline{T}_2=T_{\overline{N}_2,1}(\overline{P}^2)\setminus T_{\overline{N}_2,2}(\overline{P}^2)=(\overline{P}^2 _1\setminus\overline{P}^2 _2)\cap\Int \overline{N_2}$, inclusions $j_7$ and $i_{\overline{P^2},\overline{T}}$ are excisions; hence, they induce isomorphisms in cohomology. Consequently
\begin{equation}\label{eq:is2}
I_{\overline{P^2}}=H^*(F^P_{\overline{P^2},\overline{T}})\circ H^*(i_{\overline{P^2},\overline{T}})^{-1}.
\end{equation}

Now, put $K:=\underline{S}_1\cup\overline{S}_2$, $Q:=\underline{P}^1\cup\overline{P}^2$ and $M:=\underline{N}_1\cup\overline{N}_2$. 
By (\ref{fni}) 
we have
\begin{equation}\label{fni3}
F^P(\underline{N}_1)\cap \overline{N}_2=\emptyset \mbox{ and } F^P(\overline{N}_2)\cap \underline{N}_1=\emptyset.
\end{equation}
 This guaranties that $M$ is an isolating neighborhood of $K$ with respect to $F^P$, and $Q$ is a weak index pair in $M$.

Again we use \cite[Lemma 6.1(i)]{BM2016} to infer that $F$ is a map of pairs $(P _1,P _2)$ and $(T_{N,1}(P),T_{N,2}(P))$, and $F^P$ maps $(Q _1,Q _2)$ into $(T_{M,1}(Q),T_{M,2}(Q))$. In fact, as a consequence of (\ref{fni3}), $F^P(Q)\subset T_{M}(Q)\setminus ((\Int N_1\setminus P^1 _1)\times\{1\})\setminus ((\Int N_2\setminus P^2 _1)\times\{0\})=:Y$.
Consider the following commutative diagram
$$
\begin{tikzcd}
(P _1,P _2) \ar{r}{F} & (T_{N,1}(P),T_{N,2}(P)) & (P _1,P _2) \ar{l}[swap]{j_9} \\[3ex]
(Q_1,Q_2) \ar{u}{p_5} \ar{r}{F^P}\ar{rd}[swap]{F^P} & (Y_1,Y_2) \ar{u}{p_6}\ar{d}{j_{12}} & (Q_1,Q_2) \ar{l}[swap]{j_{10}}\ar{ld}{j_{11}}\ar{u}{p_5}\\[3ex]
&(T_{M,1}(Q),T_{M,2}(Q))&
\end{tikzcd}
$$
in which $j_9, j_{10}, j_{11}$ and $j_{12}$ are inclusions, 
$$
\begin{array}{l}
p_5:=p_1\cup p_3:(Q_1,Q_2)\ni(x,t)\mapsto x\in (P_1,P_2),\\
p_6:(Y_1,Y_2)\ni (x,t)\mapsto x\in (T_{N,1}(P),T_{N,2}(P)).
\end{array}
$$
are projections, and the contractions of $F^P$ to appropriate pairs are denoted simply by $F^P$. Inclusions $j_9$ and $j_{11}$ induce isomorphisms in cohomology, by \cite[Lemma 6.1(ii)]{BM2016}. Inclusions $j_{10}$ and $j_{12}$ also induce isomorphisms, as excisions, because $Y_1\setminus Y_2=T_{M,1}(Q)\setminus T_{M,2}(Q)=(Q_1\setminus Q_2)\cap\Int M$.  Since $P^1$ and $P^2$ are disjoint, and so are $\underline{P}^1$ and $\overline{P}^2$, projection $p_5$ is well defined and it induces an isomorphism in cohomology. Consequently, so does $p_6$. Therefore, $I_{P}$ and $I_{Q}$ are conjugate, and we have
\begin{equation}\label{cs3}
C(S,F)=C(K,F^P).
\end{equation}
Observe, that $\underline{M}:=\underline{W}\cup\overline{N_2}$ is an isolating neighborhood of $K$ with respect to $F^P$, and $\underline{Q}:=\underline{R}\cup\overline{P^2}$ is a weak index pair in $\underline{M}$. Now, using (\ref{eq:fp_R}) and (\ref{eq:fp_P2}), and arguing similarly as for the index maps $I_{\underline{R}}$ and $I_{\overline{P^2}}$, one can show that
\begin{equation}\label{eq:irp2}
I_{\underline{Q}}=H^*(F^P _{\underline{Q},\underline{T}\cup\overline{T}})\circ H^*(i_{\underline{Q},\underline{T}\cup\overline{T}})^{-1},
\end{equation}
where $i_{\underline{Q},\underline{T}\cup\overline{T}}:\underline{Q}\to \underline{T}\cup\overline{T}$ is the inclusion and 
$F^P _{\underline{Q},\underline{T}\cup\overline{T}}$ stands for the contraction of $F^P$ to the domain $\underline{Q}$ and codomain $\underline{T}\cup\overline{T}$.

Taking into account that $\underline{R}$ and $\overline{P^2}$ are disjoint pairs of spaces, we have $H^*(\underline{M})=H^*(\underline{R}\cup\overline{P}^2)=H^*(\underline{R})\times H^*(\overline{P}^2)$. 
Furthermore, $\underline{T}$ and $\overline{T}$ are disjoint, as desired. Therefore, $H^*(\underline{T}\cup\overline{T})=H^*(\underline{T})\times H^*(\overline{T})$ and, by (\ref{eq:irp2}), we have
$$
\begin{array}{rcl}
I_{\underline{Q}}&=&H^*(F^P _{\underline{Q},\underline{T}\cup\overline{T}})\circ H^*(i_{\underline{Q},\underline{T}\cup\overline{T}})^{-1}\\[1ex]
&=&\left(
H^*(F^P _{\underline{R},\underline{T}})\times
H^*(F^P _{\overline{P^2},\overline{T}})
\right)\circ\left(
H^*(i_{\underline{R},\underline{T}})^{-1}\times
H^*(i_{\overline{P^2},\overline{T}})^{-1}\right)
\\[1ex]
&=&\left(
H^*(F^P _{\underline{R},\underline{T}})\circ H^*(i_{\underline{R},\underline{T}})^{-1}
\right)
\times\left(
H^*(F^P _{\overline{P^2},\overline{T}})\circ H^*(i_{\overline{P^2},\overline{T}})^{-1}
\right),
\end{array}
$$
which, along with (\ref{eq:ir_ir}) and (\ref{eq:is2}), yields
$$
I_{\underline{Q}}=I_{\underline{R}}\times I_{\overline{P^2}}.
$$
Consequently, by \cite[Proposition 4.5]{M90} and (\ref{eq:lcs_1}), we have
$$
\begin{array}{rcl}
C(K,F^P)&=&L(H^*(\underline{Q}),I_{\underline{Q}})\\[1ex]
&=&L(H^*(\underline{R}),I_{\underline{R}})\times L(H^*(\overline{P^2}),I_{\overline{P^2}})\\[1ex]
&=&C(\underline{S}_1,F^P)\times C(\overline{S}_2,F^P).
\end{array}
$$     
This, along with (\ref{cs3}), (\ref{cs1}) and (\ref{cs2}), completes the proof.     
\hfill\qed

Notice that, if $F$ is single valued, then the disjointness of isolated invariant sets $S_1$ and $S_2$ implies condition (\ref{disa}). It is straightforward to see that this is also the case  for multivalued dynamical systems and strongly isolated invariant sets. 

However, the following example shows that in general the assertion (\ref{disa}) is essential for the additivity property of the Conley index.
\begin{ex}\label{ex:not_add}
{\em Let $X=[0,8]$ and let $F:X\mto X$ be given by
\begin{equation}\label{eq:f_add2}
F(x)=\left\{
\begin{array}{rl}
\{0\}&\for x\in [0,1)\\
{[0,1]}&\for x=1\\
\{1\}&\for x\in (1,2)\\
{[1,6]}&\for x=2\\
{[2,6]}&\for x\in (2,3)\\
{{2,7}}&\for x=3\\
\{7\}&\for x\in (3,4)\\
{[4,7]}&\for x=4\\
\{4\}&\for x\in (4,5)\\
{[4,6]}&\for x=5\\
{[5,6]}&\for x\in (5,6)\\
{[5,7]}&\for x=6\\
\{7\}&\for x\in (6,7)\\
{[7,8]}&\for x=7\\
\{8\}&\for x\in (7,8].
\end{array}
\right.
\end{equation}
The graph of $F$ is presented in Figure \ref{fig:add}. 
\begin{figure}[ht]
\begin{center}
\resizebox{80mm}{!}
{
\includegraphics{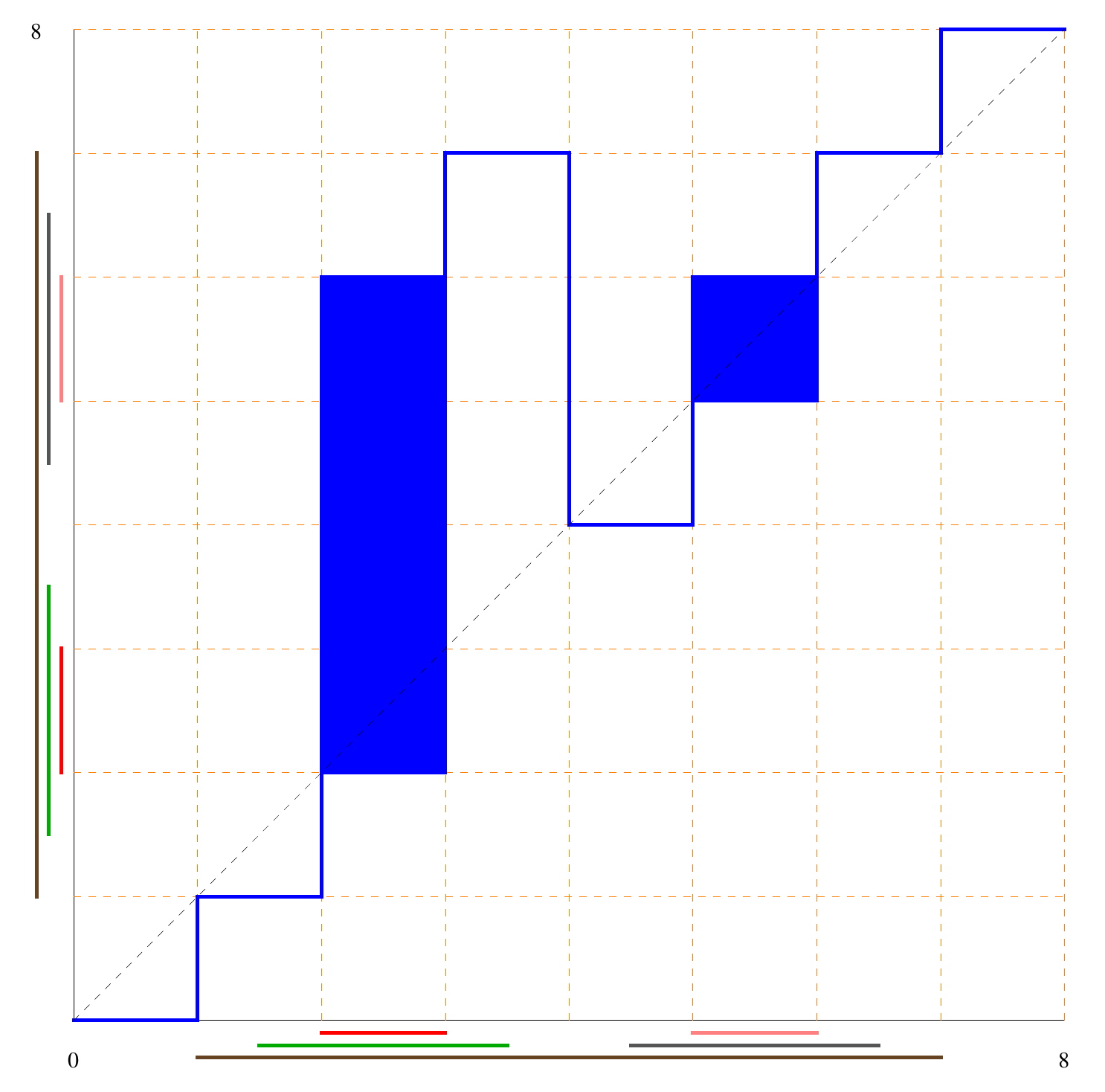}}
\caption{The graph of the map $F$ given by \eqref{eq:f_add2}, marked in blue.
Isolated invariant sets $S_1$ and $S_2$ are marked in red and pink, respectively. Isolating neighborhoods $N_1$ and $N_2$ for $S_1$ and $S_2$ are marked in green and gray, respectively.
The image $F(S_1)$, showing that $F(S_1)\cap S_2\neq\emptyset$, is marked in brown.
}
\label{fig:add}
\end{center}
\end{figure}
Consider isolated invariant sets $S_1:=[2,3]$ and $S_2:=[5,6]$, and isolating neighborhoods $N_1:=[1.5,3.5]$ and $N_2:=[4.5,6.5]$, respectively. Then $S:=S_1\cup S_2$ is an isolated invariant set with respect to $F$ and $N:=N_1\cup N_2$ isolates $S$.

Put $P^i _1:=N_i$ for $i=1,2$, $P^1 _2:=\{1.5,3.5\}$ and $P^2 _2:=\{4.5,6.5\}$. One can observe that $P^i:=(P^i _1,P^i _2)$ is a weak index pair in $N_i$, $i=1,2$.

It is easy to see that $H ^k (P^i)$ is trivial for $k\neq 1$ and that it has one generator for $k=1$. Moreover, $I_{P^i}=\id$. 
We have
$$
C_k(S_i,F)=\left\{\begin{array}{rl}
(\Z,\id )&\mbox{for } k=1\\[1ex]
0&\mbox{otherwise.}
\end{array}
\right.
$$ 
Observe that $P:=P^1\cup P^2$ is a weak index pair in $N$. Now, $H ^k (P)$ is trivial for $k\neq 1$ and it has two generators for $k=1$.
The index map is the isomorphism of the form (up to the order of generators)
$$
I_P=\left[\begin{array}{rr}
1&0\\
1&1
\end{array}
\right].
$$
We have
$$
C_k(S,F)=\left\{\begin{array}{rl}
(\Z ^2,I_P )&\mbox{for } k=1\\[1ex]
0&\mbox{otherwise.}
\end{array}
\right.
$$ 
Evidently,
$
C(S,F)\neq C(S_1,F)\times C(S_2,F).
$  
}
\end{ex}
\section{Homotopy property of the Conley Index}\label{sec:hom_prop}
As mentioned in the introduction we want the theory we develop to be useful for the reconstruction of the dynamics from a finite number of samples. For the details concerning the procedure of sampling the dynamics we refer to \cite{BM2016}. Let us just recall that the technique of sampling a given (usually unknown) dynamical system $f$ provides us with a generator $F$ of a discrete multivalued dynamical system. The procedure itself does not guarantee that $f$ is a selector of $F$. Even worse, $F$ does not have to contain any continuous selector, however we can expect that $f$ lies nearby $F$. We do not intend to artificially enlarge the values of $F$ in order to ensure that $F$ covers $f$, as it may result in loosing the isolation property.    
Our concept is to apply the Conley index theory directly to the multivalued dynamical system $F$ constructed by sampling the dynamics $f$, and then try to extend the results to the underlying unknown $f$. For this purpose the homotopy (continuation) property of the Conley index is crucial.    

Let $X$ be a locally compact metrizable space, let $\Lambda\subset \R$ be a compact interval, and let an upper semicontinuous mapping $F:\Lambda\times X\mto X$ with compact values be determined by a given morphism. 
Assume that, for each $\lambda\in\Lambda$, $F_\lambda:X\mto X$, given by $F_\lambda(x):=F(\lambda,x)$, is a discrete multivalued dynamical system. The family $\{F_{\lambda}\}$ will be referred to as a parameterized family of discrete multivalued dynamical systems.

We will simply write $\lambda$ instead of $F_\lambda$ whenever $F_\lambda$ appears as a parameter. According to this assumption, given a compact subset $N\subset X$ and $\lambda\in\Lambda$, the sets $\Inv ^{(\pm)}N$ with respect to $F_\lambda$ are denoted by $\Inv^{(\pm)}(N,\lambda)$.

The main result of this section is the following theorem. 
\begin{thm}{\em (}{\bf Homotopy} {\em(}{\bf continuation}{\em)} {\bf property}{\em)}\label{thm:homotopy}
Let $\Lambda\subset\R$ be a compact interval and let $F_\lambda:X\mto X$ be a parameterized family of dmds. If $N$ is an isolating neighborhood for each $\lambda\in\Lambda$ then $C(\Inv(N,\lambda ))$ does not depend on $\lambda\in\Lambda$.  
\end{thm}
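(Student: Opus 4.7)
The plan is to establish local constancy of $C(\Inv(N,\lambda))$ with respect to $\lambda$ and then invoke the connectedness of the compact interval $\Lambda$. Fix $\lambda_0\in\Lambda$. Using \cite[Theorem 4.12]{BM2016}, I would pick a weak index pair $P$ for $F_{\lambda_0}$ in $N$ with $P_1\setminus P_2$ contained in an arbitrarily small open neighborhood $W$ of $\Inv(N,\lambda_0)$ whose closure lies in $\Int N$. The core claim is: there exists a relative neighborhood $\Lambda_0\subset\Lambda$ of $\lambda_0$ such that, for every $\lambda\in\Lambda_0$, the same pair $P$ is still a weak index pair in $N$ for $F_\lambda$ and the associated index maps satisfy $I_{P,\lambda}=I_{P,\lambda_0}$.

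Checking that $P$ remains a weak index pair amounts to verifying conditions (a)--(d) of Definition~\ref{def:wip}. Condition (d) depends only on $P$ and $N$ and is therefore automatic. For (c), upper semicontinuity of the map $\lambda\mapsto\Inv(N,\lambda)$, which I would derive from upper semicontinuity of $F$ on $\Lambda\times X$ together with a compactness argument on the space of $N$-valued solutions, yields $\Inv(N,\lambda)\subset W\subset \Int(P_1\setminus P_2)$ for $\lambda$ close enough to $\lambda_0$. Conditions (a) and (b) are of the form ``$F_\lambda$ sends a compact set into a prescribed closed set''; upper semicontinuity of the joint map $F\colon\Lambda\times X\mto X$ together with a standard finite-cover argument on the compact sets $P_1$ and $P_2$ produces a uniform neighborhood of $\lambda_0$ on which these inclusions are preserved, provided $W$ (and hence the construction of $P$) is chosen sufficiently tight around $\Inv(N,\lambda_0)$.

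Granted that the same $P$ is a weak index pair for every $F_\lambda$ with $\lambda\in\Lambda_0$, I would interpret the restriction $F|_{\Lambda_0\times P}$ as a homotopy, in the class of maps determined by a morphism, from $F_{\lambda_0}\colon P\to T(P)$ to $F_\lambda\colon P\to T(P)$. Homotopy invariance of the induced map in Alexander--Spanier cohomology for this class forces $H^*(F_{P,\lambda})=H^*(F_{P,\lambda_0})$, while $H^*(i_P)$ depends only on the pair $P$; hence $I_{P,\lambda}=I_{P,\lambda_0}$, and applying the Leray functor gives $C(\Inv(N,\lambda),F_\lambda)=C(\Inv(N,\lambda_0),F_{\lambda_0})$. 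Local constancy combined with connectedness of $\Lambda$ then finishes the proof. The main obstacle I anticipate is condition (b), the inclusion $\bd_{F_\lambda}P_1\subset P_2$, whose dependence on $F$ is more subtle than the other conditions: a small perturbation of $F$ could in principle create new points of $F_\lambda(P_1)\setminus P_1$ accumulating on $P_1\setminus P_2$. Overcoming this will likely rely on the particular construction of $P$ in \cite[Theorem 4.12]{BM2016}, where $P_2$ arises from a forward-invariant trapping set with enough ``slack'' to absorb small variations of $F$; alternatively, one may shrink $P_1$ via a regular-pair modification so that any perturbation is absorbed into $P_2$.
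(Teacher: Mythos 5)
There is a genuine gap at the heart of your argument: the claim that the \emph{same} pair $P$ remains a weak index pair for $F_\lambda$ with $\lambda$ near $\lambda_0$ is false in general, and the difficulty is not confined to condition (b) as you suggest --- condition (a) already fails. Condition (a) requires $F_\lambda(P_i)\cap N\subset P_i$, an inclusion into a \emph{closed} set; upper semicontinuity of $F$ in $\lambda$ only guarantees that $F_\lambda(P_i)$ lies in an arbitrarily small neighborhood of $F_{\lambda_0}(P_i)$, so the inclusion persists under perturbation only if $F_{\lambda_0}(P_i)\cap N$ sits in the interior of $P_i$ relative to $N$. For the pairs produced by \cite[Theorem 4.12]{BM2016} this is typically not the case: there $P_1=F_N^+(A)$ is precisely the forward hull of $A$ in $N$, so $F_{\lambda_0}(P_1)\cap N$ generally meets the boundary of $P_1$ in $N$, and an arbitrarily small enlargement of the values of $F$ creates points of $F_\lambda(P_1)\cap N$ outside $P_1$. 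No finite-cover argument can rescue an inclusion that is not strict to begin with.

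The paper's proof avoids this by never reusing $P$ for the perturbed system. Instead, Lemma \ref{lem:nwips} produces a long chain of \emph{nested} weak index pairs $P^1\subset\Int_N P^2\subset\cdots$ for $F_{\lambda_0}$, and Lemma \ref{lem:wip_l} constructs, for each nearby $\lambda$, a \emph{new} weak index pair $P(\lambda)$ adapted to $F_\lambda$ --- built from the $\lambda$-forward orbits $F_{\lambda,N}^+$ of the inner pair --- which is sandwiched between two of the nested pairs. The comparison of indices then proceeds not by asserting $I_{P,\lambda}=I_{P,\lambda_0}$ on one pair, but via inclusion-induced morphisms between $(H^*(P^1),I_{P^1})$, $(H^*(P(\lambda)),I_{P(\lambda)})$ and $(H^*(P^7),I_{P^7})$ (Lemma \ref{lem:morph}), after which the Leray functor and the independence theorem \cite[Theorem 6.4]{BM2016} force the relevant map to be an isomorphism. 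Your closing idea of exploiting the ``slack'' in the construction of \cite[Theorem 4.12]{BM2016} points in the right direction, but the slack is used to fit a new, $\lambda$-dependent pair between nested $\lambda_0$-pairs, not to stabilize a single pair. The local-constancy-plus-connectedness frame and the homotopy argument for the induced maps in cohomology (joining $\mu$ and $\lambda$ by an arc in $\Lambda_0$) do match the paper, but without the nested-pairs machinery the proof does not go through.
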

We postpone the proof of the theorem to the end of this section.

Let us recall that, for any compact $N\subset X$, the mappings $\lambda\mapsto \Inv^{+}(N,\lambda)$, $\lambda\mapsto \Inv^{-}(N,\lambda)$ and $\lambda\mapsto \Inv(N,\lambda)$ are upper semicontinuous on $\Lambda$ (cf. \cite[Lemma 4.2]{KM95}).

Using exactly the same arguments as in the single valued case (cf. \cite[Corollary 7.3]{M90}) one can prove the following lemma.  

\begin{lm}\label{lm:hom_l}
Suppose $N\subset X$ is an isolating neighborhood for $F_{\lambda _0}$. Then $N$ is an isolating neighborhood for $F_{\lambda}$, for $\lambda\in\Lambda$ sufficiently close to $\lambda_0$.
\end{lm}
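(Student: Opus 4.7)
The plan is to reduce the statement to a single application of the upper semicontinuity of the multifunction $\lambda\mapsto\Inv(N,\lambda)$ that has just been recalled from \cite[Lemma 4.2]{KM95}. Since $N$ is a fixed compact set and $\Inv(N,\lambda)\subset N$ for every $\lambda$, the invariant part takes compact values, so the usual form of upper semicontinuity applies: for every open $U\subset X$ containing $\Inv(N,\lambda_0)$ there is a neighborhood $V$ of $\lambda_0$ in $\Lambda$ such that $\Inv(N,\lambda)\subset U$ for all $\lambda\in V$.

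With this in hand, the argument becomes essentially a one-line application. By hypothesis $\Inv(N,\lambda_0)\subset\Int N$, so I would take $U:=\Int N$, which is open and contains $\Inv(N,\lambda_0)$. Upper semicontinuity then immediately produces a neighborhood $V\subset\Lambda$ of $\lambda_0$ such that $\Inv(N,\lambda)\subset\Int N$ for every $\lambda\in V$. Combined with compactness of $N$ (which does not depend on $\lambda$), this is precisely the statement that $N$ is an isolating neighborhood for each $F_\lambda$, $\lambda\in V$.

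The only conceivable obstacle is verifying that the cited upper semicontinuity truly yields the ``open target set'' form of the definition used above; this is standard for compact-valued usc maps, but one can avoid invoking it by arguing directly by contradiction. Namely, assume a sequence $\lambda_n\to\lambda_0$ with points $x_n\in\Inv(N,\lambda_n)\cap\bd N$; pass to a subsequence convergent to some $x_0\in\bd N$ using compactness of $\bd N$; pick bi-infinite solutions $\sigma_n\colon\Z\to N$ for $F_{\lambda_n}$ through $x_n$; and, using compactness of $N$ together with upper semicontinuity of $F$ jointly in $(\lambda,x)$, extract by a diagonal argument a bi-infinite solution of $F_{\lambda_0}$ through $x_0$ contained in $N$. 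Then $x_0\in\Inv(N,\lambda_0)\cap\bd N$, contradicting $\Inv(N,\lambda_0)\subset\Int N$ and finishing the proof.
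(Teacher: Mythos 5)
Your proposal is correct and is essentially the paper's intended argument: the paper proves this lemma by invoking the upper semicontinuity of $\lambda\mapsto\Inv(N,\lambda)$ (recalled just before the statement from \cite[Lemma 4.2]{KM95}) exactly as in the single-valued case of \cite[Corollary 7.3]{M90}, which is your one-line application with $U=\Int N$. The direct diagonal-extraction argument you sketch as a fallback is a sound elaboration of why that upper semicontinuity holds, but it is not needed beyond the cited lemma.
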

\cite[Theorem 4.12]{BM2016} states that 
any isolating neighborhood $N$ admits a weak index pair. Since we will use it frequently, for convenience we quote it here in a slightly modified, adapted to our needs form. We use the following notation. Given $N\subset X$ and an interval $I$ in $\ZZ$ set
\[
   \Sol(N,F_\lambda,I):=\setof{\sigma:I\to N\text{ a solution  for $F_\lambda$}}
\]
and for $x\in N$ and $n\in\Z^+$ define
\begin{eqnarray*}
F_{\lambda,N,n}(x)&:=& \{y\in N\mid \exists\,\sigma\in\Sol(N,F_\lambda,[0,n]):\,\sigma(0)=x,\sigma(n)=y\},\\
F_{\lambda,N,-n}(x)&:=&\{y\in N\mid \exists\,\sigma\in\Sol(N,F_\lambda,[-n,0]):\,\sigma(-n)=y,\sigma(0)=x\}, \\
F_{\lambda,N} ^+(x)&:=&\bigcup_{n\in\z^+}F_{\lambda,N,n}(x),\\
F_{\lambda,N} ^-(x)&:=&\bigcup_{n\in\z^+}F_{\lambda,N,-n}(x).
\end{eqnarray*}
\begin{thm}\label{thm:ex_wipBM}
Let $N$ be an isolating neighborhood for $F$ and let $U$ and $V$ be open neighborhoods of $\Inv^{+}N$ and $\Inv^{-}N$, respectively, with $U\cap V\subset \Int N$. Then, there exists a compact neighborhood $A$ of $\Inv^{-}N$ such that $P_1:=F_N ^+(A)\subset V$. Moreover, $P:=(P_1,P_2)$ where $P_2:=F_N ^+(P_1\setminus U)$ is a weak index pair in $N$ and $P_1\setminus P_2\subset U$. {\rm (see \cite[Theorem 4.12]{BM2016} and its proof)}.
\end{thm}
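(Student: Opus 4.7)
The plan is to proceed in two stages: first establish the existence of the compact neighborhood $A$ of $\Inv^-N$ with $F_N^+(A)\subset V$, then verify the four weak index pair axioms together with the concluding inclusion $P_1\setminus P_2\subset U$.

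For the existence of $A$, I would argue by contradiction. Failing the conclusion, pick a descending sequence $A_k$ of compact neighborhoods of $\Inv^-N$ with $\bigcap_k A_k=\Inv^-N$ together with witnesses $y_k\in F_N^+(A_k)\setminus V$ realized by solutions $\sigma_k:[0,n_k]\to N$ satisfying $\sigma_k(0)\in A_k$ and $\sigma_k(n_k)=y_k$. Compactness of $N\setminus V$ permits passage to a subsequence with $y_k\to y\in N\setminus V$. If $\{n_k\}$ is bounded, reduce to a constant subsequence and invoke the closed graph of $F$ (usc with compact values) to obtain a finite forward solution in $N$ from a limit point $x\in\Inv^-N$ to $y$; concatenating with the backward-infinite trajectory of $x$ yields $y\in\Inv^-N\subset V$, a contradiction. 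If $n_k\to\infty$, a diagonal extraction on the backward segments $\{\sigma_k(n_k-i)\}_{i\geq 0}$ produces a backward-infinite solution from $y$ in $N$, again giving $y\in\Inv^-N\subset V$. The same dichotomy shows that $P_1=F_N^+(A)$ is closed in $N$ (limits of trajectories of bounded length are reached by finite trajectories; limits of trajectories of unbounded length lie in $\Inv^-N\subset A\subset P_1$), hence compact; the analogous argument with $P_1\setminus U$ in place of $A$ gives compactness of $P_2$.

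Axiom (a) is immediate from the definition of $F_N^+$. Taking zero-length trajectories gives $P_1\setminus U\subset P_2$, whence $P_1\setminus P_2\subset U$; combined with $P_1\subset V$ this yields both the final inclusion and axiom (d), namely $P_1\setminus P_2\subset U\cap V\subset\Int N$. For axiom (b), axiom (a) implies $F(P_1)\setminus P_1\subset X\setminus N$, hence $\bd_F P_1\subset P_1\cap(X\setminus\Int N)$; any such point lies in $V$ (since $P_1\subset V$) but cannot lie in $U$ (else it would belong to $U\cap V\subset\Int N$), so it belongs to $P_1\setminus U\subset P_2$.

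Axiom (c) is the delicate part. The chain $\Inv N\subset\Inv^-N\subset\Int A\subset\Int P_1$ handles the ``contained in $P_1$'' half. The key observation for disjointness from $P_2$ is a concatenation trick: for any $x\in\Inv N\cap P_2$ with witness $y\in P_1\setminus U$ and a finite forward solution $y\to\cdots\to x$ in $N$, splicing with the forward-infinite trajectory of $x$ (available since $x\in\Inv^+N$) would furnish a forward-infinite solution in $N$ based at $y$, forcing $y\in\Inv^+N\subset U$ and contradicting $y\notin U$; hence $\Inv N\cap P_2=\emptyset$. Compactness of $\Inv N$ and $P_2$ in a metric space then separates them by disjoint open neighborhoods, and intersecting a neighborhood of $\Inv N$ avoiding $P_2$ with $\Int A$ yields an open neighborhood of $\Inv N$ contained in $P_1\setminus P_2$. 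The principal obstacle is Stage 1, the compactness-of-trajectories argument supplying $A$; once that is in place, verification of the axioms reduces to clean set-theoretic manipulations, the only nontrivial ingredient being the concatenation trick above.
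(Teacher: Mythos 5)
Your outline follows the same route as the proof of \cite[Theorem 4.12]{BM2016} that the paper cites for this statement: the contradiction/diagonal-extraction argument producing $A$, the observation that zero-length solutions give $P_1\setminus U\subset P_2$ (hence (d) and the final inclusion), the computation $F(P_1)\setminus P_1\subset X\setminus N$ for (b), and the concatenation-plus-separation argument for (c) are all the standard steps. There is, however, one step whose justification as written would fail: the compactness of $P_2=F_N^+(P_1\setminus U)$. Your dichotomy for $P_1$ disposes of the unbounded-length case by placing the limit in $\Inv^-N\subset A\subset P_1$; transplanted verbatim to $P_2$ this would require $\Inv^-N\subset P_2$, which is false whenever $\Inv N\neq\emptyset$ --- indeed your own argument for (c) proves $\Inv N\cap P_2=\emptyset$ while $\Inv N\subset\Inv^-N$. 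The correct resolution is that the unbounded case simply cannot occur for $P_2$: if witnesses $\sigma_k:[0,n_k]\to N$ with $\sigma_k(0)\in P_1\setminus U$ had $n_k\to\infty$, a \emph{forward} diagonal extraction anchored at the initial points would yield a forward-infinite solution in $N$ through a limit point $z$ of the $\sigma_k(0)$, forcing $z\in\Inv^+N\subset U$ while $z\in P_1\setminus U$. This is precisely the role of \cite[Lemmas 2.9 and 2.10]{KM95}, which the present paper invokes for the identical purpose in the proof of Lemma \ref{lem:wip_l} (compactness of $P_2(\lambda)$ from $\Inv^+(N,\lambda)\cap Q_2=\emptyset$).

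With that repair the proof is complete. A cosmetic point you may wish to tidy: since $\Inv^-N$ need not lie in $\Int N$, the chain $\Inv^-N\subset\Int A\subset\Int P_1$ should be read after intersecting with $\Int N$ (one has $\Int_X A\cap\Int N\subset A\cap N\subset F_N^+(A)$, and $\Inv N\subset\Int N$), which is all that axiom (c) requires.
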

We need the following lemma.
\begin{lm}\label{lem:nwips}
If $N$ is an isolating neighborhood with respect to $F$ then, for any $n\in\N$, there exist weak index pairs $P^1,\dots, P^n$ in $N$ such that $P^i\subset\Int_N P^{i+1}$ for $i=1,\dots,n-1$.  
\end{lm}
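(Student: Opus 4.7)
The plan is to proceed by induction on $n$, with the base case $n=1$ supplied by Theorem~\ref{thm:ex_wipBM}.  I would produce the whole chain of $n$ weak index pairs by $n$ parallel applications of the construction of that theorem, using a common open neighborhood $U$ of $\Inv^{+}N$ throughout, together with a strictly increasing chain of compact neighborhoods $A^1\subset\Int_N A^2\subset\cdots\subset\Int_N A^n$ of $\Inv^{-}N$, and setting
\[
   P^i:=\bigl(F_N^+(A^i),\,F_N^+(F_N^+(A^i)\setminus U)\bigr).
\]

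The inductive step from $k$ to $k+1$ then proceeds as follows.  Given $A^1,\ldots,A^k$ already built, $P^k_1=F_N^+(A^k)$ is a compact subset of $N$ containing $\Inv^{-}N$, so I may pick a compact $A^{k+1}$ with $P^k_1\subset\Int_N A^{k+1}\subset N$; this is automatically a compact neighborhood of $\Inv^{-}N$.  Using $\Inv N\subset\Int N$ and upper semicontinuity of $F$, I then choose an open neighborhood $V^{k+1}\supset F_N^+(A^{k+1})$ of $\Inv^{-}N$ with $U\cap V^{k+1}\subset\Int N$; keeping $U$ small near $\bd N$ from the outset is what allows this intersection condition to be arranged uniformly in $k$.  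Rerunning the construction of Theorem~\ref{thm:ex_wipBM} with this specific $A^{k+1}$ and $V^{k+1}$ (which requires only that $A^{k+1}$ be a compact neighborhood of $\Inv^{-}N$ with $F_N^+(A^{k+1})\subset V^{k+1}$) shows that the pair $P^{k+1}$ displayed above is a weak index pair in $N$.  The first half of the nesting follows at once from
\[
   P^k_1\subset\Int_N A^{k+1}\subset \Int_N F_N^+(A^{k+1})=\Int_N P^{k+1}_1.
\]

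The main obstacle is the analogous interior containment $P^k_2\subset\Int_N P^{k+1}_2$, because monotonicity of the forward-saturation operator $F_N^+$ under a common $U$ gives only the set-theoretic containment $P^k_2\subset P^{k+1}_2$, and in general $F_N^+$ does not preserve interior containment.  My plan to overcome this is to refine the construction by replacing each $A^i$ with its forward saturation $F_N^+(A^i)$, which is still a compact neighborhood of $\Inv^{-}N$ and is now \emph{positively invariant} in $N$, so that $P^i_1=A^i$ coincides with the seed.  With this refinement, the difference between $P^{i+1}_2$ and $P^i_2$ is controlled directly by $A^{i+1}\setminus A^i$ and its forward images in $N\setminus U$, and the interior containment $A^i\subset\Int_N A^{i+1}$ combined with the positive invariance of $P^{i+1}_2$ and upper semicontinuity of $F$ should yield the required $P^i_2\subset\Int_N P^{i+1}_2$.
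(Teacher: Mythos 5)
Your plan correctly isolates the hard point, but the fix you propose for it does not work, and the paper's proof uses a mechanism you have excluded by design. With a \emph{fixed} neighborhood $U$ of $\Inv^{+}N$, the second components $P^i_2=F_N^+(P^i_1\setminus U)$ cannot in general be nested with interior containments, no matter how the seeds $A^i$ are enlarged or saturated. The obstruction is not cured by positive invariance: a point $x\in P^i_2$ always has nearby points of $N$ lying in $U$ (whenever $x\in\cl U$, and forward images of the seed do enter $U$), and such points belong to $P^{i+1}_2$ only if they are reachable inside $N$ from $P^{i+1}_1\setminus U$ --- which nothing guarantees. A concrete failure: $X=\R$, $F(x)=\{2x\}$, $N=[-1,1]$, so $\Inv^+N=\{0\}$, $\Inv^-N=N$, and with $U=(-\e,\e)$ every admissible choice of $A$ gives $P_1=N$ and $P_2=[-1,-\e]\cup[\e,1]$; the point $\e$ lies in $P^i_2$ but never in $\Int_N P^{i+1}_2$ as long as $U$ is unchanged. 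This is exactly why the paper does \emph{not} keep $U$ fixed: it builds a \emph{decreasing} sequence $U_1\supset U_2\supset\cdots$ of open neighborhoods of $\Inv^{+}N$, chosen so that the seed $P_1\setminus U_{k+1}$ is a compact neighborhood in $P_1$ of $Q_2^k=F_N^+(P_1\setminus U_k)$; this is what forces $Q_2^k\subset\Int_{P_1}Q_2^{k+1}$, with the first component held constant. The nesting of first components is then obtained by a separate, \emph{downward} recursion (each new $P_1^{i}$ is placed inside $\Int_N P_1^{i+1}$ via Theorem~\ref{thm:ex_wipBM} with a smaller $V$), and the two constructions are combined by setting $P_2^i:=Q_2^i\cap P_1^i$, invoking \cite[Lemma 5.4]{BM2016} to see that the intersected pairs are still weak index pairs.

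A secondary problem with your upward recursion on the first components: Theorem~\ref{thm:ex_wipBM} produces $A$ small enough that $F_N^+(A)\subset V$ with $U\cap V\subset\Int N$; if you instead prescribe a large $A^{k+1}$ containing $P^k_1$ in its interior, you must then find an open $V^{k+1}\supset F_N^+(A^{k+1})$ with $U\cap V^{k+1}\subset\Int N$, which forces $U\cap F_N^+(A^{k+1})\cap\bd N=\emptyset$. Since $\Inv^{+}N$ may meet $\bd N$, you cannot simply take ``$U$ small near $\bd N$'', and without this condition property (d) of a weak index pair can fail for the enlarged pair. The paper avoids the issue entirely by growing the chain downward from the initial pair rather than upward.
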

\begin{proof} 
Take open neighborhoods $U$ and $V$ of $\Inv^{+}N$ and $\Inv^{-}N$, respectively, with $U\cap V\subset \Int N$. By Theorem \ref{thm:ex_wipBM} there exists a compact neighborhood $A$ of $\Inv^{-}N$ such that the sets $P_1:=F_N ^+(A)$ and $P_2:=F_N ^+(P_1 ^n\setminus U)$ form a weak index pair $(P_1,P_2)$ in $N$. Moreover, $P_1\subset V$ and $P_1\setminus P_2\subset U$.

We shall define, recurrently, a sequence of compact sets $Q_2 ^i$ and a corresponding descending sequence $U_i$ of open neighborhoods of $\Inv^{+}N$, for $i=1,2\dots ,n$, such that
\begin{equation}\label{ink_1}
\left\{
\begin{array}{rl}
\mbox{(i)}&Q_2 ^{i}=F_N ^+(P_1\setminus U_{i})\for i=1,2,\dots ,n,\\[1ex]
\mbox{(ii)}&Q_2 ^{i-1}\subset\Int_{P_1}Q_2 ^{i}\for i=2,\dots ,n,\\[1ex]
\mbox{(iii)}&(P_1,Q_2 ^{i})\;\mbox{ is a weak index pair in }N \mbox{ for }i=1,2,\dots ,n. 
\end{array}
\right.
\end{equation} 
Put $U_1:=U$ and $Q_2 ^1:=P_2$ and observe that (\ref{ink_1}) is satisfied for $i=1$. Next, fix $k\in\{1,2,\dots,n-1\}$ and suppose that the sequences $Q_2 ^i$ and $U_i$ satisfying (\ref{ink_1}) are defined for $i=1,2,\dots, k$. Observe that there exists a neighborhood $W_k$ of $Q_2 ^k$ open in $N$ such that
$$
Q_2 ^k\subset W_k\subset\Cl W_k\subset N\setminus\Inv ^+N
$$
and 
$$
N\setminus U_k\subset\cl W_k.
$$
We set $U_{k+1}:=N\setminus\cl W_k$. Then $U_{k+1}$ is an open neighborhood of $\Inv^{+}N$, and $U_{k+1}\subset U_{k}$. We have $U_{k+1}\cap V\subset U_{k}\cap V\subset U\cap V\subset\Int N$. We define $Q_2 ^{k+1}:=F_N ^+(P_1\setminus U_{k+1})$. Then, by Theorem \ref{thm:ex_wipBM}, $(P_1,Q_2 ^{k+1})$ is a weak index pair in $N$. It remains to verify that inclusion (\ref{ink_1})(ii) holds for $i=k+1$. Indeed, we have 
$$
Q_2 ^k\subset W_k\cap P_1\subset\cl W_k\cap P_1\subset Q_2 ^{k+1};
$$
hence, inclusion (\ref{ink_1}) for $i=k+1$ follows.

We shall prove that there exists a sequence $(P_1 ^i, R_2 ^i)$, for $i=1,2,\dots, n$, of weak index pairs such that
\begin{equation}\label{ink_2}
P_1 ^i\subset \Int _N P_1 ^{i+1}\for i=1,2\dots,n-1,
\end{equation}
and 
\begin{equation}\label{ink_31}
R_2 ^i=F_N ^+(P_1 ^{i}\setminus U).
\end{equation}
Indeed, we define $(P_1 ^n,R_2^n):=(P_1,Q_2 ^1)$. Choose $V_n$, an open set such that $V_n\cap N=\Int _N P_1 ^{n}$. Without loss of generality we may assume that $(V_n\cap U)\setminus N=\emptyset$. Observe that then $V_n$ is an open neighborhood of $\Inv^- N$, and $V_n\cap U\subset V\cap U\subset \Int N$. Therefore, applying Theorem \ref{thm:ex_wipBM} we construct a weak index pair $(P_1 ^{n-1}, R_2 ^{n-1})$ such that $P_1 ^{n-1}\subset V_n\cap N=\Int _N P_1 ^{n}$ and $R_2 ^{n-1}=F_N ^+(P_1 ^{n-1}\setminus U)$. By the reverse recurrence we are
done.

Finally we put 
$$
P_2 ^i:=Q_2 ^i\cap P_1 ^i\for i=1,2,\dots,n.
$$
By (\ref{ink_31}) and (\ref{ink_1}), for each $i=1,2,\dots,n$, we have 
$R_2 ^i=F_N ^+(P_1 ^{i}\setminus U)\subset F_N ^+(P_1 \setminus U)=Q_2 ^1\subset Q_2 ^i$; hence,  according to (\ref{ink_2}), we have the inclusion $(P_1 ^i,R_2 ^i)\subset (P_1 ^n, Q_2 ^i)$ of weak index pairs. Therefore, by \cite[Lemma 5.4]{BM2016}, we infer that $(P_1 ^i,P_2 ^i)$ is a weak index pair. 

By (\ref{ink_1}) and (\ref{ink_2}), for each $i=1,2,\dots,n$, we have the following inclusions
$$
P_2 ^i=Q_2 ^i\cap P_1 ^i\subset\Int _{P_1 ^n}Q_2 ^{i+1}\cap \Int_NP_1 ^{i+1}\subset Q_2 ^{i+1}\cap P_1 ^{i+1}=P_2 ^{i+1}, 
$$
showing that $P_2 ^i\subset \Int_N P_2^{i+1}$. This, along with (\ref{ink_2}), completes the proof.
\qed\end{proof}
\begin{lm}\label{lem:wip_l}
Assume that $N$ is an isolating neighborhood with respect to $F_\mu$ for some $\mu\in\Lambda$ and $P,Q,R$ are weak index pairs with respect to $F_\mu$ such that $P\subset\Int_N Q$, $Q\subset\Int_N R$. Then there exists $\Lambda_0$, a neighborhood of $\mu$ in $\Lambda$, such that for every $\lambda\in\Lambda_0$ there exists a weak index pair $P(\lambda)$ with respect to $F_\lambda$ satisfying $P\subset P(\lambda)\subset R$.
\end{lm}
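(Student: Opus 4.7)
The natural candidate for the continued weak index pair is the positive invariant hull of $Q$ in $N$ under the perturbed map $F_\lambda$; the two layers of nesting ($P\subset\Int_N Q$ and $Q\subset\Int_N R$) are what supply the room needed for $P \subset P(\lambda)$ and $P(\lambda) \subset R$ respectively. Concretely, the plan is to set
\[
P_1(\lambda) := F_{\lambda, N}^{+}(Q_1), \qquad P_2(\lambda) := F_{\lambda, N}^{+}\bigl(Q_2 \cup (P_1(\lambda) \cap \bd N)\bigr).
\]
Both are compact subsets of $N$, positively invariant in $N$ under $F_\lambda$ by construction, and contain $P_i$ because $P_i \subset Q_i$, so the lower inclusion $P \subset P(\lambda)$ is automatic.

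First, by Lemma \ref{lm:hom_l} I restrict to a neighborhood $\Lambda_1$ of $\mu$ on which $N$ is isolating for every $F_\lambda$. The heart of the argument is shrinking $\Lambda_1$ to some $\Lambda_0$ so that $P_i(\lambda) \subset R_i$ for every $\lambda \in \Lambda_0$ and $i=1,2$. I would argue this by contradiction: suppose $\lambda_k \to \mu$ and finite minimal solutions $\sigma_k : [0,n_k] \to N$ for $F_{\lambda_k}$ with $\sigma_k(0) \in Q_i$ and $\sigma_k(n_k) \in N \setminus R_i$ exist. If $\{n_k\}$ stays bounded, a subsequential limit extracted using compactness of $N$ and upper semicontinuity of $F$ jointly in $(\lambda,x)$ yields a solution $\sigma_\infty$ for $F_\mu$ that, by positive invariance of $Q_i$ under $F_\mu$, stays in $Q_i \subset \Int_N R_i$, contradicting $\sigma_\infty(n) = \lim \sigma_k(n) \in \cl(N\setminus R_i) = N \setminus \Int_N R_i$. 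If $n_k \to \infty$, a diagonal extraction applied to the tails $\tau_k(j) := \sigma_k(n_k+j)$ with $j \leq 0$ produces a full negative solution $\tau_\infty : \Z^- \to R_i$ for $F_\mu$ whose endpoint lies in $\bd_N R_i$. Positive invariance of $R_i$ under $F_\mu$ then allows one to extend $\tau_\infty$ forward inside $R_i$, and the innermost nesting $P\subset\Int_N Q$ (together with axiom (c) applied to the weak index pair $R$) places $\tau_\infty(0) \in \Inv(N,\mu) \subset \Int_N(R_1 \setminus R_2) \subset \Int_N R_i$, contradicting $\tau_\infty(0) \in \bd_N R_i$.

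Once $P_i(\lambda) \subset R_i$ is established, verifying the weak index pair axioms for $P(\lambda)$ is routine. Axiom (a) is built in. Since $P_1(\lambda)$ is positively invariant in $N$ under $F_\lambda$, one has $F_\lambda(P_1(\lambda)) \setminus P_1(\lambda) \subset X \setminus N$; hence $\bd_{F_\lambda} P_1(\lambda) \subset P_1(\lambda) \cap \bd N \subset P_2(\lambda)$, giving axiom (b) and implying axiom (d) directly. Axiom (c), $\Inv(N,\lambda) \subset \Int(P_1(\lambda) \setminus P_2(\lambda))$, follows from upper semicontinuity of $\Inv(N,\cdot)$ at $\mu$ combined with $\Inv(N,\mu) \subset \Int(P_1 \setminus P_2)$, after possibly shrinking $\Lambda_0$ once more so that $\Inv(N,\lambda)$ stays inside a fixed open neighborhood of $\Inv(N,\mu)$ that is disjoint from $P_2(\lambda)$.

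The main obstacle is the $n_k \to \infty$ case in the contradiction argument: a one-step perturbation argument only delivers $F_\lambda(Q_1) \cap N \subset R_1$, and naive iteration would demand that $R_1$ be positively invariant also under $F_\lambda$, which is not part of the data. The three-level hypothesis $P \subset \Int_N Q \subset Q \subset \Int_N R$ is precisely what lets one close off this contradiction by supplying enough buffer to force the limiting long-orbit endpoints into $\Int_N R_i$ via the invariance characterization in axiom (c) of a weak index pair.
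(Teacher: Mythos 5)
Your overall strategy --- take forward hulls under $F_\lambda$ of sets one level down in the nesting and use a perturbation argument to confine them one level up --- is the right one and is essentially the paper's (the paper takes $P_1(\lambda):=F^+_{\lambda,N}(P_1)\cup F^+_{\lambda,N}(Q_2)$ and $P_2(\lambda):=F^+_{\lambda,N}(Q_2)$, so the lower inclusion comes from the innermost pair and the upper from the middle one). But the execution has a genuine gap exactly at the point you flag as the main obstacle, and your proposed resolution does not work. In the case $n_k\to\infty$ your diagonal limit produces a backward solution $\tau_\infty:\Z^-\to N$ for $F_\mu$ with $\tau_\infty(j)\in R_i$ for $j<0$ and $\tau_\infty(0)\notin\Int_N R_i$; you then conclude $\tau_\infty(0)\in\Inv(N,\mu)$ by ``extending $\tau_\infty$ forward inside $R_i$ using positive invariance.'' Positive invariance of $R_i$ in $N$ only says $F_\mu(R_i)\cap N\subset R_i$; it does not guarantee that $F_\mu(\tau_\infty(0))\cap N$ is nonempty, so no forward solution in $N$ need exist, and $\tau_\infty(0)$ need only lie in $\Inv^-(N,\mu)$, not in $\Inv(N,\mu)$. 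Axiom (c) for $R$ then gives you nothing. The paper closes this case by a different mechanism: it first secures, for $\lambda$ in a compact neighbourhood $\Delta$ of $\mu$, the conditions $\Inv^-(N,\lambda)\subset P_1$ and $\Inv^+(N,\lambda)\cap Q_2=\emptyset$ (upper semicontinuity of $\lambda\mapsto\Inv^{\pm}(N,\lambda)$), and then invokes the joint upper semicontinuity and compactness of the values of $(x,\lambda)\mapsto F^{\mp}_{\lambda,N}(x)$ for the extended system $G(\lambda,x)=(\lambda,F_\lambda(x))$ on $N\times\Delta$ (\cite[Lemma 2.10]{KM95}); arbitrarily long orbits are already encoded in the values of these maps, so no limit of orbit segments is ever taken. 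The same two conditions are what give compactness of the hulls $F^+_{\lambda,N}(\cdot)$, which you assert ``by construction'' --- it is not automatic, these being infinite unions.

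A second, smaller problem is your modification $P_2(\lambda):=F^+_{\lambda,N}\bigl(Q_2\cup(P_1(\lambda)\cap\bd N)\bigr)$. It does make (b) and (d) immediate, but it endangers (c): a point of $\Inv(N,\lambda)$ can perfectly well be reachable inside $N$ from a point of $\Inv^+(N,\lambda)\cap\bd N$ (isolation only forbids $\Inv(N,\lambda)$ itself from touching $\bd N$), so the forward hull of the exit set may meet, or accumulate on, the invariant part. Your fix --- shrink $\Lambda_0$ so that $\Inv(N,\lambda)$ stays in a fixed neighbourhood disjoint from $P_2(\lambda)$ --- presupposes a separation of $P_2(\lambda)$ from $\Inv(N,\mu)$ uniform in $\lambda$ that you have not established and that would itself require the same usc machinery as the main containment. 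The paper sidesteps this by taking $P_2(\lambda)=F^+_{\lambda,N}(Q_2)$, obtaining (c) from the nesting $P_2(\lambda)\subset R_2$ together with $\Inv(N,\lambda)\subset\Int(P_1\setminus R_2)$, and proving (b) from (a) and (d) by the exit-through-the-boundary argument you also give.
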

\begin{proof}
Put $P_1 '(\lambda):=F_{\lambda,N} ^+(P_1)$, $P_2(\lambda):=F_{\lambda,N} ^+(Q_2)$ and $P_1 (\lambda):=P_1 '(\lambda)\cup P_2(\lambda)$. We will prove that for $\lambda$ sufficiently close to $\mu$ the pair $P(\lambda):=(P_1(\lambda),P_2(\lambda))$ satisfies the assertions of the lemma.

Using similar reasoning as in the proof of \cite[Lemma 7.4]{M90}, which we present here for the sake of completeness, one can show that there exists $\Lambda_1$, a neighborhood of $\mu$ in $\Lambda$ such that for every $\lambda\in\Lambda_1$ we have 
\begin{equation}\label{inv-lambda}
\Inv^-(N,\lambda)\subset P_1,
\end{equation}
\begin{equation}\label{inv+lambda}
\Inv^+(N,\lambda)\cap Q_2=\emptyset,
\end{equation}
\begin{equation}\label{ink4}
P_1 '(\lambda)\subset Q_1\mbox{ and }P_2(\lambda)\subset R_2.
\end{equation}
Put $Z:=N\setminus \Int_N Q_1$. By \cite[Lemma 4.2]{KM95} and Lemma \ref{lm:hom_l} one can find a compact neighborhood $\Delta$ of $\mu$ in $\Lambda$ such that, for any $\lambda\in\Delta$, $N$ is an isolating neighborhood with respect to $F_\lambda$ and properties (\ref{inv-lambda}), (\ref{inv+lambda}) hold. Since $P\subset \Int _N Q$, we have
\begin{equation}
\Inv^-(N,\lambda)\cap Z=\emptyset\mbox{ and }
\Inv^+(N,\lambda)\cap P_2=\emptyset\for \lambda\in \Delta.
\end{equation}
Define $G:\Delta\times X\ni (\lambda,x)\mapsto (\lambda, F(\lambda,x))\subset \Delta\times X$ and $M:=N\times \Delta$. One can verify that $M$ is an isolating neighborhood with respect to $G$ and
$$
\begin{array}{l}
\Inv ^-(M,G)\cap (Z\times \Delta)=\bigcup\{\Inv^-(N,\lambda)\cap (Z\times \{\lambda\})\mid\lambda\in\Lambda\}=\emptyset,\\[1ex]
\Inv ^+(M,G)\cap (P_2\times \Delta)=\bigcup\{\Inv^+(N,\lambda)\cap (P_2\times \{\lambda\})\mid\lambda\in\Lambda\}=\emptyset.
\end{array}
$$
Let $x\in Z$. Then $F_{\mu,N} ^-(x)\cap P_1=\emptyset$ and $(x,\mu)\in Z\times\Delta$, i.e. $G_M ^-(x,\mu)\cap (P_1\times\Delta)=\emptyset$. By \cite[Lemma 2.10 (b)]{KM95}, $G_M ^-$ is upper semicontinuous. Hence, there exist an open neighborhood $V_x$ of $x$ in $N$ and $\Delta_x$ of $\mu$ in $\Delta$ such that
$$
G_M ^-(y,\lambda)\cap (P_1\times\Delta)=\emptyset\for y\in V_x, \lambda\in \Delta_x,
$$  
i.e.
$$
F_{\lambda,N} ^-(y)\cap P_1=\emptyset\for y\in V_x, \lambda\in \Delta_x.
$$
Since $Z$ is compact, there exists a finite subset $Z_0\subset Z$ such that $Z\subset \bigcup\{V_x\mid x\in Z_0\}$. Put $\Delta_0:=\bigcap\{\Delta_x\mid x\in Z_0\}$. Then $\Delta_0$ is a neighborhood of $\mu$ and $F_{\lambda,N} ^-(y)\cap P_1=\emptyset$ for $y\in Z$ and $\lambda\in \Delta_0$. This shows that $P_1 '(\lambda)=F_{\lambda,N} ^+(P_1)\subset N\setminus Z=\Int _N Q_1\subset Q_1$ for $\lambda\in \Delta_0$.

Now take $x\in Q_2$. Then $F_{\mu,N} ^+(x)\subset Q_2\subset\Int R_2$. Similar reasoning using compactness of $Q_2$ and the mapping $G$ shows that there exists a neighborhood $\Delta_1$ of 
$\mu$ such that $F_{\lambda,N} ^+(y)\subset\Int R_2$ for $y\in Q_2$ and $\lambda\in \Delta_1$. Thus $P_2(\lambda)=F_{\lambda,N}^+(Q_2)\subset R_2$ for $\lambda\in \Delta_1$.

In conclusion, conditions (\ref{inv-lambda}), (\ref{inv+lambda}) and (\ref{ink4}) hold for every $\lambda\in \Lambda_1:=\Delta\cap\Delta_0\cap\Delta_1$.  

By \cite[Lemma 5.4]{BM2016}, $(P_1\cup R_2,R_2)$ is a weak index pair in $N$ with respect to $F_\mu$, hence $\Inv(N,\mu)\subset\Int(P_1\setminus R_2)$. By the upper semicontinuity of $\lambda\mapsto \Inv(N,\lambda)$, there exists $\Lambda_2$, a neighborhood of $\mu$ in $\Lambda$ such that for every $\lambda\in \Lambda _2$ we have
\begin{equation}\label{ink3}
\Inv(N,\lambda)\subset\Int(P_1\setminus R_2).
\end{equation} 

We shall prove that $P(\lambda)$ is a weak index pair in $N$ with respect to $F_\lambda$ for every $\lambda\in\Lambda_0:=\Lambda_1\cap\Lambda_2$ . 

The compactness of $P_2(\lambda)$ follows from (\ref{inv+lambda}) and \cite[Lemma 2.10]{KM95}. By (\ref{inv-lambda}) and \cite[Lemma 2.9]{KM95}, $P_1 '(\lambda)$ is compact, hence so is $P_1 (\lambda)$. The inclusion $P_2 (\lambda)\subset P_1 (\lambda)$ is obvious. 

The positive invariance of $P(\lambda)$ in $N$ (property (a)) with respect to $F_\lambda$ is straightforward. 

By (\ref{ink3}) we have $\Inv(N,\lambda)\subset\Int(P_1\setminus R_2)\subset\Int(P_1(\lambda)\setminus P_2(\lambda))$ which means that condition (c) holds.

We have $P_1(\lambda)\setminus P_2(\lambda)=P_1 '(\lambda)\setminus P_2(\lambda)\subset  Q_1\setminus P_2(\lambda)\subset Q_1\setminus Q_2\subset\Int N$, thus condition (d) is verified.    

We need to verify condition (b). Suppose the contrary and consider $x\in \bd _{F_\lambda}P_1(\lambda)\setminus P_2(\lambda)$. By property (d) of $P(\lambda)$ we have $x\in\Int N$. Since $x\in\cl (F_\lambda(P_1(\lambda ))\setminus P_1(\lambda))$, there exists $y\in (F_\lambda(P_1(\lambda ))\setminus P_1(\lambda))\cap\Int N$. Take $u\in P_1(\lambda)$ such that $y\in F_\lambda(u)$. If $u\in P_1 '(\lambda)=F_{\lambda,N} ^+(P_1)$ then there exists a solution $\sigma_\lambda:[0,n]\to N$ with respect to $F_\lambda$ such that $\sigma_\lambda (0)\in P_1$ and $\sigma_\lambda (n)=u$. Since $y\in F_\lambda (u)\cap N$, we can put $\sigma_\lambda (n+1):=y$ to infer that $y\in P_1 '(\lambda)$. This yields $y\in P_1(\lambda)$, a contradiction. In the case where $u\in P_2 (\lambda)=F_{\lambda,N} ^+(Q_2)$,  there exists a solution $\sigma_\lambda:[0,n]\to N$ such that $\sigma_\lambda (0)\in Q_2$ and $\sigma_\lambda (n)=u$. We can extend $\sigma_\lambda$ putting $\sigma_\lambda (n+1):=y$. Since $y\in F_\lambda (u)\cap N$, we obtain $y\in P_2 (\lambda)$ and, consequently, $y\in P_1(\lambda)$, a contradiction.  
 
Finally, inclusions (\ref{ink4}) guarantee that $P\subset P(\lambda)\subset R$ for $\lambda\in\Lambda_0$.
\qed\end{proof}
\begin{lm}\label{lem:morph}
Assume that $N$ is an isolating neighborhood with respect to $F_\mu$ for some $\mu\in\Lambda$ and $P^1,\dots,P^7$ are weak index pairs with respect to $F_\mu$ such that $P^i\subset\Int P^{i+1}$, $i=1,\dots,6$. Then there exists $\Lambda_0$, a neighborhood of $\mu$ in $\Lambda$, such that for every $\kappa\in\Lambda_0$ there exists a weak index pair $P(\kappa)$ with respect to $F_\kappa$ satisfying $P^1\subset P(\kappa)\subset P^7$ and such that the inclusions
$$
i:P^1\to P(\lambda),\mbox{\hspace{1cm}} j:P(\lambda)\to P^7 
$$
induce morphisms in the category of endomorphisms
$$
i^*:H^*(P(\kappa),I_{P(\kappa)})\to H^*(P^1,I_{P^1}), 
$$
$$
j^*:H^*(P^7,I_{P^7})\to H^*(P(\kappa),I_{P(\kappa)}). 
$$
\end{lm}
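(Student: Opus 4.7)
The plan is to construct $P(\kappa)$ by applying Lemma~\ref{lem:wip_l} to the middle triple $(P^3,P^4,P^5)$, obtaining on some neighborhood $\Lambda_1$ of $\mu$ a weak index pair $P(\kappa)$ for $F_\kappa$ with $P^3\subset P(\kappa)\subset P^5$, so that the nested chain $P^1\subset P^3\subset P(\kappa)\subset P^5\subset P^7$ supplies the required inclusions $i$ and $j$. The outer pairs $P^1,P^2$ (respectively $P^6,P^7$) play the role of a ``cushion'' which, via the upper semicontinuity of $F$ on $\Lambda\times X$, will guarantee $F_\lambda(P^1_k)\cap\Int N\subset P^2_k\subset P(\kappa)_k$ and $F_\lambda(P(\kappa)_k)\cap\Int N\subset P^6_k\subset P^7_k$ for $k\in\{1,2\}$ and every $\lambda$ in a sufficiently small neighborhood $\Lambda_0\subset\Lambda_1$ of $\mu$. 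The standard justification: the open set $V_k:=(X\setminus N)\cup\Int P^2_k$ contains the compact $F_\mu(P^1_k)$ by property (a) of the weak index pair $P^1$; compactness of $P^1_k$ together with upper semicontinuity of $F$ then force $F_\lambda(P^1_k)\subset V_k$ for $\lambda$ close to $\mu$, giving the first estimate, and the analogous argument with $P^1,P^2$ replaced by $P^5,P^6$ (together with $P(\kappa)\subset P^5$) gives the second.

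The heart of the matter is to show that the (a~priori non-commutative) square
\[
\begin{tikzcd}
P^1 \ar[r,"F_\mu"] \ar[d,"i"'] & T(P^1) \ar[d,"\tilde{j}"] \\
P(\kappa) \ar[r,"F_\kappa"'] & T(P(\kappa))
\end{tikzcd}
\]
commutes in Alexander--Spanier cohomology, where $\tilde{j}$ denotes the inclusion of $T$-pairs induced by $P^1\subset P(\kappa)$. Writing $J$ for the closed interval between $\mu$ and $\kappa$, the first estimate above ensures that the restriction $G:=F|_{J\times P^1}$ takes values in $T(P(\kappa))$, hence is a well-defined usc multivalued map $G\colon J\times P^1\mto T(P(\kappa))$. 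Since $F$ is determined by a morphism on $\Lambda\times X$, so is $G$, and it therefore induces a homomorphism $G^*\colon H^*(T(P(\kappa)))\to H^*(J\times P^1)$. The endpoint embeddings $e_\mu,e_\kappa\colon P^1\to J\times P^1$, $e_\lambda(x):=(\lambda,x)$, are homotopic via the affine homotopy $(x,t)\mapsto((1-t)\mu+t\kappa,\,x)$, so $e_\mu^*=e_\kappa^*$ in Alexander--Spanier cohomology; consequently
\[
(\tilde{j}\circ F_\mu)^*\;=\;(G\circ e_\mu)^*\;=\;(G\circ e_\kappa)^*\;=\;(F_\kappa\circ i)^*.
\]

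Combining this identity with the trivial commutativity $i_{P(\kappa)}\circ i=\tilde{j}\circ i_{P^1}$ of the inclusion square of $P$- and $T$-pairs, and using the definition $I_P=H^*(F_P)\circ H^*(i_P)^{-1}$ of the index map, a short diagram chase yields $i^*\circ I_{P(\kappa)}=I_{P^1}\circ i^*$, which is precisely the morphism property for $i^*$. The analogous statement for $j^*$ is obtained by applying the same scheme to the restriction $G':=F|_{J\times P(\kappa)}$, whose target can be taken to be $T(P^7)$ thanks to the second estimate and the inclusion $P(\kappa)\subset P^5$; the resulting homotopy argument gives $j^*\circ I_{P^7}=I_{P(\kappa)}\circ j^*$, as required.

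The main obstacle is really the first paragraph: the seven-stage nesting is needed precisely because two independent layers of ``room'' must be available on each side---one layer so that Lemma~\ref{lem:wip_l} can sandwich $P(\kappa)$ between $P^3$ and $P^5$, and a further layer ($P^1\subset\Int P^2$ and $P^5\subset\Int P^6$) to absorb the perturbation of $F_\lambda$ as $\lambda$ varies in $J$. Once both multivalued homotopies are made to land in the correct $T$-pairs, the remainder is purely diagrammatic: the conceptual point is that although $F_\mu$ and $F_\kappa$ are genuinely distinct multivalued maps, the fact that they are restrictions of a single morphism on $\Lambda\times X$ forces them to induce equal homomorphisms in cohomology once a common, sufficiently enlarged codomain is in place.
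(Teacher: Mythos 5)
Your proposal is correct and follows the same overall strategy as the paper: sandwich a weak index pair $P(\kappa)$ for $F_\kappa$ between the given pairs via Lemma~\ref{lem:wip_l}, establish that $F_\lambda(P^1)$ lands in $T(P(\kappa))$ and $F_\lambda(P(\kappa))$ lands in $T(P^7)$ for all $\lambda$ near $\mu$, and then use the arc joining $\mu$ to $\kappa$ in $\Lambda_0$ as a multivalued homotopy to force commutativity of the relevant squares in Alexander--Spanier cohomology, after which the diagram chase with $I_P=H^*(F_P)\circ H^*(i_P)^{-1}$ is routine. The one genuine difference is how the two containments are obtained. The paper applies Lemma~\ref{lem:wip_l} to all three triples $(P^1,P^2,P^3)$, $(P^3,P^4,P^5)$, $(P^5,P^6,P^7)$, producing auxiliary weak index pairs $Q(\lambda)$ and $R(\lambda)$ \emph{for $F_\lambda$} on either side of $P(\lambda)$, and then reads off $F_\lambda(P^1)\subset T_N(Q(\lambda))\subset T_N(P(\lambda))$ and $F_\lambda(P(\lambda))\subset T_N(R(\lambda))\subset T_N(P^7)$ directly from Lemma~\ref{l1}(i); you instead apply Lemma~\ref{lem:wip_l} only to the middle triple and recover the containments from property (a) of $P^1$ and $P^5$ together with an upper-semicontinuity/tube-lemma argument using $P^2$ and $P^6$ as open cushions. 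Both mechanisms are valid (your estimate $F_\mu(P^1_k)\subset(X\setminus N)\cup\Int P^2_k$ does follow from condition (a) and $P^1\subset\Int P^2$, and it persists for nearby $\lambda$ by compactness of $P^1_k$); the paper's version buys uniformity over $\lambda$ purely from machinery already in place, while yours is slightly more self-contained at this step but spends an extra compactness argument. Either way the seven-fold nesting is consumed, just partitioned differently.
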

\begin{proof}
By Lemma \ref{lem:wip_l} we can find a neighborhood $\Lambda_0$ of $\mu$ in $\Lambda$ such that for each $\lambda\in\Lambda_0$ there exist weak index pairs $Q(\lambda)$, $P(\lambda)$ and $R(\lambda)$ with respect to $F_\lambda$ such that
$$
P^1\subset Q(\lambda)\subset P^3\subset P(\lambda)\subset P^5\subset R(\lambda)\subset P^7.
$$
Now, using the same arguments as in the proof of \cite[Lemma 7.5]{M90}, one can show that for arbitrarily fixed $\lambda\in \Lambda_0$, the weak index pair $P(\lambda)$ satisfies the assertion of the lemma. 

Indeed, fix $\kappa\in \Lambda_0$ and consider the following, generally noncommutative diagram
$$
\begin{tikzcd}
P^1\ar{d}\ar{r}{F_\mu} & T_N(P^1) \ar{d}\\[3ex]
P(\kappa)\ar{r}{F_\kappa}\ar{d} & T_{N}(P(\kappa))\ar{d}\\[3ex]
P^7\ar{r}{F_\mu} & T_N(P^7)
\end{tikzcd}
$$
in which horizontal arrows denote inclusions. By Lemma \ref{l1}, for any $\lambda\in \Lambda_0$, we have
$$
\begin{array}{l}
F_\lambda(P^1)\subset F_\lambda(Q(\lambda))\subset T_N(Q(\lambda))\subset T_N(P^3)\subset T_N(P(\lambda)),\\[1ex]
F_\lambda(P(\lambda))\subset F_\lambda(P^5)\subset F_\lambda (R(\lambda))\subset T_N(R(\lambda))\subset T_N(P^7)
\end{array}
$$
and $\mu$ and $\lambda$ are joined in $\Lambda _0$ by an arc. Therefore, we get the commutative diagram in the Alexander--Spanier cohomology
$$
\begin{tikzcd}
H^*(P^1) & [4em]\ar{l}[swap]{H^*(F_{\mu,P^1})} H^*(T_{N}(P^1)) \ar{r}{H^*(i_{P^1})}&[4em] H^*(P^1 ) \\[3ex]
H^*(P(\kappa)) \ar{u}{i^*}& \ar{l}[swap]{H^*(F_{\kappa,P(\kappa)})} H^*(T_{N}(P(\kappa)))\ar{u}{}\ar{r}{H^*(i_{P(\kappa)})}& H^*(P(\kappa))\ar{u}{i^*}  \\[3ex]
H^*(P^7) \ar{u}{j^*}&\ar{l}[swap]{H^*(F_{\mu,P^7})}  H^*(T_{N}(P^7))\ar{u}{}\ar{r}{H^*(i_{P^7})}& H^*(P^7 ) \ar{u}{j^*}
\end{tikzcd}
$$
which, according to the definition of an index map, completes the proof. 
\qed\end{proof}\ \ \

{\bf Proof of Theorem \ref{thm:homotopy}:} 
For the proof we use Lemma \ref{lem:nwips}, Lemma \ref{lem:wip_l} and Lemma \ref{lem:morph}, and argue similarly as in the proof of \cite[Theorem 2.11]{M90}. We present this reasoning here for the sake of completeness.

It suffices to show that for any given $\mu\in \Lambda$ there exists its neighbourhood $\Lambda_0$, such that for all $\nu\in\Lambda _0$
$$
C(\Inv (N,\mu))= C(\Inv (N,\nu)).
$$ 
Thus, fix $\mu\in\Lambda$. By Lemma \ref{lem:nwips} we can take weak index pairs
$P^1,\dots,P^{13}$ with respect to $F_\mu$ such that $P^i\subset\Int P^{i+1}$, $i=1,\dots,12$. Using Lemma \ref{lem:morph} for $P^1,\dots,P^7$ and again for $P^7,\dots, P^{13}$, we infer that there exists a neighborhood $\Lambda_0$ of $\mu$ such that for every $\lambda\in\Lambda_0$ there exist weak index pairs $P(\lambda)$, $Q(\lambda)$ with respect to $F_{\lambda}$, with $P^1\subset P(\lambda)\subset P^7\subset Q(\lambda)$ and such that we have the following commutative diagram of maps induced by inclusions
$$
\begin{tikzcd}
(H^*(P(\lambda)),I_{P(\lambda)})\ar{d}[swap]{j_0} &\ar{l}[swap]{j_1}(H^*(P^7),I_{P^7})\\[3ex]
(H^*(P^1),I_{P^1})& \ar{l}{j} (H^*(Q(\lambda)),I_{Q(\lambda)})\ar{u}[swap]{j_2}.\\
\end{tikzcd}
$$
Applying the Leray functor to the above diagram we get from \cite[Theorem 6.4]{BM2016} that $L(j_0)\circ L(j_1)=L(j_0\circ j_1)$ and $L(j_1)\circ L(j_2)=L(j_1\circ j_2)$ are isomorphisms; hence, so is $L(j)$. Therefore,
$$
C(\Inv (N,\mu))=L(H^*(P^1),I_{P^1})=L(H^*(Q(\lambda)),I_{Q(\lambda)})=C(\Inv (N,\lambda)),
$$  
which completes the proof.\qed

At the end of this section we revisit the leading example of \cite{EdJaMr15} and \cite{BM2016} in order to illustrate the homotopy property of the Conley index.

\begin{ex}\label{ex:hom1}
{\em
Take $S^1=\RR/\ZZ$. For simplicity we will identify a real number $x\in\RR$ with its equivalence class $[x]\in \RR/\ZZ$.
Consider the self-map
\begin{equation}
\label{eq:f}
   f: S^1\ni x\mapsto 2x\in S^1.
\end{equation}
Let $x_i:=\frac{i}{16}$. We take $A:=\setof{x_i\mid  i=0,1,2,\dots,15}\subset S^1$ as the finite sample.
Since $f(A)\subset A$, the restriction $g:=f_{|A}$ is an exact sample of $f$ on $A$.
Consider the grid $\cA$ on $S^1$ consisting of intervals $[x_i-\frac{1}{32},x_i+\frac{1}{32}]$.
The graph of the multivalued map $F$ obtained from the smallest combinatorial enclosure
$\cG$ of $g$ on $\cA$ is presented in Figure~\ref{fig:z2on16samples}.
Note that $F$ does not admit a continuous selector.

\begin{figure}[ht]
\begin{center}
\includegraphics[width=0.7\textwidth]{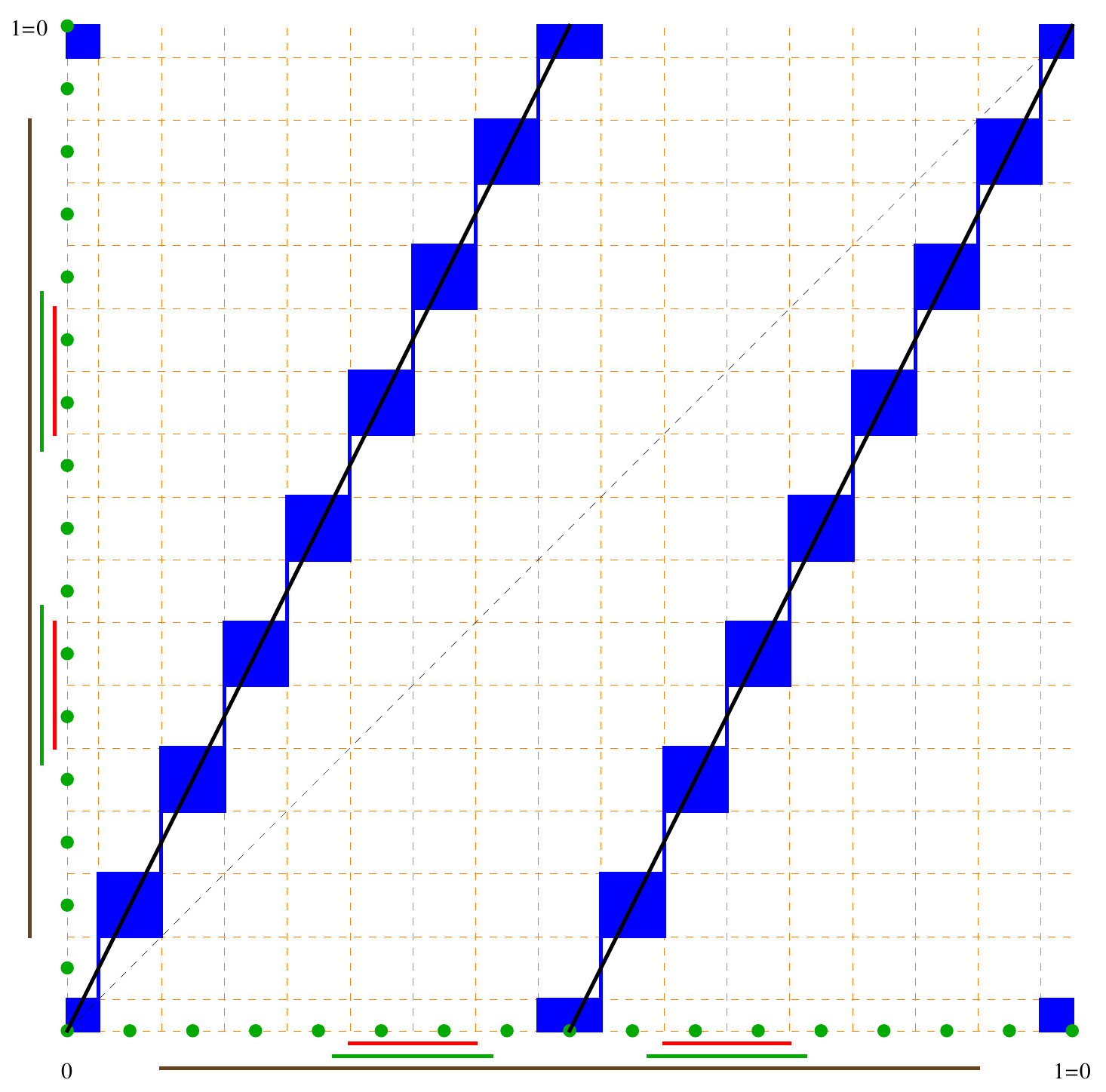}
\caption{The graph of the map $f$ given by \eqref{eq:f}, marked in black, and its sampling.
The $16$ sampled points are marked with green dots. The grid consisting of 16 intervals is marked in orange.
The graph of $F$ constructed from the sampling points is marked in blue.
A candidate for an isolated invariant set $S$ is marked in red. Its image $F(S)$, showing that $S$ is not a strongly
isolated invariant set, is marked in brown.
An isolating neighborhood $N$ for $S$ is marked in green.
}
\label{fig:z2on16samples}
\end{center}
\end{figure}

  Observe that $0$ is a hyperbolic fixed point of $f$. Thus, $\{0\}$ is an isolated invariant set of $f$. It belongs to $S:=[\frac{31}{32},\frac{1}{32}]\in\cA$. It is straightforward to observe that $S$ is an invariant set for $F$. 
  
 Consider 
\begin{equation}\label{eq:cont_ex}
F_\lambda(x):=\lambda F(x)+(1-\lambda)f(x)\for x\in S^1,\lambda\in [0,1]:=\Lambda
\end{equation}
and $N:=[\frac{15}{16},\frac{1}{16}]$
in order to see that $\{0\}=\Inv (N,F_0)$ and $S=\Inv (N, F_1)$ are related by continuation (cf. e.g. \cite{C78,RS88}). Therefore, by the homotopy property of the Conley index, it follows that the Conley index of $S$ for $F$ is the same as the Conley index of $\{0\}$ for $f$. 
  
Recall, that in \cite[Example 8.1]{BM2016} the same conclusion was obtained by the direct computation of the indices.
}
\end{ex}
\begin{ex}\label{ex:hom2}
{\em
Le us consider dynamical systems $f$, $F$ and $F_\lambda$ defined in the preceding example. Note that $\{\frac{1}{3},\frac{2}{3}\}$ is a hyperbolic periodic trajectory of $f$. In particular, it is
an isolated invariant set for $f$. Consider the cover of this set by elements of the grid $\cA$ and set
$S:=[\frac{9}{32},\frac{13}{32}]\cup [\frac{19}{32},\frac{23}{32}]$. It is easy to see that $S$ is an invariant set for $F$, each point of which belongs to a $2$-periodic trajectory of $F$ in $S$. 

Considering  the dmds $F_\lambda$ given by (\ref{eq:cont_ex}) and
$N:=[\frac{17}{64},\frac{27}{64}]\cup [\frac{37}{64},\frac{47}{64}]$ one can observe that 
$\{\frac{1}{3},\frac{2}{3}\}$ and $S$ are related by continuation. Thus, the homotopy property of the Conley index guaranties that the Conley index of $S$ for $F$ coincides with the Conley index
of $\{\frac{1}{3},\frac{2}{3}\}$ for $f$. 

For  the direct verification of the mentioned coincidence resulting with
$$
C_k(S,F)=C_k(\{\frac{1}{3},\frac{2}{3}\},f)=\left\{\begin{array}{rl}
(\Z ^2,\tau)&\mbox{for } k=1\\[1ex]
0&\mbox{otherwise,}
\end{array}
\right.
$$
where $\tau$ is a transposition $\tau:\Z ^2\ni(x,y)\mapsto (y,x)\in \Z^2,$
we refer to \cite[Example 8.2]{BM2016}).
}
\end{ex}
\section{Commutativity property of the Conley index}\label{sec:com_prop}
In this section we discuss another intrinsic property of the indices of Conley type, namely the commutativity property. It seems that this issue, in the context of discrete multivalued dynamical systems, appears here for the first time.

Our goal in this section is to prove a multivalued counterpart of \cite[Theorem 1.12]{M94}.    

Throughout this section $X$ and $Y$ are assumed to be locally compact metrizable spaces.
 
\begin{thm}{\em (}{\bf Commutativity property}{\em)}\label{thm:com1}
Let $F:X\mto X$ and $G:Y\mto Y$ be given discrete multivalued dynamical systems and let $\varphi:X\to Y$ and $\Psi:Y\mto X$ be partial maps such that $\varphi$ is continuous and injective, $\dom\varphi$ is compact, $\Psi$ is upper semicontinuous, $F=\Psi\varphi$ and $G=\varphi\Psi$. Assume that $S\subset X$ is an isolated invariant set with respect to $F$, and the domains of $\varphi$ and $\Psi$ contain neighborhoods of $S$ and $\varphi (S)$, respectively. Then $\varphi (S)$ is an isolated invariant set with respect to $G$ and $C(S,F)=C(\varphi (S),G)$. 
\end{thm}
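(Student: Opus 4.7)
The strategy is to exploit the intertwining relation $\varphi\circ F=\varphi\circ\Psi\circ\varphi=G\circ\varphi$, which holds wherever the compositions are defined. Since $\varphi|_{\dom\varphi}$ is a continuous injection on a compact Hausdorff set, it is a homeomorphism onto its image $\varphi(\dom\varphi)$. I would use $\varphi$ as a change of coordinates to transfer an isolating neighborhood $N$ of $S$ and a weak index pair $P$ in $N$ to corresponding objects $M$ and $Q:=\varphi(P)$ on the $Y$-side, showing that the resulting index maps are conjugate and hence have equal Leray reductions.

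First, using the assumption that $\dom\varphi$ and $\dom\Psi$ contain open neighborhoods of $S$ and $\varphi(S)$, I would choose an isolating neighborhood $N\subset\dom\varphi$ of $S$ with $\varphi(N)\subset\dom\Psi$. Since $\varphi$ is a homeomorphism from $\dom\varphi$ onto $\varphi(\dom\varphi)$, the set $\varphi(\Int N)$ is open in $\varphi(\dom\varphi)$, so there is an open $V\subset Y$ with $V\cap\varphi(\dom\varphi)=\varphi(\Int N)$. Then one picks a compact $M\subset V\cap\dom\Psi$ with $\varphi(S)\subset\Int_Y M$; by construction $\varphi^{-1}(M)\subset N$. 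To see that $M$ is an isolating neighborhood of $\varphi(S)$ with $\Inv(M,G)=\varphi(S)$, I would use a two-sided correspondence of solutions: any $\tau:\ZZ\to M$ satisfies $\tau(n+1)\in G(\tau(n))=\varphi(\Psi(\tau(n)))$ for all $n$, so $\tau(n)\in\varphi(\dom\varphi)$ for every $n\in\ZZ$; then $\sigma(n):=\varphi^{-1}(\tau(n))$ lies in $\varphi^{-1}(M)\subset N$ and, by injectivity together with $F=\Psi\varphi$, is a solution for $F$, hence $\sigma(0)\in\Inv N=S$ and $\tau(0)\in\varphi(S)$. The reverse inclusion $\varphi(S)\subset\Inv(M,G)$ comes from pushing an $F$-solution in $S$ forward through $\varphi$.

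Next, by Theorem \ref{thm:ex_wipBM}, pick a weak index pair $P=(P_1,P_2)$ for $F$ in $N$ with $P_1$ contained in a neighborhood of $S$ small enough that $\varphi(P_1)\subset\Int_Y M$, and set $Q:=(\varphi(P_1),\varphi(P_2))$. Using injectivity together with $\varphi F=G\varphi$, each of the four weak index pair conditions for $Q$ in $M$ reduces to the corresponding one for $P$ in $N$; the crucial identity is that any $y\in G(\varphi(P_i))\cap M$ is of the form $y=\varphi(z)$ with $z\in F(P_i)\cap\varphi^{-1}(M)\subset F(P_i)\cap N\subset P_i$, whence $y\in\varphi(P_i)$. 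The map $\varphi$ induces a homeomorphism of pairs $P\to Q$ and, via the excision-type isomorphism of Lemma \ref{l1}(ii) applied on both sides together with $\varphi^{-1}(M)\subset N$, also induces an isomorphism $H^*(T_M(Q))\to H^*(T_N(P))$. The resulting commutative square
\[
\begin{tikzcd}
P\ar{r}{F_P}& T_N(P)\\
Q\ar{u}{\varphi^{-1}}\ar{r}{G_Q}& T_M(Q)\ar[swap]{u}{\varphi^{-1}}
\end{tikzcd}
\]
expresses the conjugation $I_P=H^*(\varphi)\circ I_Q\circ H^*(\varphi)^{-1}$. Since the Leray functor preserves conjugation of endomorphisms, one gets $C(S,F)=L(H^*(P),I_P)=L(H^*(Q),I_Q)=C(\varphi(S),G)$.

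The main technical difficulty is the careful juggling of the ambient topologies of $X$ and $Y$: a priori $\varphi(\dom\varphi)$ need not be a $Y$-neighborhood of $\varphi(S)$, so verifying conditions (c) and (d) of the weak index pair property for $Q$ relies on the open set $V\subset Y$ constructed above (which plays the role of a local chart for $\varphi$ near $\varphi(S)$), combined with the observation that the $G$-orbits staying in $M$ are automatically trapped in $\varphi(\dom\varphi)$ because $G=\varphi\Psi$ there. Verification of the $F$-boundary condition $\bd_GQ_1\subset Q_2$ follows the same pattern, transferring $\bd_FP_1\subset P_2$ through the homeomorphism $\varphi|_{\dom\varphi}$ and using that the $\varphi$-preimage of any escape point of $G$ from $Q_1$ is an escape point of $F$ from $P_1$.
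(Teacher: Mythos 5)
Your proposal is not correct as written; it has two genuine gaps, and it misses the one idea that the paper's argument actually turns on.

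First, the direction in which you transfer the weak index pair is the wrong one. You push a weak index pair $P$ for $F$ forward to $Q:=\varphi(P)$ and claim it is a weak index pair in $M\subset Y$. But $Q_1\setminus Q_2=\varphi(P_1\setminus P_2)\subset\im\varphi$, and $\im\varphi$ need not be a neighborhood of $\varphi(S)$ in $Y$ (you flag this yourself, but the open set $V$ with $V\cap\varphi(\dom\varphi)=\varphi(\Int N)$ does not repair it). If $\im\varphi$ is nowhere dense in $Y$ — e.g.\ $X=\{0\}$, $F(0)=\{0\}$, $Y=[-1,1]$, $\Psi\equiv\{0\}$, $G\equiv\{0\}$ — then $\Int_Y(Q_1\setminus Q_2)=\emptyset$ while $\varphi(S)\neq\emptyset$, so condition (c) of Definition \ref{def:wip} fails for $Q$. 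The paper goes the other way: it first builds a compact $Y$-neighborhood $N$ of $\varphi(S)$ with $N\cap\im\varphi\subset N\cap\varphi(M')$, shows $N$ isolates $\varphi(S)$ for $G$, takes a weak index pair $Q$ in $N$ \emph{on the $Y$-side}, and pulls it back to $P:=\varphi^{-1}(Q)$; all four conditions then transfer cleanly because $\varphi^{-1}$ of a $Y$-interior is contained in an $X$-interior.

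Second, the conjugation claim is unjustified. The vertical arrows $\varphi^{-1}:T_M(Q)\to T_N(P)$ in your square are not defined, since $T_M(Q)\supset Y\setminus\Int M\not\subset\im\varphi$; and replacing them by composites of the excision isomorphisms of Lemma \ref{l1}(ii) leaves the commutativity of the square unproven. The obstruction is real: $F(P_1)$ may leave $\dom\varphi$ and $G(Q_1)$ may leave $\im\varphi$, so $\varphi$ only intertwines ($\varphi F=G\varphi$ where defined) and does not conjugate the two index maps. The paper's proof resolves this by using $\Psi$ itself at the cohomological level: since $F=\Psi\varphi$ and $G=\varphi\Psi$, the restriction $\Psi_{Q,T_M(P)}$ induces a map $I_{QP}$ with $I_P=\varphi^*\circ I_{QP}$ and $I_Q=I_{QP}\circ\varphi^*$, i.e.\ $(H^*(P),I_P)$ and $(H^*(Q),I_Q)$ are \emph{linked} rather than conjugate, and the Leray functor sends linked endomorphisms to isomorphic objects. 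That factorization through $\Psi$ is the heart of the commutativity argument and is absent from your proposal, which uses $\Psi$ only to derive the intertwining identity.
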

Before we proceed with the proof, let us remark that the assertion that $\varphi$ is injective, is essential. Namely, unlike in the single valued case, the image of an isolated invariant set $S$ under a noninjective $\varphi$ need not be an isolated invariant set, as presented in Example \ref{ex:com1}. Moreover, even if the image $\varphi (S)$ is an isolated invariant set, its Conley index may differ from the Conley index of $S$ (see Example \ref{ex:com2}). However, if the underlying isolated invariant set $S$ is  strongly isolated, then the commutativity property holds true for an arbitrary continuous $\varphi$, not necessarily injective (see Theorem \ref{thm:com2}).\\

{\bf Proof of Theorem \ref{thm:com1}:} First we observe that $\varphi (S)$ is invariant with respect to $G$. Indeed, consider $y\in \varphi (S)$ and take $x\in S$ such that $\varphi (x)=y$. Since $S$ is invariant with respect to $F$, there exists a solution $\sigma:\Z\to S$ with $\sigma (0)=x$. Define $\tau (k):=\varphi(\sigma (k))$ for $k\in\Z$. We have $\tau (0)=y$ and $\tau (k)=\varphi(\sigma (k))\in \varphi (F(\sigma (k-1))=\varphi (\Psi(\varphi (\sigma (k-1)))=\varphi (\Psi(\tau (k-1))=G(\tau (k-1))$, for an arbitrary $k\in\Z$. This means that $\tau$ is a solution with respect to $G$ through $y$ in $\varphi(S)$. We have proved that $\varphi(S)\subset \Inv _G(\varphi(S))$. The opposite inclusion is obvious.
  
Let $M'\subset\dom\varphi$ be an isolating neighborhood of $S$. 

Since $\varphi (S)$ and $\varphi (\dom {\varphi}\setminus\Int M')$ are closed and disjoint, 
we can take a compact neighborhood $N$ of $\varphi (S)$ such that 
\begin{equation}\label{eq:def_N}
N\cap\im\varphi\subset N\cap\varphi (M'). 
\end{equation}
Assume without loss of generality that $N\subset\dom\Psi$.

We shall prove that $N$ is an isolating neighborhood of $\varphi (S)$ with respect to $G$. Since $\varphi (S)$ is invariant with respect to $G$ and $ \varphi (S)\subset \Int N$, it suffices to verify that $\Inv_GN\subset\varphi (S)$. To this end consider $y\in \Inv _GN$ and $\tau:\Z\to N$, a solution with respect to $G$ through $y$, i.e. $\tau (0)=y$ and $\tau (k)\in G(\tau(k-1))$, for $k\in \Z$. Without loss of generality one can assume that $\tau (\Z)\subset N\cap\im\varphi$, as $G=\varphi\Psi$. Put $\sigma:=\varphi ^{-1}\tau$ and observe that $\sigma (k)=\varphi ^{-1}(\tau (k))\in \varphi ^{-1}(G(\tau (k-1)))=F(\sigma(k-1))$, for $k\in\Z$. Moreover, by (\ref{eq:def_N}), $\sigma(\Z)\subset M'$, which means that $\sigma$ is a solution with respect to $F$ in $M'$. But $M'$ is an isolating neighborhood of $S$ with respect to $F$, hence $\sigma(\Z)\subset S$ and, as a consequence, $\tau (\Z)\subset \varphi (S)$. In particular $y\in \varphi (S)$. 
 
It is straightforward to observe that $M:=\varphi ^{-1}(N)\subset M'$ is an isolating neighborhood of $S$ with respect to $F$. Take a weak index pair $Q=(Q_1,Q_2)$ in $N$ and define $P_i:=\varphi ^{-1}(Q_i)$, $i=1,2$. We will prove that $P:=(P_1,P_2)$ is a weak index pair in $M$. Compactness of $P_1$ and $P_2$ as well as inclusions $P_2\subset P_1\subset M$ are obvious. For the proof of property (a) take $x\in F(P_i)\cap M$. Then $\varphi(x)\in\varphi(F(P_i)\cap M)=G(Q_i)\cap N$. By the positive invariance of $Q_i$ in $N$ with respect to $G$ we have $\varphi (x)\in Q_i$, hence $x\in P_i$. For the proof of (b) take $x\in\bd _F(P_1)$. Then $\varphi(x)\in Q_1\cap\cl (\varphi (F(P_1))\setminus Q_1)$, as $\varphi$ is injective. Consequently, $\varphi(x)\in Q_1\cap\cl (G(Q_1)\setminus Q_1)=\bd _G (Q_1)$. By property (b) of $Q$ we have $\varphi (x)\in Q_2$. We infer that $x\in P_2$, which means that $P$ satisfies (b). By property (c) of $Q$ we have $\varphi (S)\subset\Int(Q_1\setminus Q_2)$. Thus, $S\subset\varphi ^{-1}(\Int(Q_1\setminus Q_2))\subset \Int(\varphi ^{-1}(Q_1)\setminus\varphi ^{-1}(Q_2))$, which proves property (c) of $P$. It remains to verify  (d). Let $x\in P_1\setminus P_2$. Then $\varphi(x)\in Q_1\setminus Q_2$ and property (d) of $Q$ yields $\varphi (x)\in \Int N$. Consequently, $x\in \varphi ^{-1}(\Int N)\subset \Int M$, and we have property (d) for $P$.  
 
By Lemma \ref{l1}(i) restrictions $F_{P,T_{M}(P)}(x):=F(x)$ and $G_{Q,T_N(Q)}(x):=G(x)$ are maps of pairs $F_{P,T_{M}(P)}:P\mto T_{M}(P)$ and $G_{Q,T_{N}(Q)}:Q\mto T_{N}(Q)$, respectively. Furthermore, we can treat the restriction of $\Psi$ as a map of pairs $\Psi_{Q,T_{M}(P)}:Q\mto T_{M}(P)$, because $\Psi (Q)=\Psi(\varphi (P))=F(P)\subset T_{M}(P)$. We have the following commutative diagram
\begin{equation}\label{d:d1}
\begin{tikzcd}
(P _1,P _2)\ar{d}[swap]{\varphi}\ar{r}{F} & (T_{M,1}(P),T_{M,2}(P)) \ar{d}{\varphi} & (P _1,P _2) \ar{l}[swap]{j_1}\ar{d}{\varphi} \\[3ex]
(Q_1,Q_2) \ar{r}[swap]{G}\ar{ur}{\Psi} & (T_{N,1}(Q),T_{N,2}(Q))& (Q_1,Q_2) \ar{l}{j_2}
\end{tikzcd}
\end{equation}
in which $j_1, j_2$ are inclusions. According to Lemma \ref{l1}(ii) inclusions $j_1,j_2$ induce isomorphisms in cohomology and we have well defined index maps $I_P:=H^*(F_{P,T_{M}(P)})\circ H^*(j_1)^{-1}$, $I_Q:=H^*(G_{Q,T_N(Q)})\circ H^*(j_2)^{-1}$ and $I_{QP}:=H^*(\Psi_{Q,T_{M}(P)})\circ H^*(j_1)^{-1}$. Consequently, diagram (\ref{d:d1}) results in the following commutative diagram in cohomology
$$
\begin{tikzcd}
H^*(P)& \ar{dl}{I_{QP}}\ar{l}[swap]{I_P}H^*(P)\\[3ex]
H^*(Q)\ar{u}{\varphi ^*}& \ar{l}{I_{Q}}H^*(Q)\ar{u}[swap]{\varphi ^*} 
\end{tikzcd}
$$
which means that $(H^*(P),I_P)$ and $(H^*(Q),I_Q)$ are linked in the sense of \cite{M90}; hence $L(H^*(P),I_P)$ and $L(H^*(Q),I_Q)$ are isomorphic.  
\qed

Now we provide examples showing that the conclusion of Theorem \ref{thm:com1} does not hold for an arbitrary continuous map $\varphi$.
\begin{ex}\label{ex:com1}
{\em Take $X:=[-5,5]$ and define $F:X\mto X$ by
\begin{equation}\label{eq:com1_F}
F(x):=\left\{
\begin{array}{rl}
\{-5\}&\for x\in [-5,-3)\\
{[-5,3]}&\for x=-3\\
{[-3,3]}&\for x\in (-3,-2]\\
{[2,3]}&\for x\in (-2,-1)\\
{[2,5]}&\for x=-1\\
\{5\}&\for x\in (-1,1)\\
{[2,5]}&\for x=1\\
{[2,3]}&\for x\in (1,2)\\
{[-3,3]}&\for x\in [2,3)\\
{[-5,3]}&\for x=3\\
\{-5\}&\for x\in (3,5].
\end{array}
\right.
\end{equation}
Consider $Y:=[0,5]$ and let $G:Y\mto Y$ be given by
\begin{equation}\label{eq:com1_G}
G(y):=|F(y)|\for y\in Y.
\end{equation}
The graphs of $F$ and $G$ are presented in Figure \ref{fig:com1} (A) and (B), respectively. 
\begin{figure}[h]
\begin{minipage}[t]{0.47\linewidth}
\centering
\includegraphics[width=\textwidth]{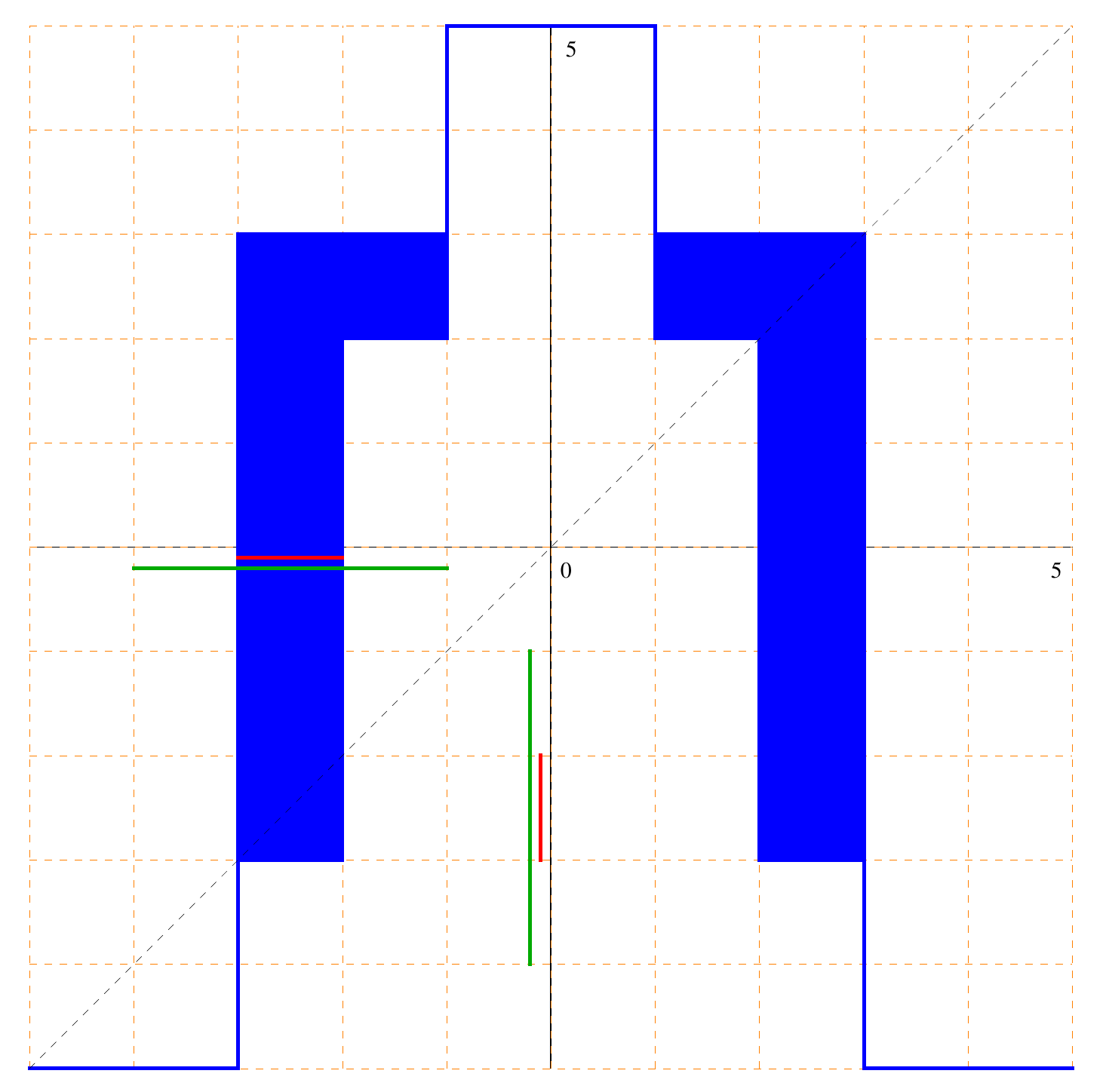}
\subcaption{The graph of $F$ given by (\ref{eq:com1_F}) is marked in blue. The isolated invariant set $S$ and its isolating neighborhood $M$ are indicated by line segments in red and green, respectively.}
\end{minipage}
\hspace{5mm}\begin{minipage}[t]{0.47\linewidth}
\centering
\includegraphics[width=\textwidth]{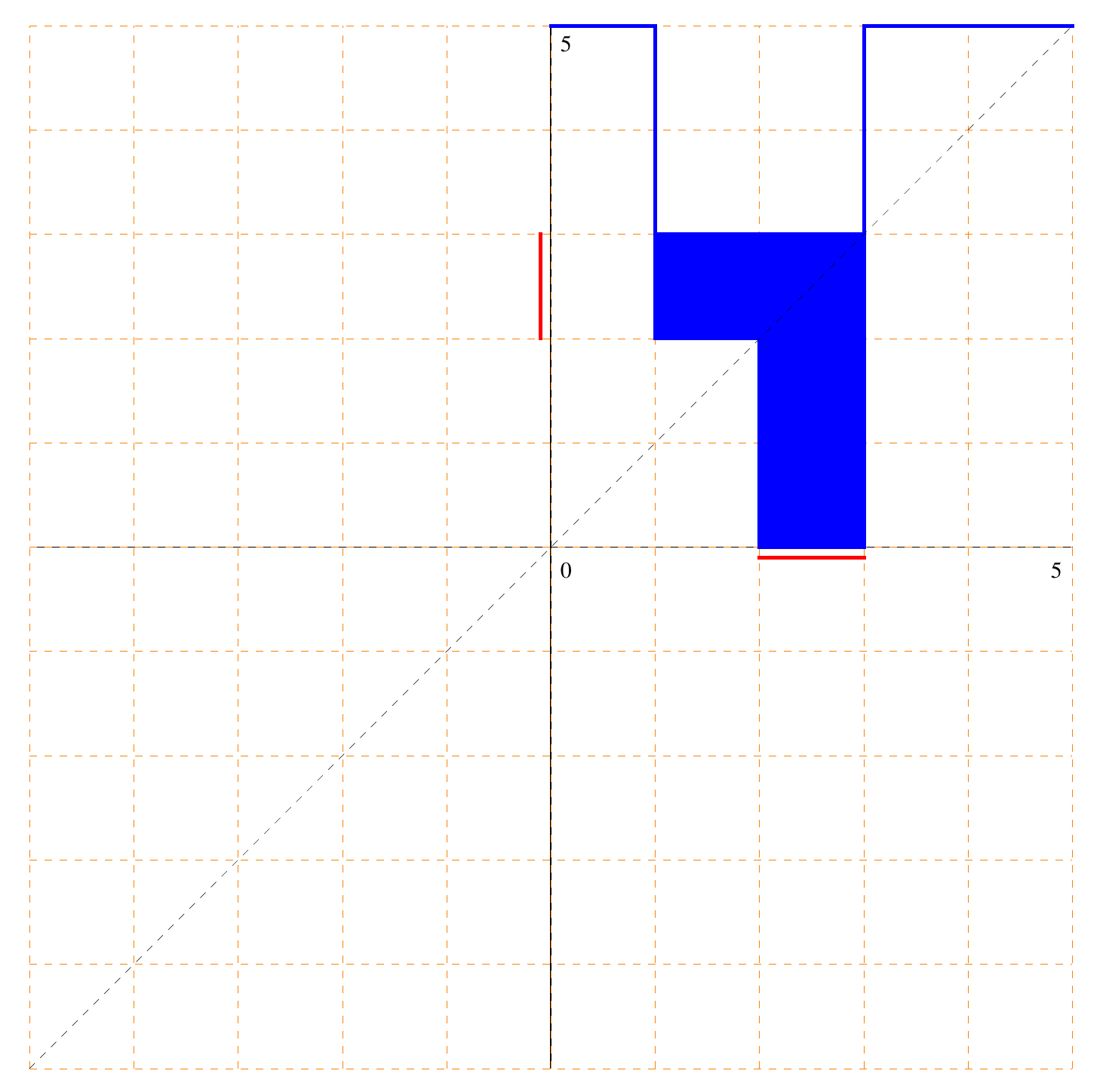}
\subcaption{The graph of $G$ given by (\ref{eq:com1_G}) is marked in blue. The image $\varphi(S)$ is indicated by a red line segment.}
\end{minipage}
\caption{Discrete multivalued dynamical systems $F$ and $G$, given by (\ref{eq:com1_F}) and (\ref{eq:com1_G}), respectively}\label{fig:com1}
\end{figure}
Take $\varphi :X\ni x\mapsto |x|\in Y$ and $\Psi:=F|_Y$, and observe that $\Psi\varphi=F$ and $\varphi\Psi=G$. Moreover, $S:=[-3,-2]$ is an isolated invariant set with respect to $F$, isolated, for instance, by $M:=[-4,-1]$, whereas $\varphi (S)=[2,3]$ is not an isolated invariant set with respect to $G$, as the invariant part  of any its compact neighborhood is significantly larger that $\varphi (S)$.   
}
\end{ex}
\begin{ex}\label{ex:com2}
{\em We slightly modify the preceding example. Let $X:=[-5,5]$ and define $F:X\mto X$ by
\begin{equation}\label{eq:com_F}
F(x):=\left\{
\begin{array}{rl}
\{-5\}&\for x\in [-5,-3)\\
{[-5,3]}&\for x=-3\\
{[-3,3]}&\for x\in (-3,-2)\\
{[3,5]}&\for x=-2\\
\{5\}&\for x\in (-2,2)\\
{[-3,5]}&\for x=2\\
{[-3,3]}&\for x\in (2,3)\\
{[-5,3]}&\for x=3\\
\{-5\}&\for x\in (3,5]
\end{array}
\right.
\end{equation}
Again we consider $Y:=[0,5]$ and $G:Y\mto Y$  given by (\ref{eq:com1_G}).
The graphs of $F$ and $G$ are presented in Figure \ref{fig:com} (A) and (B), respectively. As before, $\varphi :X\ni x\mapsto |x|\in Y$ and $\Psi:=F|_Y$ satisfy $\Psi\varphi=F$ and $\varphi\Psi=G$. 

Consider $S:=[-3,-2]$, an isolated invariant set with respect to $F$, isolated by $M:=[-4,-1]$, and observe that the pair $P$, with $P_1:=M$ and $P_2:=\{-4\}\cup\{-1\}$ is a weak index pair with respect to $F$ in $M$. Then 
$$
H^k(P)=\left\{\begin{array}{rl}
\Z&\mbox{ for }k=1\\
0&\mbox{ for }k\neq 1
\end{array}
\right. 
$$
and the index map is the identity; hence
$$
C_k(S,F)=\left\{\begin{array}{rl}
(\Z,\id) &\mbox{ for }k=1\\
0&\mbox{ for }k\neq 1.
\end{array}
\right. 
$$
\begin{figure}[h]
\begin{minipage}[b]{0.47\linewidth}
\centering
\includegraphics[width=\textwidth]{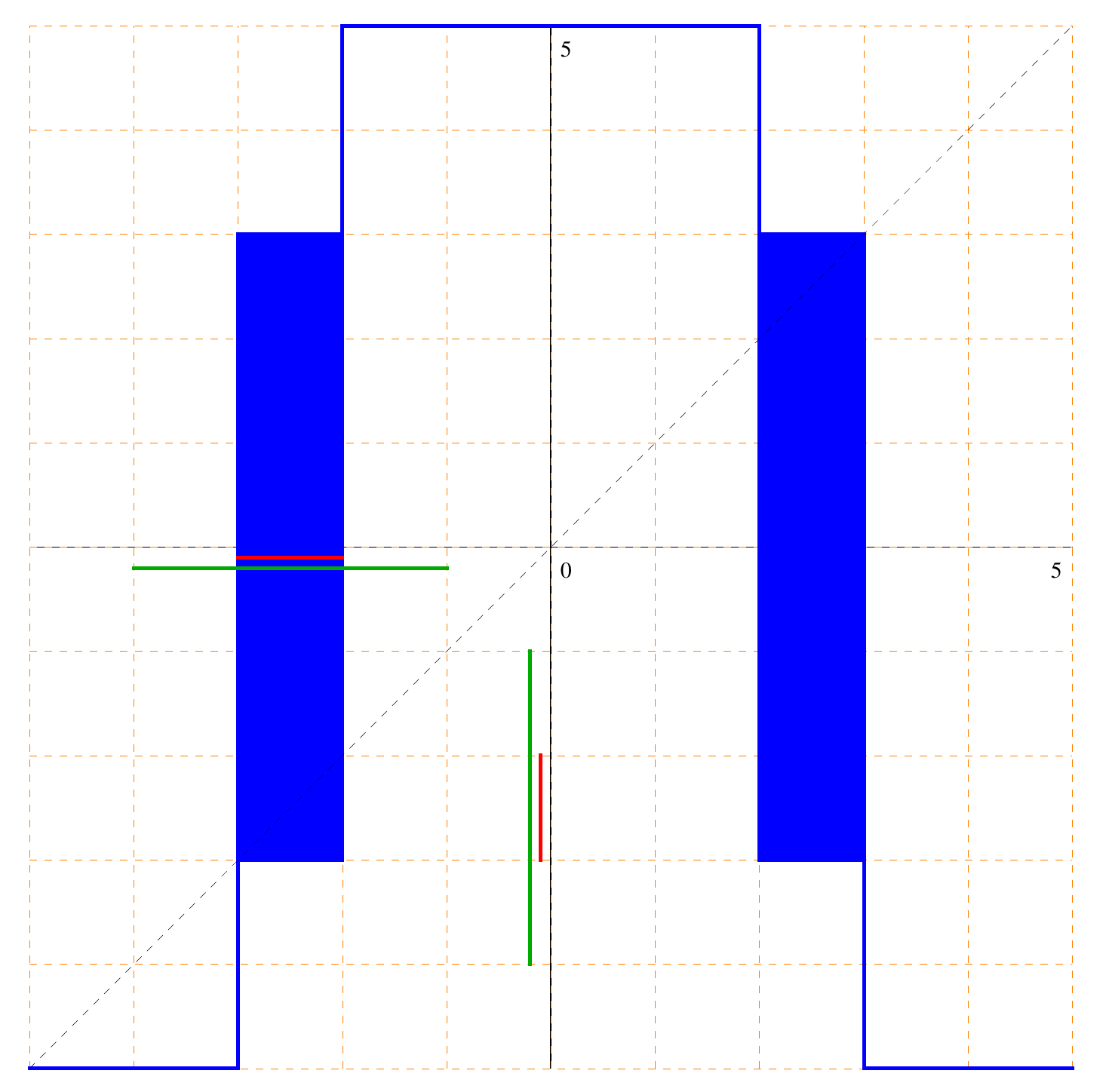}
\subcaption{The graph of $F$ given by (\ref{eq:com_F}) is marked in blue. The red and green line segments indicate the isolated invariant set $S$ and its isolating neighborhood $M$, respectively.}
\end{minipage}
\hspace{5mm}\begin{minipage}[b]{0.47\linewidth}
\centering
\includegraphics[width=\textwidth]{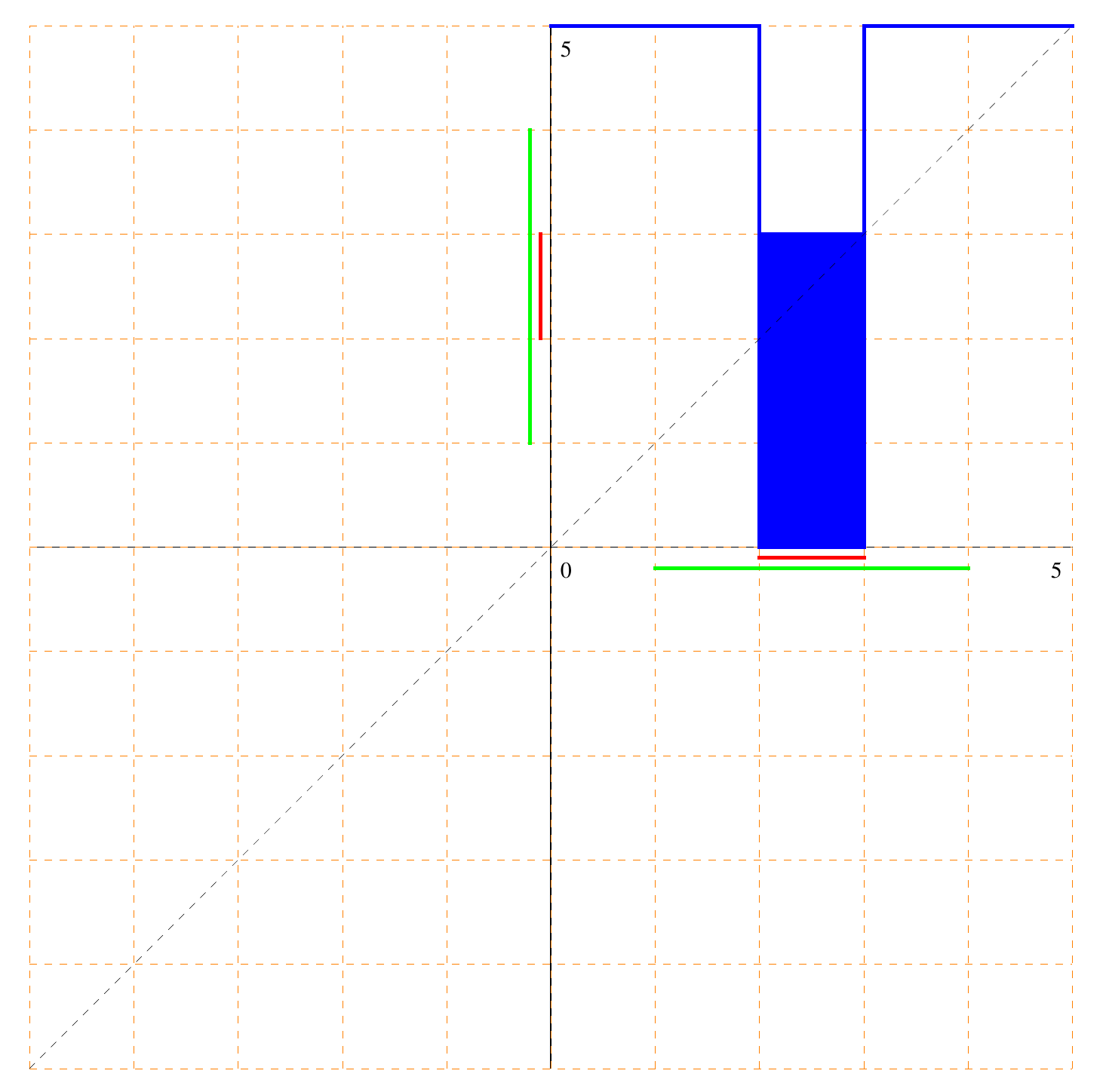}
\subcaption{The graph of $G$ given by (\ref{eq:com1_G}) is marked in blue. The red and green line segments indicate the the isolated invariant set $\varphi(S)$ and its isolating neighborhood $N$, respectively.}
\end{minipage}
\caption{Isolated invariant sets with respect to discrete multivalued dynamical systems $F$ and $G$}\label{fig:com}
\end{figure}

Now we take under consideration $\varphi (S)=[2,3]$, which is an isolated invariant set with respect to $G$. One easily verifies that $N:=[1,4]$ isolates $\varphi(S)$ and $Q=(Q_1,Q_2)$, where $Q_1:=N$, $Q_2:=\{1\}\cup\{4\}$, is a weak index pair in $N$. We have 
$$
H^k(Q)=\left\{\begin{array}{rl}
\Z &\mbox{ for }k=1\\
0&\mbox{ for }k\neq 1
\end{array}
\right. 
$$
and $I_Q=0$. Thus, $C(\varphi(S),G)$ is trivial, showing that
$C(S,F)\neq C(\varphi(S),G)$.
}
\end{ex}
Similarly as in the single valued case, we are able to apply Theorem \ref{thm:com1} for an inclusion in order to treat the restriction of a given discrete multivalued dynamical system to an invariant subspace.
\begin{thm}
Let $A\subset X$ be a locally compact subset of $X$ such that $F(X)\subset A$. If $S$ is an isolated invariant set with respect to $F$ then $S$ is an isolated invariant set with respect to $F_{|A}$ and $C(S,F)=C(S,F_{|A})$.
\end{thm}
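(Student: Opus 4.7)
The plan is to apply the commutativity Theorem~\ref{thm:com1} with the inclusion $A\hookrightarrow X$ in the role of $\varphi$ and $F$ itself in the role of $\Psi$, after an elementary preliminary verification that $S$ is also isolated for $F|_A$. The starting observation is that any two-sided solution $\sigma\colon\Z\to X$ of $F$ takes values in $A$, since $\sigma(n)\in F(\sigma(n-1))\subset F(X)\subset A$ for every $n\in\Z$; in particular $S\subset A$, and solutions of $F$ lying in a subset $B\subset A$ coincide with solutions of $F|_A$ in $B$, because $F|_A(a)=F(a)$ for $a\in A$.

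Next, I would pick an isolating neighborhood $N\subset X$ of $S$ for $F$ and, using local compactness of $A$ together with compactness of $S$, choose a compact neighborhood $N'$ of $S$ in $A$ with $N'\subset N$. The preceding observation gives $\Inv_{F|_A}N'=\Inv_F N'\subset\Inv_F N=S\subset\Int_A N'$, so $N'$ isolates $S$ with respect to $F|_A$, and $S$ is an isolated invariant set for $F|_A$.

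Finally, I would invoke Theorem~\ref{thm:com1} with theorem's $X=A$, $Y=X$, theorem's $F=F|_A$, theorem's $G=F$, taking $\varphi$ to be the inclusion restricted to a compact neighborhood $K$ of the compact set $S\cup F(S)$ in $A$, chosen so that $S\cup F(S)\subset\Int_A K$, and $\Psi:=F\colon X\mto A$ (well-defined since $F(X)\subset A$). Then $\varphi$ is continuous and injective with compact domain, $\Psi$ is usc, and the identities $F|_A=\Psi\varphi$ on $K$ and $F=\varphi\Psi$ on a neighborhood $L$ of $S$ in $X$ hold; to produce $L$ with $F(L)\subset K=\dom\varphi$, write $\Int_A K=A\cap V$ for some open $V\subset X$ and use upper semicontinuity of $F$ to cover $S$ by finitely many open neighborhoods $W_i\subset X$ with $F(W_i)\subset V\cap A\subset K$. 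Theorem~\ref{thm:com1} then yields $C(S,F|_A)=C(\varphi(S),F)=C(S,F)$. The main obstacle I expect is precisely this local verification of $F=\varphi\Psi$: since $\dom\varphi$ must be compact one cannot simply take $\dom\varphi=A$, and the multivalued $F(x)$ may leave $\dom\varphi$ for $x$ only slightly away from $S$, which forces $\dom\varphi$ to be a compact neighborhood of $F(S)$ rather than merely of $S$.
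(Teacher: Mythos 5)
Your proposal is correct and follows exactly the paper's route: the paper's entire proof is the one-line instruction to apply Theorem~\ref{thm:com1} with the inclusion $i_A\colon A\to X$ as $\varphi$ and $F$ as $\Psi$. You merely supply the details the paper leaves implicit (the preliminary check that $S$ is isolated for $F|_A$, needed to meet the hypothesis of Theorem~\ref{thm:com1} in this orientation, and the restriction of $\varphi$ to a compact domain), and these are handled correctly.
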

\begin{proof}
For the proof we apply Theorem \ref{thm:com1} with the inclusion $i_A:A\to X$ and $F$ in the role of $\varphi$ and $\Psi$, respectively.
\qed\end{proof}
If an isolated invariant set $S$ is actually strongly isolated, then the conclusion of Theorem \ref{thm:com1} holds for any continuous $\varphi$, not necessarily injective.
\begin{thm}\label{thm:com2}
Let $F:X\mto X$ and $G:Y\mto Y$ be given discrete multivalued dynamical systems and let 
$\varphi:X\to Y$ and $\Psi:Y\mto X$ be partial maps such that $\varphi$ is continuous, $\dom\varphi$ is compact, $\Psi$ is upper semicontinuous, $F=\Psi\varphi$ and $G=\varphi\Psi$. Assume that $S\subset X$ is a strongly isolated invariant set with respect to $F$, with some its strongly isolating neighborhood $M$ contained in the domain of $\varphi$, and the domain of $\Psi$ contains a neighborhood of $\varphi (S)$. Then $\varphi (S)$ is an isolated invariant set with respect to $G$, and $C(S,F)=C(\varphi (S),G)$. 
\end{thm}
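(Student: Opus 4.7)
The plan is to mimic the proof of Theorem \ref{thm:com1}, replacing its pull-back construction of a weak index pair (which required injectivity of $\varphi$) by a direct construction of compatible weak index pairs on both sides, controlled by the strong isolation of $S$. Invariance $\varphi(S)\subset\Inv_G\varphi(S)$ is obtained exactly as in Theorem \ref{thm:com1}: transport an $F$-solution $\sigma\colon\ZZ\to S$ through $x$ to the $G$-solution $\tau:=\varphi\circ\sigma$ through $\varphi(x)$ in $\varphi(S)$; the reverse inclusion is automatic.

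To construct an isolating neighborhood of $\varphi(S)$, observe that strong isolation forces $\Psi(\varphi(S))=F(S)\subset\Int M$. Upper semicontinuity of $\Psi$ makes the small counterimage $\Psi_{-1}(\Int M)$ open, so intersecting it with the hypothesized neighborhood of $\varphi(S)$ inside $\dom\Psi$ I pick a compact neighborhood $N$ of $\varphi(S)$ with $\Psi(N)\subset\Int M$. To verify $\Inv_G N\subset\varphi(S)$, take a $G$-solution $\tau\colon\ZZ\to N$ and use $G=\varphi\Psi$ to pick $x_k\in\Psi(\tau(k))\subset\Int M$ with $\varphi(x_k)=\tau(k+1)$; since $\Psi\varphi=F$, the sequence $(x_k)$ is an $F$-solution inside the isolating neighborhood $M$, and isolation of $S$ forces $x_k\in S$, whence $\tau(k+1)=\varphi(x_k)\in\varphi(S)$.

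The main obstacle is the index equality $C(S,F)=C(\varphi(S),G)$. Unlike in Theorem \ref{thm:com1}, I cannot simply pull back a weak index pair from $N$ to $M$; in fact even the naive push-forward $(\varphi(P_1),\varphi(P_2))$ of a weak index pair $P$ for $F$ can fail axiom (c), since non-injectivity may identify a point of $P_1\setminus P_2$ with a point of $P_2$. The plan is therefore to replace $M$ by $M\cap\varphi^{-1}(N)$ (which remains strongly isolating for $S$ and, by continuity of $\varphi$, satisfies $\varphi(X\setminus\Int M)\subset Y\setminus\Int N$) and then to apply Theorem \ref{thm:ex_wipBM} twice: once to produce a weak index pair $P$ for $F$ in $M$ with $P_1$ arbitrarily close to $S$, and once to produce a weak index pair $Q$ for $G$ in $N$ with $Q_1$ arbitrarily close to $\varphi(S)$ and containing $\varphi(P_1)$. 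The trapping $\Psi(N)\subset\Int M$ together with property (a) of $P$ and the freedom to shrink $P_1$ close to $S$ (hence $\varphi(P_1)$ close to $\varphi(S)$) will let me arrange $\varphi(P_i)\subset Q_i$ and $\Psi(Q_i)\subset P_i\subset T_{M,i}(P)$, so that $\varphi$ is a map of pairs $P\to Q$ and $T_M(P)\to T_N(Q)$, while $\Psi$ is a diagonal map of pairs $Q\to T_M(P)$. The resulting diagram, analogous to (\ref{d:d1}), combined with Lemma \ref{l1}(ii), shows that $(H^*(P),I_P)$ and $(H^*(Q),I_Q)$ are linked via $\varphi^*$ and $\Psi^*$ in the sense of \cite{M90}; applying the Leray functor yields $C(S,F)\cong C(\varphi(S),G)$. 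The decisive difficulty is choosing $P$ and $Q$ simultaneously so that all four compatibility inclusions hold; strong isolation plays the role that injectivity did in Theorem \ref{thm:com1} by keeping $\Psi(N)$ trapped strictly inside $\Int M$ and letting me take $P_1$ as tight as needed around $S$.
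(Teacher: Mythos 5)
Your first two steps (invariance of $\varphi(S)$, and the choice of a compact neighborhood $N$ of $\varphi(S)$ with $\Psi(N)\subset\Int M$ via upper semicontinuity, followed by lifting $G$-solutions in $N$ to $F$-solutions in $M$) coincide with the paper's argument. The divergence, and the gap, is in how you produce the index pairs. Your premise that one ``cannot simply pull back a weak index pair from $N$'' is mistaken: the paper takes a weak index pair $Q$ for $G$ in $N$ and sets $P_i:=\varphi^{-1}(Q_i)$, a weak index pair for $F$ in the isolating neighborhood $\varphi^{-1}(N)$ of $S$. The only place injectivity entered the verification in Theorem \ref{thm:com1} was condition (b), and there it is dispensable: if $x_n\in F(P_1)\setminus P_1$ converges to $x\in\bd_F(P_1)$, then $\varphi(x_n)\in\varphi(F(P_1))=G(\varphi(P_1))\subset G(Q_1)$ while $x_n\notin\varphi^{-1}(Q_1)$ forces $\varphi(x_n)\notin Q_1$, so $\varphi(x)\in\bd_G(Q_1)\subset Q_2$ and $x\in P_2$ --- no injectivity needed. (Your objection about the push-forward $(\varphi(P_1),\varphi(P_2))$ failing axiom (c) is correct but irrelevant, since nobody needs the push-forward.) Injectivity in Theorem \ref{thm:com1} is really needed for the isolation of $\varphi(S)$, and that is exactly what strong isolation replaces here.

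Having rejected the pull-back, you propose to build $P$ (for $F$ in $M$) and $Q$ (for $G$ in $N$) independently via Theorem \ref{thm:ex_wipBM} and to ``arrange'' $\varphi(P_i)\subset Q_i$ and $\Psi(Q_i)\subset T_{M,i}(P)$. This is the entire content of the theorem and you do not carry it out. Two of the four inclusions are genuinely problematic. First, $\varphi(P_2)\subset Q_2$ asks that $\varphi$ map the exit set of $P$ into the exit set of $Q$; with $P_2=F^+_M(P_1\setminus U)$ and $Q_2=G^+_N(Q_1\setminus U')$ this needs $\varphi(P_1\setminus U)\cap U'=\emptyset$, i.e.\ a compatibility between the neighborhoods $U$ of $\Inv^+M$ and $U'$ of $\Inv^+ N$ that ``taking $P_1$ tight around $S$'' does not provide ($P_1$ controls the unstable side, not the exit set). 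Second, $\Psi(Q_i)\subset T_{M,i}(P)$ together with $\Psi(Q_i)\subset\Int M$ forces $\Psi(Q_i)\subset P_i$; but $Q_i$ will in general contain points outside $\im\varphi$, on which $\Psi$ is unrelated to $F(P_i)$, and your construction gives no control over $\Psi$ there. The pull-back $P=\varphi^{-1}(Q)$ sidesteps both issues at once, which is why the paper uses it. As written, your proof establishes the isolation statements but not the index equality.
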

\begin{proof}
Since $M$ is a strongly isolating neighborhood of $S$, we have $F(S)=\Psi(\varphi (S))\subset\Int M$. By the upper semicontinuity of $\Psi$, there exists an open neighborhood $V$ of $\varphi(S)$ such that $\Psi(V)\subset \Int M$. Thus, we can take $N$, a compact neighborhood of $\varphi (S)$ such that 
\begin{equation}\label{eq:com2e}
\Psi (N)\subset \Int M.
\end{equation}
Without loss of generality we can assume that $N\subset\dom\Psi$. 

We will show that $N$ is an isolating neighborhood of $\varphi (S)$. Take $y\in \Inv _GN$ and consider a solution $\tau$ with respect to $G$ in $N$ through $y$, i.e. $\tau:\Z\to N$ with $\tau (0)=y$ and $\tau (k)\in G(\tau(k-1))$ for $k\in \Z$. We have $\tau (k)\in\varphi\Psi(\tau (k-1))\cap N$, hence we can take $x\in\Psi(\tau (k-1))$ such that $\tau (k)=\varphi (x)$ and define $\sigma$ by setting $\sigma (k):=x$, for any $k\in\Z$. Then we have $\sigma (k)\in\Psi (\tau(k-1))=\Psi (\varphi (\sigma (k-1)))=F(\sigma (k-1))$. This along with (\ref{eq:com2e}) means that we have defined a solution with respect to $F$ in $M$. Thus $\sigma (\Z)\subset S$, as $M$ is an isolating neighborhood for $S$. Consequently, $\tau (\Z)\subset \varphi (\sigma (\Z))\subset \varphi (S)$. In particular, $y\in\varphi (S)$, which shows that $\Inv _GN\subset\varphi (S)$. For the proof of the opposite inclusion consider $y\in \varphi (S)$ and $x\in S$ with $y=\varphi (x)$. Let $\sigma $ be a solution with respect to $F$ through $x$ in $S$, i.e. $\sigma:\Z\to S$, $\sigma (0)=x$ and $\sigma (k)\in F(\sigma (k-1))$ for $k\in\Z$. Define $\tau :=\varphi \circ \sigma$. One can verify that $\tau$ is a solution with respect to $G$ in $\varphi (S)$ through $y$, which yields $\varphi(S)\subset \Inv _GN$.

Now we observe that $\varphi ^{-1}(N)$ is an isolating neighborhood of $S$ with respect to $F$. We have $\Inv _F(S)=S$ and $S\subset \Int \varphi ^{-1}(N)$, therefore, it suffices to show that $\Inv _F\varphi ^{-1}(N)\subset S$. Take $x\in \Inv _F\varphi ^{-1}(N)$ and consider a solution $\sigma $ with respect to $F$ in $\varphi ^{-1}(N)$ through $x$. 
By (\ref{eq:com2e}), for any $k\in\Z$, we have $\sigma(k)\in F(\sigma (k-1))=\Psi(\varphi (\sigma(k-1))\subset M$. Thus, $\sigma$ is a solution with respect to $F$ in $M$. But $M$ is an isolating neighborhood of $S$, hence $\sigma (\Z)\subset S$ follows. In particular $x\in S$.

Let $Q=(Q_1,Q_2)$ be a weak index pair in $N$. Define $P_i:=\varphi ^{-1}(Q_i)$, $i=1,2$. We show that $P:=(P_1,P_2)$ is a weak index pair in $\varphi ^{-1}(N)$. Clearly $P_1$ and $P_2$ are compact, and $P_2\subset P_1\subset\varphi ^{-1}(N)$. Using the same reasoning as in the proof of Theorem \ref{thm:com1} one can show that $P$ satisfies conditions (a), (c) and (d). It remains to verify (b). To this end consider $x\in\bd _F (P_1)$. Since $x\in\cl(F(P_1)\setminus P_1)$, we can take a sequence $\{x_n\}\subset F(P_1)\setminus P_1$ convergent to $x$. Clearly $\varphi (x_n)$ converges to $\varphi (x)$. Moreover, for any $n$ we have $\varphi (x_n)\in \varphi (F(P_1))=G(Q_1)$ and $\varphi (x_n)\notin Q_1$. Thus, we have $\varphi (x_n)\in G(Q_1)\setminus Q_1$ and, consequently, $\varphi (x)\in \cl (G(Q_1)\setminus Q_1)$. We also have $\varphi (x)\in Q_1$, as $x\in\bd_F(P_1)\subset P_1$. This means that $\varphi (x)\in \bd_G(Q_1)$ which, according to property (b) of $Q$, implies $\varphi (x)\in Q_2$ and $x\in P_2$.

The remaining part of the proof runs along the lines of an appropriate part of the proof of Theorem \ref{thm:com1}. 
\qed\end{proof}
\section*{Acknowledgements}
I would like to express my sincere gratitude to Professor Marian Mrozek 
for encouraging me to undertake the present research, and for numerous 
inspiring discussions.


\end{document}